 \newtheorem{cor}[theorem]{Corollary}
 \newtheorem{prop}[theorem]{Proposition}
 \newtheorem{assu}[theorem]{Assumption}
 \newtheorem{defn}[theorem]{Definition}
 \newtheorem{example}[theorem]{Example}
 \newtheorem{rem}[theorem]{Remark}
 \numberwithin{equation}{section}
\newcommand{\beq}{\begin{equation}}
\newcommand{\eeq}{\end{equation}}
\newcommand{\bB}{\mathbb{B}}
\newcommand{\bR}{\mathbb{R}}
\newcommand{\bN}{\mathbb{N}}
\newcommand{\R}{\bR}
\newcommand{\Rn}{\bR^n}
\newcommand{\Rm}{\bR^m}
\newcommand{\Rmi}{\bR^{m_i}}
\newcommand{\Rmn}{\bR^{m\times n}}
\newcommand{\Rnn}{\bR^{n\times n}}
\newcommand{\eR}{{\bar\bR}}
\newcommand{\cA}{\mathcal{A}}
\newcommand{\cG}{\mathcal{G}}
\newcommand{\cL}{\mathcal{L}}
\newcommand{\cI}{\mathcal{I}}
\newcommand{\epi}[1]{\mathrm{epi}(#1)}
\newcommand{\argmin}{\mathop{\mathrm{argmin}}}
\newcommand{\half}{\frac{1}{2}}
\newcommand{\thalf}{\tfrac{1}{2}}
\newcommand{\norm}[1]{\left\Vert #1\right\Vert}
\newcommand{\tnorm}[1]{\norm{#1}_2}
\newcommand{\onorm}[1]{\norm{#1}_1}
\newcommand{\dnorm}[1]{\norm{#1}_*}
\newcommand{\ip}[2]{\left\langle #1,\, #2\right\rangle}
\newcommand{\set}[2]{\left\{#1\,\left|\, #2\right.\right\}}
\newcommand{\map}[3]{#1:\, #2\rightarrow #3}
\newcommand{\dom}[1]{\mathrm{dom}\left(#1\right)}
\newcommand{\dist}[2]{\mathrm{dist}\left(#1\,\left|\, #2\right.\right)}
\newcommand{\odist}[2]{\mathrm{dist}_1\left(#1\,\left|\, #2\right.\right)}
\newcommand{\tdist}[2]{\mathrm{dist}_2\left(#1\,\left|\, #2\right.\right)}
\newcommand{\sdist}[2]{\mathrm{dist}_2^2\left(#1\,\left|\, #2\right.\right)}
\newcommand{\support}[2]{\delta^*\left(#1\,\left|\, #2\right.\right)}
\newcommand{\indicator}[2]{\delta\left(#1\,\left|\, #2\right.\right)}
\newcommand{\ncone}[2]{N\left(#1\, |\, #2\right)}
\newcommand{\sd}{\partial}
\newcommand{\lam}{\lambda}
\newcommand{\eps}{\epsilon}
\newcommand{\veps}{\varepsilon}
\newcommand{\tveps}{{\tilde\veps}}
\newcommand{\sig}{\sigma}
\newcommand{\alf}{\alpha}
\newcommand{\del}{\delta}
\newcommand{\by}{\bar y}
\newcommand{\bp}{\bar p}
\newcommand{\br}{\bar r}
\newcommand{\bz}{\bar z}
\newcommand{\bx}{\bar x}
\newcommand{\bk}{{\bar k}}
\newcommand{\bu}{\bar u}
\newcommand{\bw}{\bar w}
\newcommand{\beps}{{\bar\eps}}
\newcommand{\tx}{{\tilde x}}
\newcommand{\tr}{{\tilde r}}
\newcommand{\tu}{{\tilde u}}
\newcommand{\tw}{{\tilde w}}
\newcommand{\tW}{{\tilde W}}
\newcommand{\tA}{{\tilde A}}
\newcommand{\tS}{{\tilde S}}
\newcommand{\tJ}{{\tilde J}}
\newcommand{\teps}{{\tilde \eps}}
\newcommand{\hx}{{\hat x}}
\newcommand{\hb}{{\hat b}}
\newcommand{\hA}{{\hat A}}
\newcommand{\hG}{{\hat G}}
\newcommand{\hv}{{\hat v}}
\newcommand{\hu}{{\hat u}}
\newcommand{\hr}{{\hat r}}
\newcommand{\hS}{{\hat S}}
\newcommand{\hJ}{{\hat J}}
\newcommand{\heps}{{\hat \eps}}
\newcommand{\hmu}{{\hat\mu}}
\newcommand{\om}{\omega}
\newcommand{\Pri}{P_{C_i}}
\newcommand{\qri}{q_i}
\newcommand{\wri}{\tw_i}
\title    {Matrix-Free Solvers for Exact Penalty Subproblems}
\author   {James V. Burke\thanks{Dept. of Mathematics, University of Washington, Seattle, WA 98195, USA.}
      \and Frank E. Curtis\thanks{Dept. of Industrial and Systems Engineering, Lehigh University, Bethlehem, PA 18018, USA.}
      \and Hao Wang\thanks{Dept. of Industrial and Systems Engineering, Lehigh University, Bethlehem, PA 18018, USA.}
      \and Jiashan Wang\thanks{Dept. of Mathematics, University of Washington, Seattle, WA 98195, USA.}
          }
\date     {\today}
\begin{document}

\maketitle

\begin{abstract}
  We present two matrix-free methods for approximately solving exact penalty subproblems that arise when solving large-scale optimization problems.  The first approach is a novel iterative re-weighting algorithm (IRWA), which iteratively minimizes quadratic models of relaxed subproblems while automatically updating a relaxation vector.  The second approach is based on alternating direction augmented Lagrangian (ADAL) technology applied to our setting.  The main computational costs of each algorithm are the repeated minimizations of convex quadratic functions which can be performed matrix-free.   We prove that both algorithms are globally convergent under loose assumptions, and that each requires at most $O(1/\varepsilon^2)$ iterations to reach $\varepsilon$-optimality of the objective function.  
  Numerical experiments exhibit the ability of both algorithms to efficiently find inexact solutions.  Moreover, in certain cases, IRWA is shown to be more reliable than ADAL.
\end{abstract}

\begin{keywords}
  convex composite optimization, nonlinear optimization, exact penalty methods, iterative re-weighting methods, augmented Lagrangian methods, alternating direction methods
\end{keywords}

\begin{AMS}
  49M20, 49M29, 49M37, 65K05, 65K10, 90C06, 90C20, 90C25
\end{AMS}

\section{Introduction}

The prototypical convex composite optimization problem is
\begin{equation}\label{basic cc problem}
  \min_{x\in X}\ f(x)+\dist{F(x)}{C},
\end{equation}
where the sets $X\subset\Rn$ and $C\subset\Rm$ are non-empty, closed, and convex, the functions $\map{f}{\Rn}{\R}$ and $\map{F}{\Rn}{\Rm}$ are smooth, and the distance function is defined as
\[
  \dist{y}{C} := \inf_{z\in C}\ \norm{y-z},
\]
with $\norm{\cdot}$ a given norm on $\Rm$ \cite{Burke85,Fletcher82,Powell83}.  The objective in problem \eqref{basic cc problem} is an exact penalty function for the optimization problem
\[
  \min_{x\in X}\ f(x)\ \mbox{subject to}\ F(x) \in C,
\]
where the penalty parameter has been absorbed into the distance function.  Problem \eqref{basic cc problem} is also useful in the study of feasibility problems where one takes $f\equiv 0$.

Problems of the form \eqref{basic cc problem} and algorithms for solving them have received a great deal of study over the last 30 years \cite{AnOs77,Flet87,RTRW}.  The typical approach for solving such problems is to apply a Gauss-Newton strategy to either define a direction-finding subproblem paired with a line search, or a trust-region subproblem to define a step to a new point \cite{Burke85,Powell83}.  This paper concerns the design, analysis, and implementation of methods for approximately solving the subproblems in either type of approach in large-scale settings.  These subproblems take the form
\begin{equation}\label{g-dist}
  \min_{x\in X}\ g^Tx + \thalf x^THx+\dist{Ax+b}{C},
\end{equation}
where $g\in\Rn$, $H\in\Rnn$ is symmetric, $A\in\Rmn$, $ b\in\Rm$, and $X\subset\Rn$ and $C\subset\Rm$ may be modified versions of the corresponding sets in \eqref{basic cc problem}.  In particular, the set $X$ may now include the addition of a trust-region constraint.  In practice, the matrix $H$ is an approximation to the Hessian of the Lagrangian for the problem \eqref{basic cc problem} 
\cite{Burke87,Fletcher82,Powell83}, 
and so may be indefinite depending on how it is formed.  
However, in this paper, we assume that it is positive semi-definite so that subproblem \eqref{g-dist} is convex.

To solve large-scale instances of \eqref{g-dist}, we develop two solution methods based on linear least-squares subproblems. These solution methods are matrix-free in the sense that the least-squares subproblems can be solved in a matrix-free manner.  The first approach is a novel iterative re-weighting strategy \cite{BeTu74,Lea90,Os85,Schlos73,WoS88}, while the second is based on ADAL technology \cite{BPCPE11,Eck89,Spin85} adapted to this setting.  We prove that both algorithms are globally convergent under loose assumptions, and that each requires at most $O(1/\varepsilon^2)$ iterations to reach $\varepsilon$-optimality of the objective of \eqref{g-dist}.  We conclude with numerical experiments that compare these two approaches.

As a first refinement, we suppose that $C$ has the product space structure
\begin{equation}\label{prod C}
  C := C_1 \times \cdots \times C_l,
\end{equation}
where, for each $i \in \cI := \{1,2,\dots,l\}$, the set $C_i\subset\Rmi$ is convex and $\sum_{i\in\cI} m_i = m$.  Conformally decomposing $A$ and $b$, we write
\[
  A=\begin{bmatrix}A_1\\ \vdots\\ A_l\end{bmatrix}
  \quad\mbox{and}\quad
  b=\begin{pmatrix}b_1\\ \vdots\\ b_l\end{pmatrix},
\]
where, for each $i\in\cI$, we have $A_i\in\R^{m_i\times n}$ and $b_i\in\Rmi$.  On the product space $\R^{m_1}\times \cdots\times \R^{m_l}$, we define a norm adapted to this structure as
\begin{equation}\label{new norm}
  \norm{(y^T_1,y^T_2,\dots, y^T_l)^T}:=\sum_{i\in\cI}\tnorm{y_i}.
\end{equation}
It is easily verified that the corresponding dual norm is
\[
\dnorm{y}=\sup_{i\in\cI}\tnorm{y_i}.
\]
With this notation, we may write
\begin{equation}\label{prod dist}
  \dist{y}{C} = \sum_{i\in\cI}\tdist{y_i}{C_i},
\end{equation}
where, for any set $S$, we define the distance function $\tdist{y}{S}:=\inf_{z\in S}\tnorm{y-z}$.  Hence, with $\varphi(x) := g^Tx + \thalf x^THx$, subproblem \eqref{g-dist} takes the form
\begin{equation}\label{s-dist}
  \min_{x\in X}\ J_0(x),\quad \mbox{where}\quad J_0(x) := \varphi(x)+\sum_{i\in\cI}\tdist{A_ix+b_i}{C_i}.
\end{equation}
Throughout our algorithm development and analysis, it is important to keep in mind that $\norm{y}\ne\tnorm{y}$ since we make heavy use of {\it both} of these norms.

\begin{example}[Intersections of Convex Sets]
  In many applications, the affine constraint has the representation $\hA x+\hb\in \hat{C}:=\bigcap_{i\in\cI} C_i$, where $C_i\subset\R^{m_i}$ is non-empty, closed, and convex for each $i\in\cI$.  Problems of this type are easily modeled in our framework by setting $A_i:=\hA$ and $b_i:=\hb$ for each $i\in\cI$, and $C:=C_1\times\cdots\times C_l$.
\end{example}

\subsection{Notation}

Much of the notation that we use is standard and based on that employed in \cite{RTRW}.  For convenience, we review some of this notation here.  The set $\Rn$ is the real $n$-dimensional Euclidean space with $\Rn_+$ being the positive orthant in $\Rn$ and $\Rn_{++}$ the interior of $\Rn_+$.  The set of real $m\times n$ matrices will be denoted as $\Rmn$.  The Euclidean norm on $\Rn$ is denoted $\tnorm{\cdot}$, and its closed unit ball is $\bB_2:=\set{x}{\tnorm{x}\le1}$.  The closed unit ball of the norm defined in \eqref{new norm} will be denoted by $\bB$.  Vectors in $\Rn$ will be considered as column vectors and so we can write the standard inner product on $\Rn$ as $\ip{u}{v}:=u^Tv$ for all $\{u,v\}\subset\Rn$.  The set $\bN$ is the set of natural numbers $\{1,2,\dots\}$. Given $\{u,v\}\subset\Rn$, the line segment connecting them is denoted by $[u,v]$. Given a set $X\subset\Rn$, we define the convex indicator for $X$ by
\[
  \indicator{x}{X}:=
  \begin{cases}
    0&\mbox{if $x\in X$,}\\
    +\infty&\mbox{if $x\notin X$,}
  \end{cases}
\]
and its support function by
\[
  \support{y}{X}:=\sup_{x\in X}\ip{y}{x}.
\]
A function $\map{f}{\Rn}{\eR:=\R\cup\{+\infty\}}$ is said to be convex if its epigraph,
\[
  \epi{f}:=\set{(x,\mu)}{f(x)\le\mu},
\]
is a convex set. The function $f$ is said to be closed (or lower semi-continuous) if $\epi{f}$ is closed, and $f$ is said to be proper if $f(x)>-\infty$ for all $x\in\Rn$ and $\dom{f}:=\set{x}{f(x)<\infty}\ne\emptyset$.  If $f$ is convex, then the subdifferential of $f$ at $\bx$ is given by 
\[
  \sd f(\bx):=\set{z}{f(\bx)+\ip{z}{x-\bx}\le f(x)\ \forall\, x\in\Rn}.
\]
Given a closed convex $X \subset \Rn$, the normal cone to $X$ at a point $\bx\in X$ is given by
\[
  \ncone{\bx}{X}:=\set{z}{\ip{z}{x-\bx}\le 0\ \forall\, x\in X}.
\]
It is well known that $\ncone{\bx}{X}=\sd \indicator{\bx}{X}$; e.g., see \cite{RTRW}.  Given a set $S\subset\Rm$ and a matrix $M\in\Rmn$, the inverse image of $S$ under $M$ is given by
\[
  M^{-1}S:=\set{x}{Mx\in S}.
\]

Since the set $C$ in \eqref{prod C} is non-empty, closed, and convex, the distance function $\dist{y}{C}$ is convex. Using the techniques of \cite{RTRW}, it is easily shown that the subdifferential of the distance function \eqref{prod dist} is
\begin{equation}\label{sd prod dist}
  \sd \dist{p}{C}=\sd \tdist{p_1}{C_1}\times\dots\times\sd\tdist{p_l}{C_l},
\end{equation}
where, for each $i\in\cI$, we have
\begin{equation}\label{sd prod dist detail}
  \sd\tdist{p_i}{C_i}=
  \begin{cases}
    \frac{(I - P_{C_i})p_i}{\tnorm{(I - P_{C_i})p_i}}&\mbox{if $i \not \in \cA(p)$,}\\
    \bB_2 \cap \ncone{p_i}{C_i}&\mbox{if $i \in \cA(p)$.}
  \end{cases}
\end{equation}
Here, we have defined 
\[
  \cA(p):=\set{i\in\cI}{\tdist{p_i}{C_i}=0}\ \ \forall\, p\in\Rm,
\]
and let $P_C(p)$ denote the projection of $p$ onto the set $C$ (see Theorem \ref{projection theorem}).

Since we will be working on the product space $\R^{m_1}\times\dots\times\R^{m_l}$, we will need notation for the components of the vectors in this space.  Given a vector $w\in \R^{m_1}\times\dots\times\R^{m_l}$, we denote the components in $\R^{m_i}$ by $w_i$ and the $j$th component of $w_i$ by $w_{ij}$ for $j=1,\dots,m_i$ and $i\in\cI$ so that $w=(w_1^T,\dots,w_l^T)^T$.  Correspondingly, given vectors $w_i\in\R^{m_i}$ for $i\in\cI$, we denote by $w\in\Rm$ the vector $w=(w_1^T,\dots,w_l^T)^T$.

\section{An Iterative Re-weighting Algorithm}\label{sec.irwa}

We now describe an iterative algorithm for minimizing the function $J_0$ in \eqref{s-dist}, where in each iteration one solves a subproblem whose objective is the sum of $\varphi$ and a weighted linear least-squares term.  An advantage of this approach is that the subproblems can be solved using matrix-free methods, e.g., the conjugate gradient (CG), projected gradient, and Lanczos \cite{GLRT99} methods.  The objectives of the subproblems are localized approximations to $J_0$ based on projections.  In this manner, we will make use of the following theorem.

\begin{theorem}\cite{Z71}\label{projection theorem}
  Let $C\subset\Rm$ be non-empty, closed, and convex.  Then, to every $y\in \Rm$, there is a unique $\by\in C$ such that
  \[
    \tnorm{y-\by}=\tdist{y}{C}.
  \]
  We call $\by = P_C(y)$ the projection of $y$ onto $C$.  Moreover, the following hold:
  \begin{enumerate}
    \item $\by=P_C(y)$ if and only if $\by\in C$ and $(y-\by)\in\ncone{\by}{C}$ \cite{RTR}.
    \item For all $\{y,z\}\subset\Rm$, the operator $P_C$ yields
    \[
      \tnorm{P_C(y) - P_C(z)}^2+\tnorm{(I-P_C)y - (I-P_C)z}^2\le \tnorm{y-z}^2.
    \]
  \end{enumerate}
\end{theorem}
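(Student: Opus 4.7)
The plan is to establish the three claims (existence/uniqueness, the variational characterization, and the firm nonexpansiveness inequality) in that order, since the latter parts rely on the former. All three are classical and rest on the Hilbert-space parallelogram identity together with elementary first-order optimality reasoning.

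For existence and uniqueness, I would take a minimizing sequence $\{y_k\}\subset C$ with $\tnorm{y-y_k}\to \tdist{y}{C}=:d$. Since $C$ is convex, $\tfrac12(y_j+y_k)\in C$, so $\tnorm{y-\tfrac12(y_j+y_k)}\ge d$. Applying the parallelogram identity to the vectors $y-y_j$ and $y-y_k$ yields
\[
  \thalf\tnorm{y_j-y_k}^2 = \tnorm{y-y_j}^2+\tnorm{y-y_k}^2-2\tnorm{y-\thalf(y_j+y_k)}^2 \le \tnorm{y-y_j}^2+\tnorm{y-y_k}^2-2d^2,
\]
and the right-hand side tends to $0$, so $\{y_k\}$ is Cauchy. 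Its limit $\by$ lies in $C$ because $C$ is closed, and by continuity $\tnorm{y-\by}=d$. Uniqueness follows from the same identity: if two minimizers existed, taking them as $y_j,y_k$ in the display forces $\tnorm{y_j-y_k}^2\le 0$.

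For part 1, observe that $\by=P_C(y)$ if and only if $\by$ minimizes the smooth convex function $x\mapsto \thalf\tnorm{y-x}^2$ over $C$. The standard first-order condition for a smooth convex minimization over a convex set reads $-\grad\bigl(\thalf\tnorm{y-\cdot}^2\bigr)(\by)\in\ncone{\by}{C}$; since the gradient at $\by$ equals $\by-y$, this is precisely $(y-\by)\in\ncone{\by}{C}$, as required.

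For part 2 (firm nonexpansiveness), apply part 1 at both $y$ and $z$: setting $\by=P_C(y)$ and $\bz=P_C(z)$, the normal cone characterizations give $\ip{y-\by}{\bz-\by}\le 0$ and $\ip{z-\bz}{\by-\bz}\le 0$. Adding these and rearranging yields
\[
  \ip{\by-\bz}{y-z}\ \ge\ \tnorm{\by-\bz}^2.
\]
Writing $y-z=(\by-\bz)+\bigl((I-P_C)y-(I-P_C)z\bigr)$ and expanding $\tnorm{y-z}^2$ then gives
\[
  \tnorm{y-z}^2 = \tnorm{\by-\bz}^2 + 2\ip{\by-\bz}{(I-P_C)y-(I-P_C)z} + \tnorm{(I-P_C)y-(I-P_C)z}^2,
\]
and the middle term is nonnegative by the inequality just derived (since $\ip{\by-\bz}{(I-P_C)y-(I-P_C)z}=\ip{\by-\bz}{y-z}-\tnorm{\by-\bz}^2\ge 0$), proving the claim.

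The only step requiring any care is existence: one must invoke completeness via the parallelogram-law Cauchy argument rather than relying on continuity/compactness, since $C$ need not be bounded. Everything else is a direct consequence of first-order optimality for a smooth convex objective on a convex set.
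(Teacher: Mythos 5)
Your proof is correct, and it is the standard classical argument (parallelogram-law Cauchy sequence for existence and uniqueness, the variational inequality $\ip{y-\by}{x-\by}\le 0$ for part 1, and addition of the two obtuse-angle inequalities for firm nonexpansiveness in part 2). The paper itself states this theorem without proof, citing it from the literature, so there is no alternative route to compare against; your argument fills that gap correctly and completely.
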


Since $H$ is symmetric and positive semi-definite, there exists $A_0\in\R^{m_0\times n}$, where $m_0:=\text{rank}(H)$, such that $H=A_0^TA_0$.  We use this representation for $H$ in order to simplify our mathematical presentation; this factorization is not required in order to implement our methods.  Define $b_0:=0\in\Rn$, $C_0:=\{0\}\subset\Rn$, and $\cI_0 := \{0\} \cup \cI = \{0,1,\dots,l\}$.  Using this notation, we define our local approximation to $J_0$ at a given point $\tx$ and with a given \emph{relaxation vector} $\eps\in\R^l_{++}$ by
\[
  \hat G_{(\tx,\eps)}(x) := g^Tx + \thalf \sum_{i\in\cI_0} w_i(\tx,\eps)\tnorm{A_i x +b_i - \Pri(A_i\tx +b_i)}^2,
\]
where, for any $x\in\Rn$, we define
\begin{align}\label{w}
  w_0(x,\eps) :=1,\quad
  w_i(x,\eps)&:=\left(\mathrm{dist}_2^2(A_ix+b_i\mid C_i)+\eps_i^2\right)^{-1/2}\quad \forall\, i\in\cI,\\ \nonumber
  \mbox{and}\quad W(x,\eps)  &:=\mathrm{diag}(w_0(x,\eps)I_{m_0},\dots,w_l(x,\eps) I_{m_l}).
\end{align}
Define
\begin{equation}\label{tA}
  \tA:= \begin{bmatrix} A_0 \\ A \end{bmatrix}.
\end{equation}

We now state the algorithm.
\medskip

\underline{\bf{Iterative Re-Weighting Algorithm (IRWA)}}
\begin{enumerate}
  \item[Step 0:] (Initialization)
  Choose an initial point $x^0\in X$, an initial relaxation vector $\eps^0 \in\bR^l_{++}$, and scaling parameters $\eta\in(0,1)$, $\gamma >0$, and $M>0$.  Let $\sigma\geq0$ and $\sigma'\geq0$ be two scalars which serve as termination tolerances for the stepsize and relaxation parameter, respectively.  Set $k:=0$.
  \item[Step 1:] (Solve the re-weighted subproblem for $x^{k+1}$) \\
  \noindent
  Compute a solution $x^{k+1}$ to the problem
  \begin{equation}\label{Gcal}
    \cG{(x^k,\eps^k)}:\quad \min_{x\in X} \hat G_{(x^k,\eps^k)}(x).
  \end{equation}
  \item[Step 2:] (Set the new relaxation vector $\eps^{k+1}$)\\
  \noindent
  Set 
  \[
    q_i^k:=A_i (x^{k+1} - x^k)\ \ \mbox{and}\ \ r_i^k:=(I-\Pri)(A_i x^k + b_i)\quad \forall\, i\in\cI_0. 
  \]
  If  
  \begin{equation}\label{step 2 inequality}
    \tnorm{q_i^k} \leq  M\Big[ \tnorm{r_i^k}^2 + (\eps_i^k)^2\Big]^{\frac{1}{2}+\gamma}\quad \forall\, i \in \cI, 
  \end{equation}
  then choose $\eps^{k+1} \in(0,\ \eta \eps^k]$; else, set $\eps^{k+1} := \eps^k$. 
  \item[Step 3:] (Check stopping criteria)\\
  \noindent
  If $\norm{x^{k+1} - x^k}_2 \leq \sigma$ and $\norm{\eps^{k}}_2 \leq \sigma'$, then stop; else, set $k := k+1$ and go to Step 1. 
\end{enumerate}
\medskip

\begin{rem}
  In cases where $C_i=\{0\}\subset\R$ for all $i\in\cI$ and $\phi\equiv 0$, this algorithm has a long history in the literature. Two early references are \cite{BeTu74} and \cite{Schlos73}. In such cases, the algorithm reduces to the classical algorithm for minimizing $\onorm{Ax+b}$ using iteratively re-weighted least-squares.
\end{rem}
\begin{rem}
  If there exists $z_0$ such that $A_0^Tz_0=g$, then, by setting $b_0:=z_0$, the linear term $g^Tx$ 
  can be eliminated in the definition of $\hG$.
\end{rem}
\begin{rem}\label{rem duality gap}
  It is often advantageous to employ a stopping criteria based on a percent reduction in the duality gap rather than the stopping criteria given in Step 3 above \cite{Burke85,Bur87b}.  
  In such cases, one keeps track of both the primal objective values $J_0^k:=J_0(x^k)$ and the dual objective values
  \[
    \hJ_0^k:=\thalf(g+A^T\tu^k)^TH^{-1}(g+A^T\tu^k)-b^T\tu^k+\sum_{i\in\cI}\support{\tu^k_i}{C_i},
  \]
  where the vectors $\tu^k:=W_kr^k$ are dual feasible (see \eqref{dual problem} for a discussion of the dual problem).  Given $\sigma\in(0,1)$, Step 3 above can be replaced by
  \begin{enumerate}
    \item[Step 3':] (Check stopping criteria)\\
    If $(J_0^1+\hJ_0^k) \leq \sigma(J_0^1-J_0^k)$, then stop; else, set $k := k+1$ and go to Step 1.
  \end{enumerate}
  \noindent
  This is the stopping criteria employed in some of our numerical experiments.  Nonetheless, for our analysis, we employ Step 3 as it is stated in the formal description of IRWA for those instances when dual values $\hJ_0^k$ are unavailable, such as when these computations are costly or subject to error.
\end{rem}

\subsection{Smooth approximation to $J_0$}

Our analysis of IRWA is based on a smooth approximation to $J_0$.  Given $\eps\in\R^l_+$, define the $\eps$-smoothing of $J_0$ by
\begin{equation}\label{J eps}
  J(x,\eps):=\varphi(x)+\sum_{i\in\cI}\sqrt{\mathrm{dist}_2^2(A_ix+b_i\mid C_i)+\eps_i^2}\ .
\end{equation}
Note that $J_0(x) \equiv J(x,0)$ and that $J(x,\eps)$ is jointly convex in $(x,\eps)$ since
\[
  J(x,\eps)=\varphi(x)+\sum_{i\in\cI}\tdist{\begin{bmatrix}A_i& 0\\ 0&e_i^T\end{bmatrix} \begin{pmatrix}x\\ \eps\end{pmatrix}+\begin{pmatrix}b_i\\     0\end{pmatrix}}{C_i\times \{0\}},
\]
where $e_i$ is the $i$th unit coordinate vector.  By \cite[Corollary 10.11]{RW98}, \eqref{sd prod dist}, and \eqref{sd prod dist detail},
\begin{gather}\label{partial sd for J}
  \sd J_0(x)=\sd_x J(x,0)=\nabla \varphi(x)+ A^T\sd \dist{\cdot}{C}(Ax+b)= \\
  \nabla \varphi(x)+\sum_{i \not \in \mathcal{A}(Ax+b)} A_i^T \frac{(I - P_{C_i})(A_i x + b_i)}{\tnorm{(I - P_{C_i})(A_i x + b_i)}} + \sum_{i \in \mathcal{A}(Ax+b)} A_i^T ( \bB_2 \cap \ncone{A_i x +b_i}{C_i}). \nonumber
\end{gather}
Given $\tx\in\Rn$ and $\teps\in\R^l_{++}$, we define a weighted approximation to $J(\cdot,\teps)$ at $\tx$ by
\[
  G_{(\tx,\teps)}(x) := g^Tx + \thalf \sum_{i\in\cI_0} w_i(\tx,\teps)\sdist{A_ix +b_i}{C_i}.
\]

We have the following fundamental fact about solutions of $\cG(\tx,\teps)$ defined by \eqref{Gcal}.

\begin{lemma}\label{Gfun}
  Let $\tx\in X$, $\teps\in\R^l_{++}$, $\heps \in (0,\teps]$, and $\hx \in \argmin_{x\in X} \hat G_{(\tx,\teps)}(x)$.  Set
$\wri := w_i(\tx,\teps)$ and $\qri:=A_i(\hx-\tx)$ for $i\in\cI_0$, $\tW:=W(\tx,\teps)$, and $q:=(q_0^T,\dots,q_l^T)^T$.  Then,
  \begin{equation}\label{G descent}
    G_{(\tx,\teps)}(\hx) -  G_{(\tx ,\teps)}(\tx ) \leq -\thalf q^T\tW q
\end{equation}
and
\begin{equation}\label{J descent}
  J(\tx,\teps) - J(\hx,\heps) \geq \thalf q^T \tW q.
\end{equation}
\end{lemma}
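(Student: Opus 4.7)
The plan is to derive both inequalities by exploiting the fact that $\hat G_{(\tx,\teps)}$ is a convex quadratic majorant of $G_{(\tx,\teps)}$, while $G_{(\tx,\teps)}$ is itself (up to an additive constant) a quadratic majorant of $J(\cdot,\teps)$ at $\tx$ obtained via the concavity of the square root.

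For \eqref{G descent}, I would first observe that because $P_{C_i}(A_i\tx+b_i)\in C_i$, one has
\[
\sdist{A_ix+b_i}{C_i}\le \tnorm{A_ix+b_i-P_{C_i}(A_i\tx+b_i)}^2\qquad\forall\, i\in\cI_0,
\]
with equality when $x=\tx$ (the $i=0$ case is an identity since $C_0=\{0\}$). Multiplying by $\thalf w_i(\tx,\teps)$, summing, and adding $g^Tx$ gives $G_{(\tx,\teps)}(x)\le \hat G_{(\tx,\teps)}(x)$ for all $x$, with equality at $x=\tx$. Now $\hat G_{(\tx,\teps)}$ is a convex quadratic with Hessian $\tA^T\tW\tA$; since $\hx$ is its minimizer over the convex set $X$, the first-order optimality condition $\nabla\hat G_{(\tx,\teps)}(\hx)^T(\tx-\hx)\ge 0$ combined with the usual quadratic identity yields
\[
\hat G_{(\tx,\teps)}(\hx)-\hat G_{(\tx,\teps)}(\tx)\le -\thalf(\hx-\tx)^T\tA^T\tW\tA(\hx-\tx)=-\thalf q^T\tW q.
\]
Chaining this with the majorization gives $G_{(\tx,\teps)}(\hx)-G_{(\tx,\teps)}(\tx)\le \hat G_{(\tx,\teps)}(\hx)-\hat G_{(\tx,\teps)}(\tx)\le -\thalf q^T\tW q$, which is \eqref{G descent}.

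For \eqref{J descent}, the key observation is that for fixed $\eps>0$ the map $s\mapsto\sqrt{s+\eps^2}$ is concave on $[0,\infty)$, so
\[
\sqrt{s+\eps^2}\le \sqrt{s_0+\eps^2}+\tfrac{1}{2\sqrt{s_0+\eps^2}}(s-s_0).
\]
Apply this with $s=\sdist{A_ix+b_i}{C_i}$, $s_0=\sdist{A_i\tx+b_i}{C_i}$, and $\eps=\teps_i$, note that $(s_0+\teps_i^2)^{-1/2}=w_i(\tx,\teps)$, and sum over $i\in\cI$. After adding $\varphi(x)$ to both sides, the result rearranges to the quadratic upper bound
\[
J(x,\teps)\le J(\tx,\teps)+G_{(\tx,\teps)}(x)-G_{(\tx,\teps)}(\tx).
\]

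To finish, set $x=\hx$ in this majorization and combine with \eqref{G descent} to obtain $J(\tx,\teps)-J(\hx,\teps)\ge G_{(\tx,\teps)}(\tx)-G_{(\tx,\teps)}(\hx)\ge \thalf q^T\tW q$. Since $\heps\in(0,\teps]$ and $J(x,\cdot)$ is componentwise non-decreasing in $\eps$ (each summand $\sqrt{\sdist{A_ix+b_i}{C_i}+\eps_i^2}$ is increasing in $\eps_i^2$), we have $J(\hx,\heps)\le J(\hx,\teps)$, which upgrades the previous bound to \eqref{J descent}. The main conceptual step is recognizing the two-layer majorization ($\hat G\ge G$ and $G$ majorizing $J(\cdot,\teps)$ via concavity of the square root); once that structure is in place, everything else is routine manipulation of convex quadratics and first-order optimality.
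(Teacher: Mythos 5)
Your proof is correct and follows essentially the same route as the paper: the projection inequality $\sdist{A_ix+b_i}{C_i}\le\tnorm{A_ix+b_i-P_{C_i}(A_i\tx+b_i)}^2$ you use to show $G_{(\tx,\teps)}\le\hat G_{(\tx,\teps)}$ (with equality at $\tx$) is exactly the paper's inequality \eqref{proj 1}, and your quadratic-expansion-plus-first-order-optimality step is the paper's normal-cone computation \eqref{normal eq}, just repackaged as an explicit majorization argument. The proof of \eqref{J descent} via concavity of the square root and monotonicity in $\eps$ is identical to the paper's.
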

\begin{proof}
  We first prove \eqref{G descent}.  Define $\hr_i:=(I- P_{C_i})(A_i \hx + b_i)$ and $\tr_i:=(I- P_{C_i})(A_i \tx + b_i)$ for $i\in\cI_0$, and set $\hr:=(\hr_0^T,\dots,\hr_l^T)^T$ and $\tr:=(\tr_0^T,\dots,\tr_l^T)^T$.  Since $\hx \in \argmin_{x\in X} \hat G_{(\tx,\teps)}(x)$, there exists $\hv\in\ncone{\hx}{X}$ such that
  \[
    0 = g + \tA^T\tW(\tA\hx+b-P_C(\tA\tx+b))+\hv=g+\tA^T\tW(q+\tr)+\hv,
  \]
  or, equivalently,
  \begin{equation}\label{normal eq}
    -\hv=g+\tA^T\tW(q+\tr).
  \end{equation}
  Moreover, by the definition of the projection operator $\Pri$, we know that
  \[
  \tnorm{\hr_i}=\tnorm{(I - P_{C_i})(A_i \hx  + b_i)} \le \tnorm{A_i\hx  + b_i - P_{C_i}(A_i \tx + b_i)} = \tnorm{q_i+\tr_i}
  \]
  so that
  \begin{equation}\label{proj 1}
    \tnorm{\hr_i}^2-\tnorm{q_i+\tr_i}^2\le 0\quad \forall\, i \in \cI_0.
  \end{equation}
  Therefore,
  \[
  \begin{aligned}
     &\ G_{(\tx ,\teps)}(\hx ) -  G_{(\tx ,\teps)}(\tx )  \\
    =&\ g^T(\hx - \tx ) + \thalf \textstyle\sum_{i\in\cI_0} \tw_i [\tnorm{\hr_i}^2 - \tnorm{\tr_i}^2 ] \\
    =&\ g^T(\hx - \tx ) + \thalf \textstyle\sum_{i\in\cI_0} \tw_i [(\tnorm{\hr_i}^2  - \tnorm{q_i+\tr_i}^2) + (\tnorm{q_i+\tr_i}^2  - \tnorm{\tr_i}^2) ] \\
  \le&\ g^T(\hx - \tx ) + \thalf \textstyle\sum_{i\in\cI_0} \tw_i [\tnorm{q_i+\tr_i}^2  - \tnorm{\tr_i}^2]
     &&(\mbox{by \eqref{proj 1}})\\
    =&\ g^T(\hx - \tx ) + \thalf \textstyle\sum_{i\in\cI_0} \tw_i [\tnorm{q_i}^2+2\ip{q_i}{\tr_i}]\\
    =&\ g^T(\hx - \tx ) + \thalf \textstyle\sum_{i\in\cI_0} \tw_i [-\tnorm{q_i}^2+2\ip{q_i}{q_i+\tr_i}]\\
    =&\ -\thalf q^T\tW q+g^T(\hx  - \tx )+q^T\tW(q+\tr)\\
    =&\ -\thalf q^T\tW q+(\hx-\tx)^T(g+\tA^T\tW(q+\tr))\\
    =&\ -\thalf q^T\tW q+(\tx-\hx)^T\hv
     &&(\mbox{by \eqref{normal eq}})\\
  \le&\ - \thalf q^T\tW q,
  \end{aligned}
  \]
  where the final inequality follows since $\tx\in X$ and $\hv\in\ncone{\hx}{X}$.
 
  We now prove \eqref{J descent}.  Since $\sqrt{t}$ is a concave function of $t$ on $\R_+$, we have
  \[
    \sqrt{\hat{t}} \leq \sqrt{\tilde{t}} + \frac{\hat{t} - \tilde{t}}{2\sqrt{\tilde{t}}}\quad \forall\, \{\hat{t},\tilde{t}\} \subset \R_{++},
  \]
  and so, for $i\in\cI$, we have
  \begin{align}
       &\ \sqrt{\mathrm{dist}_2^2(A_i\hx+b_i\mid C_i)+\teps_i^2} \nonumber \\
    \le&\ \sqrt{\sdist{A_i\tx +b_i}{C_i} + \teps_i^2}+\frac{\sdist{A_i\hx + b_i }{C_i} -\sdist{A_i\tx +b_i}{C_i}}{2\sqrt{\sdist{A_i\tx +b_i}{C_i} + \teps_i^2}}. \label{ccv}
  \end{align}
  Hence,
  \begin{align*}
    J(\hx,\heps)
      &\le J(\hx,\teps) =\varphi(\hx)+\sum_{i\in\cI}\sqrt{\mathrm{dist}_2^2(A_i\hx+b_i\mid C_i)+\teps_i^2} \\
      &\le J(\tx,\teps) +(\varphi(\hx)-\varphi(\tx))+ \thalf \sum_{i\in\cI} \frac{\sdist{A_i\hx + b_i }{C_i} -\sdist{A_i\tx +b_i}{C_i}}{\sqrt{\sdist{A_i\tx +b_i}{C_i} + \teps_i^2}} \\
      &=   J(\tx,\teps) + [G_{(\tx,\teps)}(\hx) -G_{(\tx,\teps)}(\tx) ]\\
      &\le J(\tx,\teps) - \thalf q^T \tW q,
  \end{align*}
  where the first inequality follows from $\heps \in (0,\teps]$, the second inequality follows from \eqref{ccv}, and the third inequality follows from \eqref{G descent}.
\end{proof}

\subsection{Coercivity of $J$}

Lemma \ref{Gfun} tells us that IRWA is a descent method for the function $J$.  Consequently, both the existence of solutions to \eqref{s-dist} as well as the existence of cluster points to IRWA can be guaranteed by understanding conditions under which the function $J$ is coercive, or equivalently, conditions that guarantee the boundedness of the lower level sets of $J$ over $X$. For this, we need to consider the asymptotic geometry of $J$ and $X$.

\begin{defn}\cite[Definition 3.3]{RW98}
  Given $Y\subset \Rm$, the horizon cone of $Y$ is
  \[
    Y^\infty:=\set{z}{\exists\, t^k\downarrow 0,\ \{y^k\}\subset Y\mbox{ such that }t^ky^k\rightarrow z}.
  \]
\end{defn}

We have the basic facts about horizon cones given in the following proposition.

\begin{prop}\label{hzn basic facts}
  The following hold:
  \begin{enumerate}
    \item The set $Y\subset \Rm$ is bounded if and only if $Y^\infty=\{0\}$.
    \item Given $Y_i\subset\R^{m_i}$ for $i\in\cI$, we have $(Y_1\times \cdots\times Y_l)^\infty=Y_1^\infty\times \cdots\times Y_l^\infty$.
    \item \cite[Theorem 3.6]{RW98}\label{rec C} If $C\subset\Rm$ is non-empty, closed, and convex, then
    \[
      C^\infty=\set{z}{C+z\subset C}.
    \]
  \end{enumerate}
\end{prop}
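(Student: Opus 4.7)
The plan is to treat the three parts separately, noting that part~(3) is quoted verbatim from \cite[Theorem~3.6]{RW98} and so requires no independent argument; only (1) and (2) must be proved.

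For (1), I will work directly from the definition of $Y^\infty$. In the ``only if'' direction, if $Y\subset M\bB_2$ for some $M>0$, then for any $t^k\downarrow 0$ and $\{y^k\}\subset Y$ we have $\tnorm{t^k y^k}\le M t^k\to 0$, so every limit point is $0$ and $Y^\infty=\{0\}$. In the ``if'' direction, assume $Y$ is unbounded, pick $\{y^k\}\subset Y$ with $\tnorm{y^k}\to\infty$, and set $t^k:=1/\tnorm{y^k}$. Then $t^k\downarrow 0$ and $\{t^k y^k\}$ lies on the Euclidean unit sphere, so by compactness a subsequence converges to some $z$ with $\tnorm{z}=1$, exhibiting a nonzero element of $Y^\infty$.

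For (2), the inclusion $(Y_1\times\cdots\times Y_l)^\infty\subseteq Y_1^\infty\times\cdots\times Y_l^\infty$ is immediate from the definition: any witnessing pair $t^k\downarrow 0$, $(y_1^k,\dots,y_l^k)\in Y_1\times\cdots\times Y_l$ with $t^k(y_1^k,\dots,y_l^k)\to(z_1,\dots,z_l)$ restricts componentwise to $t^k y_i^k\to z_i$, so $z_i\in Y_i^\infty$ for each $i$. For the reverse inclusion, given $z_i\in Y_i^\infty$ for every $i$, I start from componentwise witnesses $t_i^k\downarrow 0$ and $y_i^k\in Y_i$ with $t_i^k y_i^k\to z_i$, and plan a diagonal extraction to produce a single stepsize sequence $\tau^k\downarrow 0$ together with tuples $(u_1^k,\dots,u_l^k)\in Y_1\times\cdots\times Y_l$ satisfying $\tau^k u_i^k\to z_i$ simultaneously for all~$i$. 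For indices with $z_i=0$ one simply fixes an element $\bar u_i\in Y_i$ (so $\tau^k\bar u_i\to 0$); for indices with $z_i\ne 0$, one exploits that $\tnorm{y_i^k}\to\infty$ to select elements of $Y_i$ of whatever magnitudes are required to match a uniformly chosen $\tau^k$.

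The hardest step will be precisely this alignment in the reverse direction of~(2): the component sequences carry their own intrinsic scales $t_i^k$ and must be synchronized to a single $\tau^k$ while keeping the approximations intact. The key tool is the unboundedness of $\{y_i^k\}$ whenever $z_i\ne 0$, which provides enough freedom to pass to subsequences so that a common stepsize can be matched across components by appropriate choices from each $Y_i$.
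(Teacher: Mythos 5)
The paper itself offers no proof of this proposition (it is presented as a list of known facts, with part (3) cited to Rockafellar--Wets), so the only question is whether your argument is sound. Parts (1) and the forward inclusion in (2) are fine, but the reverse inclusion in part (2) --- the step you yourself flag as the hardest --- cannot be carried out, because for arbitrary sets $Y_i$ the claimed equality is false. Take $l=2$ with $Y_1=\set{4^k}{k\in\bN}\subset\R$ and $Y_2=\set{2\cdot 4^k}{k\in\bN}\subset\R$. Then $1\in Y_1^\infty$ (take $t^k=4^{-k}$, $y^k=4^k$) and $1\in Y_2^\infty$ (take $t^k=(2\cdot 4^k)^{-1}$), yet $(1,1)\notin (Y_1\times Y_2)^\infty$: if $\tau^k a^k\to 1$ and $\tau^k b^k\to 1$ with $a^k\in Y_1$ and $b^k\in Y_2$, then $a^k/b^k\to 1$, whereas every ratio $a/b$ with $a\in Y_1$, $b\in Y_2$ lies in $\set{4^j/2}{j\in\mathbb{Z}}$, which is bounded away from $1$. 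This pinpoints the flaw in your key tool: unboundedness of $\{y_i^k\}$ guarantees that $Y_i$ contains elements of arbitrarily large magnitude, not elements at every (approximate) magnitude, so the intrinsic scales $t_i^k$ of two components with $z_i\ne 0$ cannot in general be synchronized to a single $\tau^k$.

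The equality does hold in the only setting where the paper uses it, namely when each factor is non-empty, closed, and convex (the sets $C_i$), and there the reverse inclusion should be obtained from part (3) rather than by a sequential construction: $z_i\in C_i^\infty$ for all $i$ iff $C_i+z_i\subset C_i$ for all $i$, iff $(C_1\times\cdots\times C_l)+z\subset C_1\times\cdots\times C_l$, iff $z\in(C_1\times\cdots\times C_l)^\infty$, since the product is again non-empty, closed, and convex. (In fact the paper only ever invokes the inclusion $(C_1\times\cdots\times C_l)^\infty\subset C_1^\infty\times\cdots\times C_l^\infty$, which is the direction you prove correctly.) So either restrict part (2) to non-empty closed convex factors and route the proof through part (3), or weaken the equality to an inclusion; as written, your synchronization step cannot be repaired.
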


We now prove the following result about the lower level sets of $J$.

\begin{theorem}\label{thm:boundedness}
  Let $\alf>0$ and $\eps\in\R^l_+$ be such that the set
  \[
    L(\alf,\eps):=\set{x\in X}{J(x,\eps)\le\alf}
  \] 
  is non-empty.  Then,
  \begin{equation}\label{L hzn}
    L(\alf,\eps)^\infty=\set{\bx\in X^\infty}{g^T\bx\le 0,\ H\bx=0,\ A\bx\in C^\infty}.
  \end{equation}
  Moreover, $L(\alf,\eps)$ is compact for all $(\alf,\eps)\in\R^{l+1}_+$ if and only if
  \begin{equation}\label{boundedness}
    \left[\bx\in X^\infty\cap\ker(H)\cap A^{-1}C^\infty\mbox{ satisfies }g^T\bx\le 0\right]\iff \bx=0.
  \end{equation}
\end{theorem}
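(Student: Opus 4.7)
The plan is to establish the horizon-cone identity \eqref{L hzn} by proving each inclusion separately, and then obtain the compactness characterization directly from Proposition~\ref{hzn basic facts}(i).

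For the inclusion $\supseteq$, fix any $x^0 \in L(\alpha,\eps)$ and, given $\bx$ satisfying the right-hand conditions of \eqref{L hzn}, consider the ray $x^k := x^0 + k\bx$ with $t^k := 1/k$. Proposition~\ref{hzn basic facts}(iii), applied to $X$ and to each $C_i$, delivers $x^k \in X$ and $P_{C_i}(A_i x^0 + b_i) + k A_i \bx \in C_i$, which yields $\tdist{A_i x^k + b_i}{C_i} \le \tdist{A_i x^0 + b_i}{C_i}$. Using $H\bx = 0$ together with the symmetry of $H$, the quadratic terms in the expansion of $\varphi(x^k)$ collapse and $\varphi(x^k) = \varphi(x^0) + k\,g^T\bx \le \varphi(x^0)$. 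Assembling these, $J(x^k,\eps) \le J(x^0,\eps) \le \alpha$, so $x^k \in L(\alpha,\eps)$ and $t^k x^k \to \bx$, placing $\bx \in L(\alpha,\eps)^\infty$.

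For the inclusion $\subseteq$, extract witnesses $t^k \downarrow 0$, $\{x^k\} \subset L(\alpha,\eps)$, $t^k x^k \to \bx$. Closedness of $X$ gives $\bx \in X^\infty$. The crux is the bound $\varphi(x^k) \le \alpha$, which follows from $J(x^k,\eps) \le \alpha$ and non-negativity of the square-root terms, and to which two different scalings will be applied. Multiplying by $(t^k)^2$ sends the cross term $(t^k)^2 g^T x^k = t^k\,g^T(t^k x^k)$ to zero and converts the quadratic piece into $\thalf (t^k x^k)^T H (t^k x^k) \to \thalf \bx^T H \bx$; positive semidefiniteness of $H$ (via the factorization $H = A_0^T A_0$) then forces $H\bx = 0$. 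Since the quadratic term is then nonnegative along the sequence, $g^T x^k \le \alpha$ survives, and multiplying by $t^k$ and passing to the limit gives $g^T \bx \le 0$. For $A\bx \in C^\infty$, set $z_i^k := P_{C_i}(A_i x^k + b_i) \in C_i$ and decompose $t^k z_i^k = t^k(A_i x^k + b_i) - t^k(I - P_{C_i})(A_i x^k + b_i)$; the first term tends to $A_i \bx$, and so to place $A_i \bx \in C_i^\infty$ one must show that the residual norm $t^k\,\tdist{A_i x^k + b_i}{C_i}$ vanishes, using the slack in $J(x^k,\eps)\le\alpha$ together with the already-derived conditions on $\bx$. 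Proposition~\ref{hzn basic facts}(ii) then aggregates $A_i \bx \in C_i^\infty$ across $i$ into $A\bx \in C^\infty$.

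For the compactness equivalence, continuity of $J(\cdot,\eps)$ and closedness of $X$ make $L(\alpha,\eps)$ closed; Proposition~\ref{hzn basic facts}(i) then equates compactness with $L(\alpha,\eps)^\infty = \{0\}$ (the empty case being trivial). Substituting \eqref{L hzn} and noting that its right-hand side does not depend on $(\alpha,\eps)$ collapses the universal quantifier and yields the equivalence with \eqref{boundedness}. The main technical obstacle is the residual decay $t^k\,\tdist{A_i x^k + b_i}{C_i} \to 0$ inside the $\subseteq$ direction: the inequality $\varphi(x^k) \le \alpha$ must be exploited a third time, beyond the two scalings that produced $H\bx = 0$ and $g^T \bx \le 0$, to control the projection residuals, and this is the delicate part of the argument; the rest is standard manipulation.
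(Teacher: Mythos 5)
Your handling of the inclusion $\supseteq$ in \eqref{L hzn}, and of the three conclusions $\bx\in X^\infty$, $g^T\bx\le 0$, $H\bx=0$ in the inclusion $\subseteq$, is correct and essentially identical to the paper's argument (the ray $x^0+\lam\bx$ together with Proposition~\ref{hzn basic facts}(3) in one direction; the two scalings of the level-set inequality in the other). The genuine gap is precisely the step you set aside: the residual decay $t^k\,\tdist{A_ix^k+b_i}{C_i}\to 0$, which is the only place where $A\bx\in C^\infty$ could be established, is never argued --- you only assert that the bound $\varphi(x^k)\le\alf$ ``must be exploited a third time.'' This is not a routine omission a reader can fill in. The paper's own proof closes this step via the chain
\[
  0\ \le\ t^k\dist{Ax^k+b}{C}\ \le\ t^kJ(x^k,\eps)-t^kg^Tx^k\ \le\ t^k\alf-g^T(t^kx^k),
\]
and declares the right-hand side to converge to $0$; but its limit is $-g^T\bx$, which is only known to be nonnegative, so that argument closes only when $g^T\bx=0$.

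In fact the step cannot be repaired, because the inclusion $\subseteq$ in \eqref{L hzn} is false in general. Take $n=m=l=1$, $X=\R$, $C_1=\{0\}$, $H=0$, $g=-1$, $A=1$, $b=0$, so that $J(x,\eps)=-x+\sqrt{x^2+\eps_1^2}$. For any $\alf>0$ one computes $L(\alf,\eps)=[(\eps_1^2-\alf^2)/(2\alf),\,\infty)$, hence $L(\alf,\eps)^\infty=[0,\infty)$, whereas the right-hand side of \eqref{L hzn} equals $\set{\bx}{-\bx\le 0,\ \bx\in\{0\}}=\{0\}$. The same example defeats the ``moreover'' clause: condition \eqref{boundedness} holds, yet $L(\alf,\eps)$ is unbounded. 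What the recession-function calculus actually yields as the right-hand side of \eqref{L hzn} is $\set{\bx\in X^\infty}{H\bx=0,\ g^T\bx+\dist{A\bx}{C^\infty}\le 0}$, which contains the set in \eqref{L hzn} and can be strictly larger; the compactness criterion must be strengthened accordingly. So your instinct that the residual decay is ``the delicate part'' was exactly right --- but flagging it is not proving it, and for the statement as written no proof exists.
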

\begin{proof}
  Let $x\in L(\alf,\eps)$ and let $\bx$ be an element of the set on the right-hand side of \eqref{L hzn}.  Then, by Proposition \ref{hzn basic facts}, for all $\lam \ge 0$ we have $x+\lam \bx\in X$ and $\lam A_i\bx+C_i\subset C_i$ for all $i\in\cI$, and so for each $i\in\cI$ we have
  \begin{eqnarray*}
    \dist{A_i(x+\lam \bx)+b_i}{C_i}
      &\le&\dist{A_i(x+\lam \bx)+b_i}{\lam A_i\bx+C_i}\\
      &=  &\dist{(A_ix+b_i)+\lam A_i\bx}{\lam A_i\bx+C_i}\\
      &=  &\dist{A_ix+b_i}{C_i}.
  \end{eqnarray*}
  Therefore, 
  \begin{eqnarray*}
    J(x+\lam\bx,\eps)
      &=  &\varphi(x)+\lam g^T\bx+\sum_{i\in\cI} \sqrt{\sdist{A_i(x+\lam \bx)+b_i}{C_i}+\eps_i^2} \\ 
      &\le&\varphi(x)+\sum_{i\in\cI} \sqrt{\sdist{A_ix+b_i}{C_i}+\eps_i^2} = J(x,\eps) \le \alf.
  \end{eqnarray*}
  Consequently, $\bx\in L(\alf,\eps)^\infty$.

  On the other hand, let $\bx\in L(\alf,\eps)^\infty$.  We need to show that $\bx$ is an element of the set on the right-hand side of \eqref{L hzn}. For this, we may as well assume that $\bx\ne 0$. By the fact that $\bx \in L(\alf,\eps)^\infty$, there exists $t^k\downarrow 0$ and $\{x^k\}\subset X$ such that $J(x^k,\eps)\le \alf$ and $t^kx^k\rightarrow \bx$.  Consequently, $\bx\in X^\infty$.  Moreover,
  \[
    g^T(t^kx^k)=t^k(g^Tx^k)\le t^kJ(x^k,\eps)\le t^k\alf\rightarrow 0
  \]
  and so
  \begin{align*}
    0 & \le \norm{A_0(t^kx^k)}^2=(t^kx^k)^TH(t^kx^k)=(t^k)^2(x^k)^THx^k \\
      & \le(t^k)^2 2(J(x^k,\eps) - g^Tx^k)\le (t^k)^2 2\alf - t^k 2 g^T(t^kx^k)\rightarrow 0.
  \end{align*}
  Therefore, $g^T\bx\le 0$ and $H\bx =0$.  Now, define $z^k:=P_C(Ax^k+b)$ for $k\in\bN$.  Then, by Theorem~\ref{projection theorem}(2), we have
  \[
    \tnorm{z^k}\le \tnorm{(I-P_C)(Ax^k+b)}+\tnorm{Ax^k+b}\le \alf +\tnorm{Ax^k+b},
  \]
  which, since ${A(t^kx^k)+t^kb}\rightarrow A\bx$, implies that the sequence $\{t^kz^k\}$ is bounded.  Hence, without loss of generality, we can assume that there is a vector $\bz$ such that $t^kz^k\rightarrow \bz$, where by the definition of $z^k$ we have $\bz\in C^\infty$.  But,
  \begin{align*}
    0 \leq \tnorm{A(t^kx^k)+t^kb-(t^kz^k)}
      & =   t^k\tdist{Ax^k+b}{C} \\
      &\le t^kJ(x^k,\eps) - t^k g^Tx^k \le t^k\alf - g^T(t^kx^k)\rightarrow 0,
  \end{align*}
  while
  \[
    \tnorm{A(t^kx^k)+b-(t^kz^k)}\rightarrow\tnorm{A\bx-\bz}.
  \]
  Consequently, $\bx\in X^\infty,\ g^T\bx\le 0,\ H\bx=0$, and $A\bx\in C^\infty$, which together imply that $\bx$ is in the set on the right-hand side of \eqref{L hzn}.
\end{proof}

\begin{cor}\label{cor:compactness}
  Suppose that the sequence $\{(x^k,\eps^k)\}$ is generated by IRWA with initial point $x^0\in X$ and relaxation vector $\epsilon^0 \in \bR^l_{++}$.  Then, $\{x^k\}$ is bounded if \eqref{boundedness} is satisfied, which follows if at least one of the following conditions holds:
  \begin{enumerate}
    \item $X$ is compact.
    \item $H$ is positive definite.
    \item $C$ is compact and $X^\infty\cap \ker(H)\cap\ker(A)=\{0\}$.
  \end{enumerate}
\end{cor}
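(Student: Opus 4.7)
The plan is to reduce the corollary to Theorem~\ref{thm:boundedness} by exhibiting a fixed sublevel set of $J(\cdot,\cdot)$ containing the entire iterate sequence, then checking that each of the three stated conditions implies \eqref{boundedness}.

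First I would establish monotonicity. Lemma~\ref{Gfun}, applied at each iteration with $\tx=x^k$, $\teps=\eps^k$, $\hx=x^{k+1}$, and $\heps=\eps^{k+1}$, gives
\[
  J(x^{k+1},\eps^{k+1}) \le J(x^k,\eps^k) - \thalf (q^k)^T W(x^k,\eps^k) q^k \le J(x^k,\eps^k),
\]
provided $\eps^{k+1}\in(0,\eps^k]$. But by Step 2 of IRWA, either $\eps^{k+1}\in(0,\eta\eps^k]\subset(0,\eps^k]$ or $\eps^{k+1}=\eps^k$, so this hypothesis is always satisfied. Hence, setting $\alf := J(x^0,\eps^0)$, induction yields $J(x^k,\eps^k)\le\alf$ for every $k\in\bN$.

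Next I would strip the $\eps^k$ dependence. Since $\sqrt{t+s^2}$ is nondecreasing in $s\ge0$, the function $\eps\mapsto J(x,\eps)$ is nondecreasing componentwise for each fixed $x$. In particular, $J(x^k,0)\le J(x^k,\eps^k)\le \alf$, so $x^k\in L(\alf,0)$ for every $k$. Invoking Theorem~\ref{thm:boundedness}, if \eqref{boundedness} holds then $L(\alf,0)$ is compact, and thus $\{x^k\}$ is bounded.

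It remains to verify that each of the three listed conditions implies \eqref{boundedness}. For (1), $X$ compact gives $X^\infty=\{0\}$ by Proposition~\ref{hzn basic facts}(i), so the only candidate $\bx$ in \eqref{boundedness} is $0$. For (2), $H\succ 0$ means $\ker(H)=\{0\}$, again forcing $\bx=0$. For (3), $C$ compact gives $C^\infty=\{0\}$ by Proposition~\ref{hzn basic facts}(i), so the constraint $A\bx\in C^\infty$ becomes $A\bx=0$, i.e., $\bx\in\ker(A)$; then the standing hypothesis $X^\infty\cap\ker(H)\cap\ker(A)=\{0\}$ closes the argument.

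I do not anticipate a serious obstacle: the descent inequality from Lemma~\ref{Gfun} already does the heavy lifting, and the only subtle point is ensuring that $\eps^{k+1}\le\eps^k$ so Lemma~\ref{Gfun} applies as stated, which is immediate from the update rule in Step 2.
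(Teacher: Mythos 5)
Your argument is correct and is exactly the route the paper intends: the corollary is stated without proof, but the descent inequality of Lemma~\ref{Gfun} (giving $\{x^k\}$ contained in a fixed sublevel set) combined with the compactness criterion of Theorem~\ref{thm:boundedness} is precisely the implicit justification, and is how the corollary is used later in the proof of Theorem~\ref{thm:H pd implies cvg}. The only cosmetic point is that $\alf:=J(x^0,\eps^0)$ need not be positive as the first part of Theorem~\ref{thm:boundedness} nominally assumes; replacing $\alf$ by $\max\{\alf,1\}$ and using $L(\alf,0)\subset L(\max\{\alf,1\},0)$ fixes this trivially.
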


\begin{rem}
  For future reference, observe that
  \begin{equation}\label{kernel tA}
    \ker(H)\cap\ker(A)=\ker(\tA),
  \end{equation}
  where $\tA$ is defined in \eqref{tA}.
\end{rem}

\subsection{Convergence of IRWA}

We now return to our analysis of the convergence of IRWA by first proving the following lemma that discusses critical properties of the sequence of iterates computed in the algorithm.

\begin{lemma}\label{irwa properties}
  Suppose that the sequence $\{(x^k,\eps^k)\}$ is generated by IRWA with initial point $x^0 \in X$ and relaxation vector $\epsilon^0 \in \bR^l_{++}$, and, for $k\in\bN$, let $q_i^k$ and $r_i^k$ for $i\in\cI_0$ be as defined in Step 2 of the algorithm with 
  \[
    q^k:=((q_0^k)^T,\dots,(q_l^k)^T)^T
    \quad\mbox{and}\quad
    r^k:=((r_0^k)^T,\dots,(r_l^k)^T)^T.
  \]
  Moreover, for $k \in \bN$, define 
  \[
    w^k_i:=w_i(x^k,\eps^k)\ \mbox{for}\ i\in\cI_0
    \quad\mbox{and}\quad
    W_k:=W(x^k,\eps^k),
  \]
  and set $S:=\set{k}{\eps^{k+1}\le\eta\eps^k}$.  Then, the sequence $\{J(x^{k}, \eps^k)\}$ is monotonically decreasing.  Moreover, either $\inf_{k\in\bN} J(x^k,\eps^k) = -\infty$, in which case $\inf_{x\in X} J_0(x) = -\infty$, or the following hold: 
  \begin{enumerate}
    \item $\sum_{k=0}^\infty (q^k)^TW_kq^k< \infty$.
    \item $\eps^k\rightarrow 0$ and $H(x^{k+1}-x^k)\to 0$.
    \item $W_kq^k\overset{S}{\rightarrow}0$.
    \item $w_i^kr_i^k=r^k_i/\sqrt{\tnorm{r^k_i}^2+\eps^k_i}\in\bB_2\cap\ncone{\Pri(A_ix^k+b_i)}{C_i},\, i\in\cI,\, k\in\bN$.
    \item $-\tA^TW_kq^k\in (\nabla\varphi(x^k)+\sum_{i\in\cI} A_i^Tw_i^kr_i^k) + \ncone{x^{k+1}}{X},\, k\in\bN$.
    \item If $\{\dist{Ax^k+b}{C}\}_{k\in S}$ is bounded, then $q^k\overset{S}{\rightarrow}0$. 
  \end{enumerate}
\end{lemma}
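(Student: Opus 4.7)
The plan is to first set up the descent structure via Lemma \ref{Gfun} and then peel off each item. Because $\eps^{k+1}\le\eps^k$ at every iteration and $x^{k+1}$ minimizes $\hat G_{(x^k,\eps^k)}$ over $X$, Lemma \ref{Gfun} applied at $(\tx,\teps,\hx,\heps)=(x^k,\eps^k,x^{k+1},\eps^{k+1})$ immediately yields
\[
  J(x^k,\eps^k)-J(x^{k+1},\eps^{k+1})\ge \thalf (q^k)^T W_k q^k\ge 0,
\]
which is the monotonicity claim. If $\inf_k J(x^k,\eps^k)=-\infty$, the pointwise bound $J_0(x)\le J(x,\eps)$ (valid for any $\eps\in\R^l_+$) gives $\inf_{x\in X}J_0=-\infty$. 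Otherwise, telescoping the descent inequality yields Part 1. Part 4 is immediate from Theorem \ref{projection theorem}(1), which places $r_i^k$ in $\ncone{P_{C_i}(A_ix^k+b_i)}{C_i}$; scaling by the nonnegative $w_i^k$ preserves this membership and makes $\tnorm{w_i^kr_i^k}=\tnorm{r_i^k}/\sqrt{\tnorm{r_i^k}^2+(\eps_i^k)^2}\le 1$. Part 5 falls out of rearranging the first-order stationarity identity $-\hv^k=g+\tA^T W_k(q^k+r^k)$ already derived inside the proof of Lemma \ref{Gfun}, observing that the $i=0$ block of $\tA^T W_k r^k$ collapses to $Hx^k$ because $w_0^k\equiv 1$, $b_0=0$, and $C_0=\{0\}$.

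The main technical hurdle is Part 2, namely showing $\eps^k\to 0$. I would argue by contradiction: if $S$ were finite, then $\eps^k\equiv\bar\eps>0$ for all $k$ past some $k_0$, and condition \eqref{step 2 inequality} would fail at every such $k$, producing indices $i(k)\in\cI$ with $\tnorm{q_{i(k)}^k}>M[\tnorm{r_{i(k)}^k}^2+\bar\eps_{i(k)}^2]^{1/2+\gamma}$. Squaring and dividing by $\sqrt{\tnorm{r_{i(k)}^k}^2+\bar\eps_{i(k)}^2}$ forces $w_{i(k)}^k\tnorm{q_{i(k)}^k}^2\ge M^2\bar\eps_{i(k)}^{1+4\gamma}$, and a pigeonhole over the finite set $\cI$ then exhibits an infinite subsequence on which a fixed positive lower bound holds, contradicting Part 1. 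Hence $S$ is infinite, and combined with $\eps^{k+1}\le\eps^k$ always plus $\eps^{k+1}\le\eta\eps^k$ for $k\in S$ (with $\eta\in(0,1)$), $\eps^k\to 0$. The second claim in Part 2 then follows because $w_0^k\equiv 1$ places $\tnorm{q_0^k}^2$ inside the summable series of Part 1, so $q_0^k=A_0(x^{k+1}-x^k)\to 0$ and $H(x^{k+1}-x^k)=A_0^T q_0^k\to 0$.

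For Part 3 I would use a two-case split along subsequences of $S$: given any subsequence of $S$, extract a sub-subsequence on which $\tnorm{r_i^k}^2+(\eps_i^k)^2$ either tends to zero or remains above some $\delta>0$. In the first case, squaring \eqref{step 2 inequality} and dividing by the weight's denominator yields $\tnorm{w_i^kq_i^k}^2\le M^2[\tnorm{r_i^k}^2+(\eps_i^k)^2]^{2\gamma}\to 0$; in the second, $w_i^k\le 1/\sqrt{\delta}$ is bounded, so $\tnorm{w_i^kq_i^k}^2=w_i^k\cdot(w_i^k\tnorm{q_i^k}^2)\to 0$ by Part 1. Because every subsequence of $S$ admits a sub-subsequence tending to zero, $W_kq^k\overset{S}{\to}0$. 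Finally, Part 6 is quick: the boundedness of $\{\dist{Ax^k+b}{C}\}_{k\in S}$ combined with \eqref{prod dist} and $\eps^k\to 0$ makes $\sqrt{\tnorm{r_i^k}^2+(\eps_i^k)^2}$ bounded on $S$, so writing $\tnorm{q_i^k}=\tnorm{w_i^kq_i^k}\sqrt{\tnorm{r_i^k}^2+(\eps_i^k)^2}$ and invoking Part 3 gives $q_i^k\overset{S}{\to}0$ for $i\in\cI$, while $q_0^k\to 0$ was already established in Part 2.
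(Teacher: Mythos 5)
Your proof is correct, and its overall architecture---descent via Lemma \ref{Gfun} and monotonicity of $\{\eps^k\}$, telescoping for Part 1, contradiction for Part 2, subsequence extraction for Parts 3 and 6, and direct verification of Parts 4 and 5 from Theorem \ref{projection theorem} and the stationarity condition---matches the paper's. Where you genuinely diverge is in the mechanics of Parts 2 and 3. For Part 2, the paper partitions $\cI$ along a subsequence into indices where $\tnorm{q_i^k}\to 0$ and indices where $w_i^k\tnorm{q_i^k}\to 0$, and concludes that \eqref{step 2 inequality} must eventually hold, contradicting the standing assumption that $\eps^k$ is never reduced; you instead square the failed inequality and multiply by $w_{i(k)}^k$ to obtain the uniform lower bound $w_{i(k)}^k\tnorm{q_{i(k)}^k}^2 > M^2\bar\eps_{i(k)}^{1+4\gamma}$ and then pigeonhole against the summability from Part 1. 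Your route is shorter, avoids the partition entirely, and lands the contradiction on Part 1 rather than on the update rule. Similarly, for Part 3 the paper reruns the partition argument and derives a contradiction from $w_{i_0}^k\to\infty$, whereas you split directly on whether $\tnorm{r_i^k}^2+(\eps_i^k)^2$ tends to zero along a further subsequence (where \eqref{step 2 inequality}, valid on $S$, gives $w_i^k\tnorm{q_i^k}\le M[\tnorm{r_i^k}^2+(\eps_i^k)^2]^{\gamma}\to 0$) or stays bounded below (where $w_i^k$ is bounded and Part 1 applies); this dichotomy is arguably the cleaner way to see what is going on, and both arguments rest on the same subsequence-of-subsequence principle. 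Parts 4, 5, and 6 are handled essentially as in the paper, except that the paper proves Part 6 by contradiction while you factor $\tnorm{q_i^k}=\bigl(w_i^k\tnorm{q_i^k}\bigr)\sqrt{\tnorm{r_i^k}^2+(\eps_i^k)^2}$ and argue directly---an immaterial difference.
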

\begin{proof}
  The fact that $\{J(x^{k}, \eps^k)\}$ is monotonically decreasing is an immediate consequence of the monotonicity of the sequence $\{\epsilon_k\}$, Lemma \ref{Gfun}, and the fact that $W_k$ is positive definite for all $k \in \bN$.  If $J(x^{k}, \eps^k)\to -\infty$, then $\inf_{x\in X}J_0(x)\ =-\infty$ since $J_0(x)=J(x,0)\le J(x,\eps)$ for all $x\in\Rn$ and $\eps\in\R^l_+$.  All that remains is to show that Parts (1)--(6) hold when $\inf_{k\in\bN} J(x^k,\eps^k)>-\infty$, in which case we may assume that the sequence $\{J(x^k,\eps^k)\}$ is bounded below.  We define the lower bound $\tJ:=\inf_{k\in\bN} J(x^k,\eps^k) = \lim_{k\in\bN} J(x^k,\eps^k)$ for the remainder of the proof.

  \noindent
  (1) By Lemma \ref{Gfun}, for every positive integer $\bk$ we have
  \begin{align*}
    \thalf \sum_{k=0}^\bk (q^k)^TW_kq^k 
      &\leq \sum_{k=0}^\bk [J(x^{k}, \eps^k) - J(x^{k+1}, \eps^{k+1})] \\
      &=    J(x^{0}, \eps^0) - J(x^{\bk+1}, \eps^{\bk+1}) \\
      &\leq J(x^{0}, \eps^0) - \tJ.
  \end{align*}
  Therefore, as desired, we have
  \begin{equation*}
    \sum_{k=0}^\infty (q^k)^TW_kq^k \leq 2(J(x^{0}, \eps^0)-\tJ) < \infty.
  \end{equation*}

  \noindent
  (2) Since $\eta\in (0,1)$, if $\eps^k \nrightarrow 0$, then there exists an integer $\bk \geq 0$ and a scalar $\beps > 0$ such that $\eps^k = \beps$ for all $k\ge\bk$.   Part (1) implies that $(q^k)^TW_kq^k$ is summable so that $(w_i^k\tnorm{q^k_i})(\tnorm{q^k_i})=w_i^k\tnorm{q^k_i}^2 \rightarrow 0$ for each $i\in\cI_0$.  In particular, since $w_0^k := 1$ for all $k \in \bN$, this implies that $q^k_0\to 0$, or equivalently that $H(x^{k+1}-x^k)\to 0$.  In addition, since for each $i\in\cI$ both sequences $\{\tnorm{q^k_i}\}$ and $\{w^k_i\tnorm{q^k_i}\}$ cannot be bounded away from $0$, there is a subsequence $\hS\subset\bN$ and a partition $\{\cI_1,\cI_2\}$ of $\cI$ such that $\tnorm{q^k_i}\overset{\hS}{\rightarrow} 0$ for all $i\in \cI_1$ and $w_i^k\tnorm{q^k_i}\overset{\hS}{\to}0$ for all $i\in \cI_2$.  Hence, there exists $k_0\in \hS$ such that for all $k\ge k_0$ we have
  \begin{equation*}
    \begin{array}{rrcll}
      & \tnorm{q^k_i}      &\le& M\Big[ \tnorm{r_i^k}^2 +\beps_i^2 \Big]^{\half+\gamma}&\forall\, i \in \cI_1 \\
      \mbox{and}
      & w_i^k\tnorm{q^k_i} &\le& M\Big[ \tnorm{r_i^k}^2 +\beps_i^2 \Big]^{\gamma}&\forall\, i \in \cI_2.
    \end{array}
  \end{equation*}
  Therefore, since $w_i^k = (\|r_i^k\|^2 + (\eps_i^k)^2)^{-1/2}$, we have for all $k_0\le k\in \hS$ that
  \[
    \tnorm{q_i^k} \leq  M\Big[ \tnorm{r_i^k}^2 + \beps_i^2 \Big]^{\frac{1}{2}+\gamma}\quad \forall i \in \cI.
  \]
  However, for every such $k$, Step 2 of the algorithm chooses $\eps^{k+1}\in(0,\eta\eps^k]$.  This contradicts the supposition that $\eps^k = \beps > 0$ for all $k\ge\bk$, so we conclude that $\eps^k \rightarrow 0$.
 
  \noindent
  (3) It has just been shown in Part (2) that $w^k_0q^k_0=q^k_0\to 0$, so we need only show that $w^k_i\tnorm{q^k_i}\overset{S}{\to} 0$ for each $i \in \cI$.  
  
  Our first step is to show that for every subsequence $\hS\subset S$ and $i_0\in\cI$, there is a further subsequence $\tS\subset \hS$ such that $w^k_{i_0}\tnorm{q^k_{i_0}}\overset{\tS}{\to} 0$. The proof uses a trick from the proof of Part (2).  Let $\hS\subset S$ be a subsequence and $i_0\in\cI$.  Part~(1) implies that $(w_i^k\tnorm{q^k_i})(\tnorm{q^k_i})=w_i^k\tnorm{q^k_i}^2 \rightarrow 0$ for each $i\in\cI_0$.  As in the proof of Part (2), this implies that there is a further subsequence $\tS\subset \hS$ and a partition $\{\cI_1,\cI_2\}$ of $\cI$ such that $\tnorm{q^k_i}\overset{\tS}{\rightarrow} 0$ for all $i\in \cI_1$ and $w_i^k\tnorm{q^k_i}\overset{\tS}{\to}0$ for all $i\in \cI_2$.  If $i_0\in \cI_2$, then we would be done, so let us assume that $i_0\in \cI_1$.  We can assume that $\tS$ contains no subsequence on which $w^k_{i_0}\tnorm{q_{i_0}^k}$ converges to $0$ since, otherwise, again we would be done.  Hence, we assume that $w^k_{i_0}\tnorm{q_{i_0}^k}\overset{\tS}{\nrightarrow} 0$.  Since $\tnorm{q^k_{i_0}}\overset{\tS}{\rightarrow} 0$ as $i_0\in \cI_1$, this implies that there is a subsequence $\tS_0\subset\tS$ such that $w^k_{i_0}\overset{\tS_0}{\to}\infty$, i.e., $(\tnorm{r_{i_0}^k}^2 +(\eps^k_{i_0})^2)\overset{\tS_0}{\to}0$.  But, by Step 2 of the algorithm, for all $k\in S$,
  \[
    \tnorm{q_{i}^k} \leq  M\Big[ \tnorm{r_{i}^k}^2 + (\eps_{i}^k)^2 \Big]^{\frac{1}{2}+\gamma}\quad \forall\, i \in \cI,
  \]
  or, equivalently,
  \[
    w^k_{i}\tnorm{q_{i}^k} \leq  M\Big[ \tnorm{r_{i}^k}^2 + (\eps_{i}^k)^2 \Big]^{\gamma}\quad \forall\, i \in \cI,
  \]
  giving the contradiction $w^k_{i_0}\tnorm{q_{i_0}^k}\overset{\tS_0}{\to}0$.  Hence, $w^k_{i_0}\tnorm{q_{i_0}^k}\overset{\tS}{\to}0$, and we have shown that for every subsequence $\hS\subset S$ and $i_0\in\cI$, there is $\tS\subset \hS$ such that $w^k_i\tnorm{q^k_i}\overset{\tS}{\to} 0$.

  Now, if $W_kq^k\overset{S}{\nrightarrow}0$, then there would exist a subsequence $\hS\subset S$ and an index $i\in\cI$ such that $\{w^k_i\tnorm{q^k_i}\}_{k\in\hS}$ remains bounded away from $0$.  But, by what we have just shown in the previous paragraph, $\hS$ contains a further subsequence $\tS\subset\hS$ with $w^k_i\tnorm{q^k_i}\overset{\tS}{\to}0$. This contradiction establishes the result.
 
  \noindent
  (4) By Theorem \ref{projection theorem}, we have
  \[
    r^k_i\in\ncone{\Pri(A_ix^k+b_i)}{C_i}\quad\forall\, i\in\cI_0,\ k\in\bN,
  \]
  from which the result follows.
 
  \noindent
  (5) By convexity, the condition $x^{k+1}\in\argmin_{x\in X} \hG_{(x^k,\eps^k)}(x)$ is equivalent to
  \begin{eqnarray*}
    0 &\in& \nabla_x\hG_{(x^k,\eps^k)}(x^{k+1})+\ncone{x^{k+1}}{X} \\
      &=  & g+\sum_{i\in\cI_0}A_i^Tw_i^k(q^k_i + r_i^k) + \ncone{x^{k+1}}{X}\\
      &=  &\tA^TW_kq^k+\nabla\varphi(x^k)+\sum_{i\in\cI}A_i^Tw_i^kr_i^k + \ncone{x^{k+1}}{X}.
  \end{eqnarray*}
  
  \noindent
  (6) Let $i\in\cI$.  We know from Part (3) that $w^k_i\tnorm{q^k_i}\overset{S}{\to} 0$.  If $\tnorm{q^k_i}\overset{S}{\nrightarrow} 0$, then there exists a subsequence $\hS\subset S$ such that $\{\tnorm{q^k_i}\}_{k\in\hS}$ is bounded away from $0$, which would imply that $(\tnorm{r_i^k}^2 + (\eps^k_i)^2)^{-1/2}=w^k_i\overset{\hS}{\to} 0$.  But then $\tnorm{r^k_i}\overset{\hS}{\to}\infty$ since $0\le \eps^k\le\eps^0$, which contradicts the boundedness of $\{\dist{Ax^k+b}{C}\}_{k\in S}$.
\end{proof}

In the next result, we give conditions under which every cluster point of the subsequence $\{x^k\}_{k\in S}$ is a solution to $\min_{x\in X} J_0(x)$, where $S$ is defined in Lemma~\ref{irwa properties}.  Since $J_0$ is convex, this is equivalent to showing that $0\in \sd J_0(\bx)+\ncone{\bx}{X}.$

\begin{theorem}\label{main}
  Suppose that the sequence $\{(x^k,\eps^k)\}$ is generated by IRWA with initial point $x^0\in X$ and relaxation vector $\epsilon^0 \in \bR^l_{++}$, and that the sequence $\{J(x^k,\eps^k)\}$ is bounded below.  Let $S$ be defined as in Lemma \ref{irwa properties}.  If either
  \begin{enumerate}
    \item[(a)] $\ker(A)\cap\ker(H)=\{0\}$ and $\{\dist{Ax^k+b}{C}\}_{k\in S}$ is bounded, or
    \item[(b)] $X=\Rn$,
  \end{enumerate} 
  then any cluster point $\bx$ of the subsequence $\{x^k\}_{k \in S}$ satisfies $0\in\sd J_0(\bx)+\ncone{\bx}{X}$.  Moreover, if (a) holds, then $(x^{k+1}-x^k)\overset{S}{\to} 0$.
\end{theorem}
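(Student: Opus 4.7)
The plan is to pass to the limit in the optimality inclusion of Lemma~\ref{irwa properties}(5) along a subsequence of $S$ on which $x^k\to\bx$; the two hypotheses (a) and (b) serve only to control the normal cone term and, under (a), to ensure that $x^{k+1}$ shares the same cluster point. Handling the ``moreover'' assertion first, under (a) Lemma~\ref{irwa properties}(6) together with the boundedness of $\{\dist{Ax^k+b}{C}\}_{k\in S}$ gives $q^k\overset{S}{\rightarrow}0$. By \eqref{kernel tA} and $\ker(A)\cap\ker(H)=\{0\}$ we have $\ker(\tA)=\{0\}$, so $\tA$ is injective, and $q^k=\tA(x^{k+1}-x^k)\overset{S}{\rightarrow}0$ forces $(x^{k+1}-x^k)\overset{S}{\rightarrow}0$.

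Now fix a cluster point $\bx$ of $\{x^k\}_{k\in S}$ and a subsequence $\hat S\subset S$ with $x^k\overset{\hat S}{\rightarrow}\bx$; under (a), $x^{k+1}\overset{\hat S}{\rightarrow}\bx$ as well. I would next analyze $w_i^kr_i^k$ for each $i\in\cI$, splitting according to whether $i$ lies in $\cA:=\cA(A\bx+b)$, and extract a further subsequence (still denoted $\hat S$) on which each $w_i^kr_i^k$ converges to some limit $u_i^*$. For $i\notin\cA$, nonexpansiveness of $P_{C_i}$ from Theorem~\ref{projection theorem}(2) yields $r_i^k\to\br_i:=(I-P_{C_i})(A_i\bx+b_i)\neq 0$; combined with $\eps_i^k\to 0$ from Lemma~\ref{irwa properties}(2), this gives $u_i^*=\br_i/\tnorm{\br_i}$, the relevant gradient in \eqref{sd prod dist detail}. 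For $i\in\cA$, $r_i^k\to 0$ while $\tnorm{w_i^kr_i^k}\le 1$ by Lemma~\ref{irwa properties}(4), so $\tnorm{u_i^*}\le 1$; since also $w_i^kr_i^k\in\ncone{P_{C_i}(A_ix^k+b_i)}{C_i}$ and $P_{C_i}(A_ix^k+b_i)\to A_i\bx+b_i\in C_i$, outer semicontinuity of the normal cone mapping to a closed convex set places $u_i^*\in\bB_2\cap\ncone{A_i\bx+b_i}{C_i}$, again matching \eqref{sd prod dist detail}.

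Finally I would take the limit in Lemma~\ref{irwa properties}(5): by Part~(3), $\tA^TW_kq^k\overset{S}{\rightarrow}0$; $\nabla\varphi$ is continuous; and $\ncone{\cdot}{X}$ is outer semicontinuous. Under (a), $x^{k+1}\overset{\hat S}{\rightarrow}\bx$ lets the normal cone term survive the limit, yielding
\[
0\in\nabla\varphi(\bx)+\sum_{i\in\cI}A_i^Tu_i^*+\ncone{\bx}{X}\subset\sd J_0(\bx)+\ncone{\bx}{X}
\]
via \eqref{partial sd for J}. Under (b), $\ncone{\bx}{X}=\{0\}$, so neither the kernel hypothesis nor Part~(6) is required; only Part~(3) is used, and the $w_i^kr_i^k$ analysis is unchanged.

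The hard part is the active case $i\in\cA$: individually $w_i^k$ may blow up and $r_i^k$ must vanish, so one cannot examine them separately. The argument rests entirely on the renormalization identity $w_i^kr_i^k=r_i^k/\sqrt{\tnorm{r_i^k}^2+(\eps_i^k)^2}$, which supplies the uniform bound $\tnorm{w_i^kr_i^k}\le 1$ guaranteeing a convergent subsequence, and on outer semicontinuity of the normal cone mapping to certify the limit as a bona fide element of $\sd\tdist{A_i\bx+b_i}{C_i}$.
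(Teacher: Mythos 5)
Your proposal is correct and follows essentially the same route as the paper's proof: pass to the limit in the inclusion of Lemma~\ref{irwa properties}(5) along a subsequence of $S$, using Part~(3) for the $\tA^TW_kq^k$ term, the bound from Part~(4) plus outer semicontinuity of the normal cone for the active indices, continuity of $I-P_{C_i}$ together with $\eps^k\to0$ for the inactive indices, and Part~(6) with $\ker(\tA)=\{0\}$ to get $x^{k+1}-x^k\overset{S}{\to}0$ under (a). The only cosmetic difference is that you deduce $x^{k+1}-x^k\overset{S}{\to}0$ directly from the lower bound of the injective map $\tA$, whereas the paper runs the equivalent normalization-and-contradiction argument.
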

\begin{proof}
  Let the sequences $\{q^k\},\ \{r^k\}$ and $\{W_k\}$ be defined as in Lemma \ref{irwa properties}, and let $\bx$ be a cluster point of the subsequence $\{x^k\}_{k \in S}$.  Let $\hS\subset S$ be a subsequence such that $x^k \overset{\hS}{\rightarrow} \bar x$.  Without loss of generality, due to the upper semi-continuity of the normal cone operator, the continuity of the projection operator and Lemma \ref{irwa properties}(4), we can assume that for each $i\in \mathcal{A}(A\bx+b)$ there exists
  \begin{equation}\label{limit not in I}
    \bu_i\in\bB_2\cap\ncone{A_i\bx+b_i}{C_i}
    \quad\mbox{such that}\quad
    w^k_ir^k_i\overset{\hS}{\rightarrow} \bu_i.
  \end{equation}
  Also due to the continuity of the projection operator, for each $i\notin I(A\bx+b)$ we have 
  \begin{equation}\label{limit in I}
    w^k_ir^k_i\overset{\hS}{\rightarrow} \frac{(I-P_{C_i})(A_i\bx+b_i)}{\tnorm{(I-P_{C_i})(A_i\bx+b_i)}}.
  \end{equation}

  Let us first suppose that (b) holds, i.e., that $X=\Rn$ so that $\ncone{x}{X}=\{0\}$ for all $x\in\Rn$.  By \eqref{limit not in I}-\eqref{limit in I}, Lemma~\ref{irwa properties} Parts (3) and (5), and \eqref{partial sd for J}, we have
  \[
    \begin{aligned}
      0 &\in \nabla \varphi(\bx)+\!\!\!\!\! \sum_{i\notin \mathcal{A}(A\bx+b)}A_i^T\frac{(I-P_{C_i})(A_i\bx+b_i)}{\tnorm{(I-P_{C_i})(A_i\bx+b_i)}}
+\!\!\!\!\! \sum_{i\in \mathcal{A}(A\bx+b)}A_i^T(\bB_2\cap\ncone{A_i\bx+b_i}{C_i}) \\ 
        &=   \sd J_0(\bx).
    \end{aligned}
  \]

  Next, suppose that (a) holds, i.e., that $\ker(A)\cap\ker(H)=\{0\}$ and the set $\{\dist{Ax^k+b}{C}\}_{k\in S}$ is bounded.  This latter fact and  Lemma~\ref{irwa properties}(6) implies that $q^k\overset{S}{\to}0$.  We now show that $(x^{k+1}-x^k)\overset{S}{\to} 0$.  Indeed, if this were not the case, then there would exist a subsequence $\hS\subset S$ and a vector $\bw\in\Rn$ with $\tnorm{\bw}=1$ such that $\{\tnorm{x^{k+1}-x^k}\}_{\hS}$ is bounded away from $0$ while $\frac{x^{k+1}-x^k}{\tnorm{x^{k+1}-x^k}}\overset{\hS}{\to} \bw$.  But then $q^k/\tnorm{x^{k+1}-x^k}\overset{\hS}{\to} 0$ while $q^k/\tnorm{x^{k+1}-x^k}=\tA\frac{x^{k+1}-x^k}{\tnorm{x^{k+1}-x^k}}\overset{\hS}{\to}\tA\bw$, where $\tA$ is defined in \eqref{tA}.  But then $0\ne \bw\in\ker(H)\cap\ker(A) = \ker(\tA)$, a contradiction. Hence, $(x^{k+1}-x^k)\overset{S}{\to} 0$, and so $x^{k+1}=x^k+(x^{k+1}-x^k)\overset{S}{\to} \bx$.  In particular, this and the upper semi-continuity of the normal cone operator imply that $\limsup_{k\in S}\ncone{x^{k+1}}{X}\subset\ncone{\bx}{X}$.  Hence, by \eqref{limit not in I}--\eqref{limit in I}, Lemma \ref{irwa properties} Parts (3) and (5), and \eqref{partial sd for J}, we have
  \[
    \begin{aligned}
      0 &\in \nabla \varphi(\bx)+\!\!\!\!\! \sum_{i\notin I(A\bx+b)}A_i^T\frac{(I-P_{C_i})(A_i\bx+b_i)}{\tnorm{(I-P_{C_i})(A_i\bx+b_i)}} + \!\!\!\!\! \sum_{i\in I(A\bx+b)}A_i^T(\bB_2\cap\ncone{A_i\bx+b_i}{C_i}) \\ 
        &\qquad\qquad + \ncone{\bx}{X} \\ 
        &= \sd J_0(\bx)+\ncone{\bx}{X},
    \end{aligned}
  \]
  as desired.
\end{proof}
 
The previously stated Corollary \ref{cor:compactness} provides conditions under which the sequence $\{x^k\}$ has cluster points.  One of these conditions is that $H$ is positive definite.  In such cases, the function $J_0$ is strongly convex and so the problem \eqref{s-dist} has a unique global solution $x^*$, meaning that the entire sequence converges to $x^*$.  We formalize this conclusion with the following theorem.

\begin{theorem}\label{thm:H pd implies cvg}
  Suppose that $H$ is positive definite and the sequence $\{(x^k,\eps^k)\}$ is generated by IRWA with initial point $x^0 \in X$ and relaxation vector $\eps^0 \in \bR^l_{++}$.  Then, the problem \eqref{s-dist} has a unique global solution $x^*$ and $x^k\rightarrow x^*$.
\end{theorem}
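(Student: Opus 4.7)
The plan is to combine strong convexity of $J_0$ induced by positive definiteness of $H$ with the convergence machinery already developed for IRWA. Positive definiteness of $H$ makes $\varphi$, and hence $J_0$, strongly convex; this simultaneously yields coercivity and uniqueness of the global solution $x^*$ of \eqref{s-dist}, since $X$ is closed and convex. By Corollary \ref{cor:compactness}(2), the iterate sequence $\{x^k\}$ generated by IRWA is bounded, and therefore so is $\{\dist{Ax^k+b}{C}\}$. Strong convexity also ensures $\inf_{x\in X} J_0(x) = J_0(x^*) > -\infty$, so $\{J(x^k,\eps^k)\}$ is bounded below and all conclusions of Lemma \ref{irwa properties} are available; in particular $\eps^k \to 0$, which forces the index set $S$ from that lemma to be infinite, since otherwise $\eps^k$ would stabilize at a strictly positive vector.

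With the full power of Lemma \ref{irwa properties} in hand, I would next invoke Theorem \ref{main} in case (a): positive definiteness of $H$ trivially gives $\ker(A)\cap\ker(H)=\{0\}$, and the boundedness of $\{\dist{Ax^k+b}{C}\}_{k\in S}$ was established above. Hence every cluster point of $\{x^k\}_{k\in S}$ satisfies the optimality condition $0\in \sd J_0(\bar x)+\ncone{\bar x}{X}$, and by uniqueness must equal $x^*$. Since $\{x^k\}_{k\in S}$ is bounded with $x^*$ as its only cluster point, $x^k \overset{S}{\to} x^*$.

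The only real obstacle is to promote this subsequential statement along $S$ to convergence of the full sequence. For this, I would use that Lemma \ref{Gfun} makes $\{J(x^k,\eps^k)\}$ monotonically decreasing, hence convergent to some limit $\beta \ge J_0(x^*)$. Joint continuity of $J$ together with $\eps^k\to 0$ implies that along any subsequence with $x^{k_j}\to\bar x$ one has $J(x^{k_j},\eps^{k_j})\to J_0(\bar x)$, so $J_0(\bar x)=\beta$ for every cluster point $\bar x$ of the full sequence. The subsequential convergence along $S$ identifies $\beta=J_0(x^*)$, so every cluster point $\bar x$ satisfies $J_0(\bar x)=J_0(x^*)$; strict convexity then forces $\bar x=x^*$. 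A bounded sequence in $X$ with a unique cluster point converges, yielding $x^k\to x^*$.
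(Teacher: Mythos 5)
Your proposal is correct and follows essentially the same route as the paper's own proof: strong convexity from $H\succ 0$ gives uniqueness of $x^*$, Corollary \ref{cor:compactness} gives boundedness of the iterates, Theorem \ref{main}(a) gives $x^k\overset{S}{\to}x^*$, and the monotone decrease of $J(x^k,\eps^k)$ combined with continuity of $J$ and uniqueness of the minimizer upgrades this to convergence of the full sequence. Your added observations (that $S$ must be infinite because $\eps^k\to 0$, and the explicit identification of the limit $\beta$ of the objective values) only make explicit steps the paper leaves implicit.
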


\begin{proof}
  Since $H$ is positive definite, the function $J(x,\eps)$ is strongly convex in $x$ for all $\eps\in\R^l_+$. In particular, $J_0$ is strongly convex and so \eqref{s-dist} has a unique global solution $x^*$.  By Corollary \ref{cor:compactness}, the set $L(J(x^0,\eps^0),\eps^0)$ is compact, and, by  Lemma~\ref{Gfun}, the sequence $J(x^k,\eps^k)$ is decreasing; hence, $\{x^k\}\subset L(J(x^0,\eps^0),\eps^0)$.  Therefore, the set $\{\dist{Ax^k+b}{C}\}_{k\in S}$ is bounded and $\ker(H)\cap\ker(A)\subset\ker(H)=\{0\}$, and so, by Theorem \ref{main}, the subsequence $\{x^k\}_{k\in S}$ has a cluster point $\bx$ satisfying $0\in\sd J_0(\bx)+\ncone{\bx}{X}$.  But the only such point is $\bx=x^*$, and hence $x^k\overset{S}{\rightarrow}x^*$. 

  Since the sequence $\{J(x^k,\eps^k)\}$ is monotonically decreasing and bounded below by Corollary \ref{cor:compactness}, it has a limit $\tJ$.  Since $x^k\overset{S}{\rightarrow}x^*$, we have $\tJ=\min_{x\in X}J_0(x)$.  Let $\tS$ be any subsequence of $\bN$. Since $\{x^k\}_{k\in\tS}\subset L(J(x^0,\eps^0),\eps^0)$ (which is compact by Corollary \ref{cor:compactness}(2)), this subsequence has a further subsequence $\tS_0\subset \tS$ such that $x^k\overset{\tS_0}{\rightarrow}\bx$ for some $\bx\in X$.  For this subsequence, $J(x^k,\eps^0)\overset{\tS_0}{\rightarrow}\tJ$, and, by continuity, $J(x^k,\eps^0)\overset{\tS_0}{\rightarrow} J(\bx,0)=J_0(\bx)$. Hence, $\bx=x^*$ by uniqueness. Therefore, since every subsequence of $\{x^k\}$ has a further subsequence that converges to $x^*$, it must be the case that the entire sequence converges to $x^*$.
\end{proof}

\subsection{Complexity of IRWA}

A point $\tx\in X$ is an $\veps$-optimal solution to \eqref{s-dist} if
\beq\label{eps opt sol}
  J_0(\tx)\le \inf_{x\in X} J_0(x)+\veps.
\eeq
In this section, we prove the following result.

\begin{theorem}\label{thm: IRWA complexity}
  Consider the problem \eqref{s-dist} with $X=\Rn$ and $H$ positive definite.  Let $\veps>0$ and $\eps\in\R^l_{++}$ be such that 
  \beq\label{eps choice}
    \onorm{\eps}\le\veps/2
    \quad\mbox{and}\quad
    \veps\le 4l\tveps,
  \eeq
  where $\tveps:=\min_{i\in\cI} \eps_i$.  Suppose that the sequence $\{(x^k,\eps^k)\}$ is generated by IRWA with initial point $x^0 \in \Rn$ and relaxation vector $\eps^0 = \eps \in \bR^l_{++}$, and that the relaxation vector is kept fixed so that $\eps^k=\eps$ for all $k\in\bN$.  Then, in at most $O(1/\veps^2)$ iterations, $x^k$ is an $\veps$-optimal solution to \eqref{s-dist}, i.e., \eqref{eps opt sol} holds with $\tx=x^k$.
\end{theorem}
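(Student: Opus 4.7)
The plan is to reduce the claim to a statement about the smooth approximation $J(\cdot,\eps)$ from \eqref{J eps} and then exploit IRWA's descent on that function together with the strong convexity inherited from $H\succ 0$.

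First, from $0\le\sqrt{t+\eps_i^2}-\sqrt{t}\le\eps_i$ for $t\ge 0$, I obtain the elementary sandwich $J_0(x)\le J(x,\eps)\le J_0(x)+\onorm{\eps}$ for every $x\in\Rn$. Since $H\succ 0$ the function $J(\cdot,\eps)$ is strongly convex with modulus $\mu:=\lambda_{\min}(H)$ and therefore has a unique minimizer $x^\ast_\eps$; likewise $J_0$ has a unique minimizer $x^\ast$. Combining the sandwich with $\onorm{\eps}\le\veps/2$ yields
\[
  J_0(x^k)-J_0(x^\ast)\;\le\;J(x^k,\eps)-J(x^\ast_\eps,\eps)+\onorm{\eps}\;\le\;\delta_k+\tfrac{\veps}{2},\qquad \delta_k:=J(x^k,\eps)-J(x^\ast_\eps,\eps),
\]
so it suffices to show $\delta_k\le\veps/2$ within $O(1/\veps^2)$ iterations.

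The crucial next step is to identify $\nabla_x J(x^k,\eps)$ in terms of IRWA quantities. Because $X=\Rn$, the optimality condition for the subproblem $\cG(x^k,\eps)$ from the proof of Lemma~\ref{Gfun} (see \eqref{normal eq}) collapses to $g+\tA^TW_k(q^k+r^k)=0$. Meanwhile, because $w_0^k=1$ and $w_0^kA_0^Tr_0^k=Hx^k$, a direct computation from \eqref{J eps} gives $\nabla_xJ(x^k,\eps)=g+\tA^TW_kr^k$, and subtracting the two identities yields
\[
  \nabla_xJ(x^k,\eps)\;=\;-\tA^TW_kq^k.
\]
Writing this as $\tA^TW_k^{1/2}(W_k^{1/2}q^k)$ and using $w_i^k\le 1/\eps_i\le 1/\tveps$, the Cauchy--Schwarz inequality gives
\[
  \tnorm{\nabla_xJ(x^k,\eps)}^2\;\le\;\lambda_{\max}(\tA^TW_k\tA)\,(q^k)^TW_kq^k\;\le\;L\,(q^k)^TW_kq^k,
\]
where $L:=\lambda_{\max}(H)+\tveps^{-1}\sum_{i\in\cI}\lambda_{\max}(A_i^TA_i)$ is a uniform upper bound on $\lambda_{\max}(\tA^TW_k\tA)$, finite precisely because $\eps$ is held fixed along the iteration.

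Finally, Lemma~\ref{Gfun} with $\teps=\heps=\eps$ gives $\delta_k-\delta_{k+1}\ge\thalf(q^k)^TW_kq^k$, and the strong convexity of $J(\cdot,\eps)$ gives $\tnorm{\nabla_xJ(x^k,\eps)}^2\ge 2\mu\delta_k$; combining these,
\[
  \delta_k-\delta_{k+1}\;\ge\;\frac{1}{2L}\tnorm{\nabla_xJ(x^k,\eps)}^2\;\ge\;\frac{\mu}{L}\,\delta_k,
\]
which is the contraction $\delta_{k+1}\le(1-\mu/L)\delta_k$. The hypothesis $\veps\le 4l\tveps$ forces $\tveps\ge\veps/(4l)$, hence $L/\mu=O(1/\veps)$. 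Iterating, $\delta_k\le\veps/2$ after $k=O((L/\mu)\log(\delta_0/\veps))=O((1/\veps)\log(1/\veps))$ iterations, which is in particular $O(1/\veps^2)$. The main subtlety — and the only place that genuinely uses $X=\Rn$ — is the clean gradient identity $\nabla_xJ(x^k,\eps)=-\tA^TW_kq^k$; without it there is no direct way to convert the weighted per-step progress $(q^k)^TW_kq^k$ delivered by Lemma~\ref{Gfun} into a contraction on $\delta_k$.
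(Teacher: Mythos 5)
Your proof is correct, but it takes a genuinely different route to the key rate estimate than the paper does. Both arguments share the same two ingredients at the start and finish: the smoothing-error bound $J(x^\eps,\eps)-J_0(x^*)\le\onorm{\eps}\le\veps/2$, and the identity $\nabla_x J(x^k,\eps)=-\tA^TW_kq^k$ (which is exactly Lemma~\ref{irwa properties}(5) specialized to $X=\Rn$, used in the paper via \eqref{qux}) together with the per-step decrease $\delta_k-\delta_{k+1}\ge\thalf (q^k)^TW_kq^k$ from Lemma~\ref{Gfun}. Where you diverge is in how the decrease is converted into a rate. The paper never invokes strong convexity of $J(\cdot,\eps)$ directly: it bounds $\delta_j\le\tau\tnorm{u^j}$ using plain convexity together with an a priori bound $\tau$ on $\tnorm{x^k-x^\eps}$ (which is where coercivity and Theorem~\ref{thm:boundedness} enter), pairs this with $\delta_j-\delta_{j+1}\ge\alpha\tnorm{u^j}^2$ where $\alpha=\tveps/(2\sig_0^2)$, and obtains the sublinear recursion $\delta_{j+1}\le\delta_j-(\alpha/\tau^2)\delta_j^2$, hence $\delta_k=O(\tau^2/(\alpha k))$ and $k=O(1/(\veps\tveps))=O(1/\veps^2)$. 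You instead exploit $\lambda_{\min}(H)>0$ through the inequality $\tnorm{\nabla_xJ(x^k,\eps)}^2\ge 2\mu\,\delta_k$ and the upper bound $\tnorm{\nabla_xJ(x^k,\eps)}^2\le L\,(q^k)^TW_kq^k$ with $L=O(1/\tveps)$, which yields the linear contraction $\delta_{k+1}\le(1-\mu/L)\delta_k$ and the sharper count $O(\veps^{-1}\log(1/\veps))$. Your approach buys a better complexity bound and entirely avoids the iterate-boundedness constant $\tau$ (and hence the level-set compactness machinery), at the price of leaning on positive definiteness of $H$ in an essential way; the paper's argument is structured so that strong convexity is only needed to guarantee $\tau<\infty$, which makes it closer in spirit to rate proofs that survive in merely convex settings. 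Each step of your argument checks out: the sandwich $J_0\le J(\cdot,\eps)\le J_0+\onorm{\eps}$, the gradient identity (note $J(\cdot,\eps)$ is indeed $C^1$ for $\eps\in\R^l_{++}$ since $x\mapsto\sdist{A_ix+b_i}{C_i}$ is $C^1$), the eigenvalue bound $\lambda_{\max}(\tA^TW_k\tA)\le\lambda_{\max}(H)+\tveps^{-1}\sum_{i\in\cI}\lambda_{\max}(A_i^TA_i)$ via $w_i^k\le1/\tveps$, and the final use of $\tveps\ge\veps/(4l)$.
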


The proof of this result requires a few preliminary lemmas. For ease of presentation, we assume that the hypotheses of Theorem \ref{thm: IRWA complexity} hold throughout this section.  Thus, in particular, Corollary \ref{cor:compactness} and the strict convexity and coercivity of $J$ tells us that there exists $\tau>0$ such that
\beq\label{tau}
  \tnorm{x^k-x^\eps} \le \tau\quad\mbox{for all}\ k\in\bN,
\eeq
where $x^\eps$ is the solution to $\min_{x\in\Rn} J(x,\eps)$.  Let $w_i$ for $i\in\cI$ and $\tA$ be given as in \eqref{w} and \eqref{tA}, respectively.  In addition, define
\[
  \begin{aligned}
    R_i(r_i)&:=\frac{r_i}{\sqrt{\tnorm{r_i}^2+\eps_i^2}},\quad r_i(x):=(I-P_{C_i})(A_ix+b_i) \mbox{ for } i \in \cI\\ 
    \mbox{and}\quad      
    u(x,\eps)&:=\nabla\varphi(x)+\sum_{i\in\cI}w_i(x,\eps)A_i^Tr_i(x).
  \end{aligned}
\]
Recall that
\[
  \begin{aligned}
     &\ \partial_x J(x,\epsilon) \\
    =&\ \nabla\varphi(x) + \sum_{i\notin \cA(Ax+b)}w_i(x,\epsilon) A^T_i r_i(x) + \sum_{i\in \cA(Ax+b)} w_i(x,\epsilon) A^T_i  ( \bB_2 \cap N(A_ix+b_i|C_i)),
  \end{aligned}
\]
so that $u(x,\epsilon) \in \partial_x J (x,\epsilon)$.  It is straightforward to show that, for each $i\in\cI$, we have
\[
  \nabla_{r_i}R_i(r_i)=\frac{1}{\sqrt{\tnorm{r_i}^2+\eps_i^2}} \left(I-\frac{r_ir_i^T}{\tnorm{r_i}^2+\eps_i^2}\right)
\]
so that 
\beq\label{R lip}
  \tnorm{\nabla_{r_i}R_i(r_i)}\le1/\eps_i\quad \forall\, r_i.
\eeq
Consequently, for each $i \in \cI$, the function $R_i$ is globally Lipschitz continuous with Lipschitz constant
$1/\eps_i$. This allows us to establish a similar result for the mapping $u(x,\eps)$
as a function of $x$, which we prove as our next result. For convenience, we use
\[
\bu:=u(\bx,\eps),\ \hu:=u(\hx,\eps),\mbox{ and }u^k:=u(x^k,\eps),
\]
and similar shorthand for $w_i(x,\eps_i)$, $W(x,\eps)$, and $r_i(x)$.

\begin{lemma}\label{u lip}
  Let the hypotheses of Theorem \ref{thm: IRWA complexity} hold.  Moreover, let $\lam$ be the largest eigenvalue of $H$ and $\sig_1$ be an upper bound on all singular values of the matrices $A_i$ for $i\in\cI$.  Then, as a function of $x$, the mapping $u(x,\eps)$ is globally Lipschitz continuous with Lipschitz constant $\beta:=\lam+l\sig_1^2/\tveps$.
\end{lemma}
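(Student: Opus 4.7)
The plan is to exploit the decomposition $u(x,\eps)=\nabla\varphi(x)+\sum_{i\in\cI}A_i^T R_i(r_i(x))$, where I use that $w_i(x,\eps)r_i(x)=R_i(r_i(x))$ by the definitions of $w_i$ and $R_i$. Then the Lipschitz constant of $u(\cdot,\eps)$ can be bounded by the sum of the Lipschitz constants of the individual summands.

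First, $\nabla\varphi(x)=g+Hx$ is Lipschitz in $x$ with constant equal to the largest eigenvalue $\lam$ of $H$. This contributes the $\lam$ term in $\beta$. Next, I handle each term $A_i^T R_i(r_i(x))$ as a composition. The outer map $R_i$ is globally Lipschitz with constant $1/\eps_i\le 1/\tveps$ by \eqref{R lip}. The inner map $r_i(x)=(I-P_{C_i})(A_ix+b_i)$ is Lipschitz in $x$ with constant $\sig_1$: indeed, by Theorem \ref{projection theorem}(2), $I-P_{C_i}$ is non-expansive, and the affine map $x\mapsto A_ix+b_i$ has Lipschitz constant bounded by $\sig_1$ (the uniform upper bound on the singular values of the $A_i$). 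Composing gives that $R_i\circ r_i$ is Lipschitz in $x$ with constant $\sig_1/\tveps$, and multiplying on the left by $A_i^T$ (whose operator norm is at most $\sig_1$) gives a Lipschitz constant of $\sig_1^2/\tveps$ for $A_i^T R_i(r_i(\cdot))$.

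Summing over the $l$ indices $i\in\cI$ and adding the contribution from $\nabla\varphi$, I obtain that $u(\cdot,\eps)$ is Lipschitz with constant at most
\[
\lam+\sum_{i\in\cI}\frac{\sig_1^2}{\tveps}=\lam+\frac{l\sig_1^2}{\tveps}=\beta,
\]
as required.

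No step here is particularly delicate; the only thing to double-check is the identification $w_i(x,\eps)r_i(x)=R_i(r_i(x))$, which is immediate from the definitions of $w_i$ in \eqref{w} and of $R_i$ above. The use of the non-expansiveness of $I-P_{C_i}$ from Theorem \ref{projection theorem}(2) is the only non-trivial ingredient, and it has already been established.
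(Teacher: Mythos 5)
Your proof is correct and follows essentially the same route as the paper: both decompose $u$ as $\nabla\varphi$ plus the sum of the maps $A_i^TR_i(r_i(\cdot))$, bound the inner map via the non-expansiveness of $I-P_{C_i}$ from Theorem \ref{projection theorem}(2) together with the singular-value bound on $A_i$, and use \eqref{R lip} for the outer map. The identification $w_i(x,\eps)r_i(x)=R_i(r_i(x))$ that you flag is indeed immediate since $\tnorm{r_i(x)}=\tdist{A_ix+b_i}{C_i}$.
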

\begin{proof}
  By Theorem \ref{projection theorem}, for all $\{\bx,\hx\}\subset\Rn$, we have
  \beq\label{r lip}
    \tnorm{\br_i-\hr_i}\le\tnorm{A_i(\bx-\hx)}\le \sig_1\tnorm{\bx-\hx}.
  \eeq
  Therefore,
  \begin{eqnarray*}
    \tnorm{\bu-\hu}
      &=  & \tnorm{H(\bx-\hx)+\sum_{i\in\cI}A_i^T(R_i(\br_i)-R_i(\hr_i))} \\ 
      &\le& \tnorm{H}\tnorm{\bx-\hx}+\tfrac{1}{\teps}\sum_{i\in\cI}\tnorm{A_i}\tnorm{\br_i-\hr_i} \\
      &\le& \tnorm{H}\tnorm{\bx-\hx}+\tfrac{1}{\teps}\sum_{i\in\cI}\tnorm{A_i}^2\tnorm{\bx-\hx} \\ 
      &\le& (\lam+l\sig_1^2/\tveps)\tnorm{\bx-\hx},
  \end{eqnarray*}
  where the first inequality follows from \eqref{R lip}, the second from \eqref{r lip}, and the last from the fact that the $2$-norm of a matrix equals its largest singular value.
\end{proof}

By Lemma \ref{u lip} and the subgradient inequality, we obtain the bound
\beq\label{q bd for J}
  0\le J(\bx,\eps)-J(\hx,\eps)-\ip{\hu}{\bx-\hx} \le \ip{\bu-\hu}{\bx-\hx}\le \beta\tnorm{\bx-\hx}^2.
\eeq
Moreover, by Part (5) of Lemma \ref{irwa properties}, we have
\[
  -\tA^T W_k q^k = - \tA^T W_k \tA( x^{k+1} - x^k) = u^k \in \partial_x J(x^k, \eps).
\]
If we now define $D_k:=\tA^T W_k \tA$, then $x^k-x^{k+1}=D_k^{-1}u^k$ and
\beq\label{qux}
  (q^k)^TW_kq^k = (x^k-x^{k+1})^TD_k(x^k-x^{k+1}) = (u^k)^TD_k^{-1}u^k.
\eeq
This gives the following bound on the decrease in $J$ when going from $x^k$ to $x^{k+1}$.

\begin{lemma}\label{decrease in J}
  Let the hypotheses of Lemma \ref{u lip} hold. Then,
  \[
    J(x^{k+1}, \eps) - J(x^k, \eps) \le - \alpha \|u^k\|_2^2,
  \]
  where $\alf:=\tveps/(2\sig_0^2)$ with $\sig_0$ the largest singular value of $\tA$.
\end{lemma}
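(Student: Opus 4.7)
The plan is to combine the descent estimate \eqref{J descent} from Lemma~\ref{Gfun} with the identity \eqref{qux} and then control the quadratic form $(u^k)^T D_k^{-1} u^k$ from below by $\|u^k\|_2^2$ times an appropriate constant. Specializing \eqref{J descent} to $\tx = x^k$, $\hx = x^{k+1}$, and $\heps = \teps = \eps$, we obtain
\[
  J(x^k,\eps) - J(x^{k+1},\eps) \;\ge\; \thalf (q^k)^T W_k q^k \;=\; \thalf (u^k)^T D_k^{-1} u^k,
\]
where the second equality is exactly \eqref{qux}. Thus the lemma reduces to the eigenvalue bound
\[
  (u^k)^T D_k^{-1} u^k \;\ge\; \frac{\tveps}{\sig_0^2}\,\tnorm{u^k}^2.
\]

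To establish this, I would use $(u^k)^T D_k^{-1} u^k \ge \tnorm{u^k}^2/\lambda_{\max}(D_k)$ and bound $\lambda_{\max}(D_k)$ from above. Since $D_k = \tA^T W_k \tA$ with $W_k$ diagonal and positive, the submultiplicative bound on the spectral norm gives $\lambda_{\max}(D_k) \le \lambda_{\max}(W_k)\,\tnorm{\tA}_2^2 = \lambda_{\max}(W_k)\,\sig_0^2$. It then suffices to show $\lambda_{\max}(W_k) \le 1/\tveps$. For each $i \in \cI$, the definition in \eqref{w} yields
\[
  w_i^k \;=\; \bigl(\tnorm{r_i^k}^2 + \eps_i^2\bigr)^{-1/2} \;\le\; \frac{1}{\eps_i} \;\le\; \frac{1}{\tveps},
\]
while $w_0^k = 1 \le 1/\tveps$ (using the implicit regime $\tveps \le 1$ under the standing hypotheses \eqref{eps choice}, which are in force throughout this section). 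Plugging everything back in delivers the claimed inequality with $\alf = \tveps/(2\sig_0^2)$.

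The only delicate point—really the main thing to verify—is the bound $\lambda_{\max}(W_k) \le 1/\tveps$; this is why $\tveps = \min_{i\in\cI}\eps_i$ enters the constant $\alf$ rather than some other measure of $\eps$. Once that bound is in hand, the lemma is a one-line consequence of \eqref{J descent}, \eqref{qux}, and the standard spectral estimate for a quadratic form in the inverse of a symmetric positive definite matrix.
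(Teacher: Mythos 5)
Your proposal is correct and follows essentially the same route as the paper: the descent bound \eqref{J descent} combined with the identity \eqref{qux}, followed by the spectral estimate $\lambda_{\max}(D_k)\le \lambda_{\max}(W_k)\tnorm{\tA}_2^2\le \sig_0^2/\tveps$. The one point you flag — that the block $w_0^k=1$ forces the implicit assumption $\tveps\le 1$ for the bound $\lambda_{\max}(W_k)\le 1/\tveps$ — is also present (silently) in the paper's own argument, so your treatment is, if anything, slightly more careful.
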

\begin{proof}
  By Lemma~\ref{Gfun} and \eqref{qux}, we have
  \[
    J(x^{k+1}, \eps) - J(x^k, \eps) \le - \thalf (q^k)^TW_k q^k = -\thalf (u^k)^T D_k^{-1} u^k.
  \]
  Since the $\tnorm{D_k}\le \|W^{1/2}_k\|_2^2\|\tA\|_2^2$, we have that the largest eigenvalue of $D_k$ is bounded above by $\sigma_0^2/\tveps$.  This implies $\thalf(u^k)^T D_k^{-1}u^k \ge \alf\tnorm{u^k}^2$, which gives the result.
\end{proof}

The following theorem is the main tool for proving Theorem \ref{thm: IRWA complexity}.

\begin{theorem}\label{main complexity theorem}
  Let the hypotheses of Lemma \ref{decrease in J} hold, and, as in \eqref{tau}, let $x^\eps$ be the solution to $\min_{x\in\Rn} J(x,\eps)$.  Then,
  \beq\label{basic complexity bound}
    J(x^k, \epsilon)-J(x^\epsilon, \epsilon) \le \frac{32l^2\sig_0^2\tau^2}{k\veps}\left[\frac{\tnorm{u^\eps}\veps+\tau(\lam\veps +l\sig_1^2)}{\tnorm{u^\eps}\veps+\tau(\lam\veps +4l^2\sig_1^2)+8l\tau\sig_0^2/k}\right].
  \eeq
  Therefore, IRWA requires $O(1/\veps^2)$ iterations to reach $\veps$-optimality for $J(x,\eps)$, i.e., 
  \[
    J(x^k,\epsilon) - J(x^\eps,\eps) \le \veps.
  \]
\end{theorem}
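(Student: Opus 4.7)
The plan is to define $\Delta_k:=J(x^k,\eps)-J(x^\eps,\eps)\ge 0$ and to convert the linear-in-$\tnorm{u^k}^2$ descent of Lemma~\ref{decrease in J} into a quadratic-in-$\Delta_k$ descent. Since $u^k\in\sd_x J(x^k,\eps)$ and $x^\eps$ minimizes $J(\cdot,\eps)$, convexity together with \eqref{tau} yields $\Delta_k\le\ip{u^k}{x^k-x^\eps}\le\tau\tnorm{u^k}$, so $\tnorm{u^k}^2\ge \Delta_k^2/\tau^2$. Combined with Lemma~\ref{decrease in J}, this produces the one-step recursion $\Delta_{k+1}\le \Delta_k-(\alpha/\tau^2)\Delta_k^2$, which I would invert in the standard way: provided $(\alpha/\tau^2)\Delta_k<1$, one has $1/\Delta_{k+1}\ge 1/\Delta_k+\alpha/\tau^2$, and telescoping gives $\Delta_k\le \tau^2\Delta_0/(\tau^2+k\alpha\Delta_0)$. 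Admissibility of the inversion at every step follows from the monotonicity of $\{\Delta_k\}$ (cf.\ Lemma~\ref{irwa properties}); the edge case $(\alpha/\tau^2)\Delta_0\ge 1$ is harmless because the quadratic $\Delta_k(1-(\alpha/\tau^2)\Delta_k)$ is itself bounded above by $\tau^2/(4\alpha)$, so a single descent step places $\Delta_k$ in the admissible regime at the cost of an absolute-constant shift in the iteration index.

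Next I would bound $\Delta_0$ using Lemma~\ref{u lip}. Since $u(\cdot,\eps)=\nabla_x J(\cdot,\eps)$ is $\beta$-Lipschitz, the descent lemma centered at $x^\eps$ gives $\Delta_0\le \tau\tnorm{u^\eps}+(\beta/2)\tau^2$. Substituting $\alpha=\tveps/(2\sig_0^2)$ and $\beta=\lam+l\sig_1^2/\tveps$ into the telescoped bound, and then invoking the hypotheses \eqref{eps choice} by replacing $\tveps$ with the lower bound $\veps/(4l)$ in the denominator and with the upper bound $\onorm{\eps}\le\veps/2$ in the numerator, one can rearrange $\tau^2\Delta_0/(\tau^2+k\alpha\Delta_0)$ into exactly the form \eqref{basic complexity bound}. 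The $O(1/\veps^2)$ complexity conclusion is then immediate: the leading factor $32l^2\sig_0^2\tau^2/(k\veps)$ scales as $1/(k\veps)$ while the bracketed ratio is bounded by a constant as $k\to\infty$, so $\Delta_k\le\veps$ is guaranteed once $k=O(l^2\sig_0^2\tau^2/\veps^2)$.

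The principal difficulty lies in the algebraic bookkeeping of the last substitution: keeping track of which occurrences of $\tveps$ should be traded for $\veps/(4l)$ versus $\veps/2$ so that $\tnorm{u^\eps}$ appears symmetrically in the numerator and denominator of the bracketed factor of \eqref{basic complexity bound}. A minor subtlety is that $J(\cdot,\eps)$ is smooth and $X=\Rn$, so in fact $u^\eps=0$ and the $\tnorm{u^\eps}$ terms collapse; the displayed formula simply records the bound without exploiting this optimality identity, which keeps the statement parallel to analyses in settings with constraints or active nonsmoothness at $x^\eps$.
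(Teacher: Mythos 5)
Your proposal is correct and follows essentially the same route as the paper's proof: the quadratic recursion $\Delta_{k+1}\le\Delta_k-(\alpha/\tau^2)\Delta_k^2$ obtained from Lemma~\ref{decrease in J} and the convexity bound $\Delta_k\le\tau\tnorm{u^k}$, inversion and telescoping to get $\Delta_k\le\tau^2\Delta_0/(\tau^2+k\alpha\Delta_0)$, a Lipschitz-gradient bound on $\Delta_0$, and substitution of $\alpha$, $\beta$ together with \eqref{eps choice}. The only cosmetic differences are that the paper sidesteps your admissibility condition by dividing the recursion by $\delta^{j+1}\delta^j$ and using monotonicity directly, and its bound \eqref{q bd for J} gives $\delta^0\le\tau(\tnorm{u^\eps}+\beta\tau)$ rather than your slightly tighter $\tau\tnorm{u^\eps}+(\beta/2)\tau^2$, neither of which affects the argument.
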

\begin{proof}
  Set $\delta^j := J(x^j, \epsilon)-J(x^\epsilon, \epsilon)$ for all $j\in\bN$.  Then, by Lemma \ref{decrease in J},
  \beq\label{eq.rate.1} 
    \begin{aligned}
      0 \le \delta^{j+1} 
      &=  J(x^{j+1}, \epsilon) - J(x^\epsilon, \epsilon) \\
      &\le J(x^j, \epsilon)-J(x^\epsilon, \epsilon) - \alpha \| u^j\|_2^2 = \delta^j - \alpha \| u^j\|_2^2 \le\del^j.
    \end{aligned}
  \eeq
   If for some $j < k$ we have $\delta^j = 0$, then \eqref{eq.rate.1} implies that $\delta^k = 0$ and $u^k=0$, which in turn implies that $x^{k+1}=x^\eps$ and the bound \eqref{basic complexity bound} holds trivially.  In the remainder of the proof, we only consider the nontrivial case where $\delta^j > 0$ for $j=0,...,k-1$. 

  Consider $j \in \{0,\dots,k-1\}$.  By the convexity of $J$ and \eqref{tau}, we have
  \[
    \delta^j = J(x^j, \epsilon) - J(x^\epsilon, \epsilon) \le (u^j)^T(x^j - x^\epsilon) \le \|u^j\|_2\|x^j-x^\epsilon\| \le \tau \|u^j\|_2.
  \]
  Combining this with \eqref{eq.rate.1}, gives  
  \[
    \delta^{j+1} \le \delta^j - \frac{\alpha}{\tau^2} (\delta^j)^2.
  \]
  Dividing both sides by $\delta^{j+1} \delta^j$ and noting that $\frac{\delta^j}{\delta^{j+1}} \ge 1$ yields 
  \beq\label{eq.rate.2}  
    \frac{1}{\delta^{j+1}} - \frac{1}{\delta^j} \ge \frac{\alpha}{\tau^2} \frac{\delta^j}{\delta^{j+1}} \ge \frac{\alpha}{\tau^2}.
  \eeq
  Summing both sides of \eqref{eq.rate.2} from 0 to $k-1$, we obtain
  \beq\label{eq.rate.3} 
    \frac{1}{\delta^k} \ge \frac{\alpha k}{\tau^2} + \frac{1}{\delta^0} = \frac{\alpha\delta^0 k+\tau^2}{\delta^0\tau^2},
  \eeq
  or, equivalently,
  \beq\label{eq.rate.4} 
    \delta^k\le  \frac{\delta^0\tau^2}{\alpha\delta^0 k+\tau^2}.
  \eeq
  The inequality \eqref{q bd for J} implies that
  \[
    \delta^0 = J(x^0, \epsilon)-J(x^\epsilon, \epsilon) \le (u^\epsilon)^T(x^0-x^\epsilon) + \beta \| x^0- x^\epsilon\|_2^2 \le \tau(\|u^\epsilon\|_2+\beta \tau),
  \]
  which, together with \eqref{eq.rate.3}, implies that 
  \[ 
    \frac{\alpha\delta^0k+\tau^2}{\delta^0\tau^2} \ge \frac{\alpha k}{\tau^2} + \frac{1}{\tau(\|u^\epsilon\|_2+\beta \tau)}.
  \]
  Rearranging, one has 
  \[  
    \frac{\tau^2\del^0}{\alpha k\del^0 +\tau^2} \le \frac{\tau^2(\|u^\epsilon\|_2+\beta \tau )}{ \alpha k(\|u^\epsilon\|_2+\beta \tau) + \tau }.
  \]
  Substituting in $\beta= \lam+l\sig_1^2/\tveps$ and $\alf=\tveps/(2\sig_0^2)$ defined in Lemmas~\ref{u lip} and \ref{decrease in J}, respectively, and then combining with \eqref{eq.rate.4} gives
  \[
    \delta^k\le\frac{\tau^2(\tnorm{u^\eps}+\tau(\lam +l\sig_1^2/\tveps))}{(\tveps/(2\sig_0^2))k(\tnorm{u^\eps}+\tau(\lam +l\sig_1^2/\tveps))+\tau} = \frac{2\sig_0^2\tau^2}{k\tveps}\left[\frac{\tnorm{u^\eps}\tveps+\tau(\lam\tveps +l\sig_1^2)}{\tnorm{u^\eps}\tveps+\tau(\lam\tveps +l\sig_1^2)+2\tau\sig_0^2/k}\right].
  \]
  Finally, using the inequalities $\veps\le 4l\tveps$ and $\tveps\le\veps$ (recall \eqref{eps choice}) gives
  \[
    \delta^k \le \frac{32l^2\sig_0^2\tau^2}{k\veps}\left[\frac{\tnorm{u^\eps}\veps+\tau(\lam\veps +l\sig_1^2)}{\tnorm{u^\eps}\veps+\tau(\lam\veps +4l^2\sig_1^2)+8l\tau\sig_0^2/k}\right],
  \]
  which is the desired inequality.
\end{proof}

We can now prove Theorem \ref{thm: IRWA complexity}.
\smallskip

\begin{proof}[Theorem~\ref{thm: IRWA complexity}]
  Let $x^* = \arg\min_{x\in\Rn}J_0(x)$.  Then, by convexity in $\eps$,
  \begin{equation*}
    \begin{aligned}
      J(x^\eps,\eps) - J(x^*,0) 
        &\le [\partial_\eps J(x^\eps,\eps)]^T (\eps-0) \\ 
        &=   \sum_{i\in\cI} \frac{\eps_i^2}{ \sqrt{ \| r_i(x^\eps)\|_2^2 + \eps_i^2} } \le \sum_{i\in\cI} \eps_i =\onorm{\eps}\le \veps/2.
    \end{aligned}
  \end{equation*}
  By Theorem \ref{main complexity theorem}, IRWA needs $O(1/\veps^2)$ iterations to reach 
  \[
    J(x^k,\eps)-J(x^\eps,\eps) \le \veps/2.
  \]
  Combining these two inequalities yields the result.
\end{proof}

\section{An Alternating Direction Augmented Lagrangian Algorithm}\label{sec.adal}

For comparison with IRWA, we now describe an alternating direction augmented Lagrangian method for solving problem \eqref{s-dist}.  This approach, like IRWA, can be solved by matrix-free methods.  Defining
\[
  \hat{J}(x,p) := \varphi(x)+\dist{p}{C},
\]
where $\dist{p}{C}$ is defined as in \eqref{prod dist}, the problem \eqref{s-dist} has the equivalent form
\begin{equation}\label{Jhat}
  \min_{x\in X,p}\ \hat{J}(x,p)\ \mbox{subject to}\ Ax+b=p,
\end{equation}
where $p:=(p_1^T,\dots,p_l^T)^T$.  In particular, note that $J_0(x)=\hJ(x,Ax+b)$.  Defining dual variables $(u_1,\dots,u_l)$, a partial Lagrangian for \eqref{Jhat} is given by
\[
  L(x,p,u) :=  \hat{J}(x,p) + \ip{u}{Ax+b-p}+\indicator{x}{X},
\]
and the corresponding augmented Lagrangian, with penalty parameter $\mu > 0$, is
\[
  L(x,p,u,\mu) := \hat{J}(x,p) + \tfrac{1}{2\mu}\tnorm{Ax + b- p + \mu u}^2 - \tfrac{\mu}{2}\|u\|^2_2+\indicator{x}{X}.
\]
(Observe that due to their differing numbers of inputs, the Lagrangian value $L(x,p,u)$ and augmented Lagrangian value $L(x,p,u,\mu)$ should not be confused with each other, nor with the level set value $L(\alpha,\epsilon)$ defined in Theorem~\ref{thm:boundedness}.)

We now state the algorithm.
\medskip

\underline{\bf{Alternating Direction Augmented Lagrangian Algorithm (ADAL)}}
\begin{enumerate}
  \item[Step 0:] (Initialization)
  Choose an initial point $x^0\in X$, dual vectors $u_i^0 \in \Rmi$ for $i \in \cI$, and penalty parameter $\mu > 0$.  Let $\sigma \geq 0$ and $\sigma'' \geq 0$ be two scalars which serve as termination tolerances for the stepsize and constraint residual, respectively.  Set $k:=0$.
  \item[Step 1:] (Solve the augmented Lagrangian subproblems for $(x^{k+1},p^{k+1})$)\\
  \noindent
  Compute a solution $p^{k+1}$ to the problem
  \[
    \cL_p{(x^k,p,u^k,\mu)}:\quad \min_{p} L(x^k,p,u^k,\mu),
  \]
  and a solution $x^{k+1}$ to the problem
  \[
    \cL_x{(x,p^{k+1},u^k,\mu)}:\quad \min_{x } L(x,p^{k+1},u^k,\mu).
  \]
  \item[Step 2:] (Set the new multipliers $u^{k+1}$)\\
  \noindent
  Set 
  \[
    u^{k+1} := u^k + \tfrac{1}{\mu}(Ax^{k+1} + b - p^{k+1}).
  \]
  \item[Step 3:] (Check stopping criteria)\\
  \noindent
  If $\norm{x^{k+1} - x^k}_2 \leq \sigma$ and $\dnorm{Ax^{k+1} + b - p^{k+1}} \leq \sigma''$, then stop; else, set $k := k+1$ and go to Step 1.
\end{enumerate}

\begin{rem}
  As for IRWA, one can also base the stopping criteria of Step~3 on a percent reduction in duality gap; recall Remark~\ref{rem duality gap}.
\end{rem}

\subsection{Properties of $\cL_p{(x,p,u,\mu)}$ and $\cL_x{(x,p,u,\mu)}$}

Before addressing the convergence properties of the ADAL algorithm, we discuss properties of the solutions to the subproblems $\cL_p{(x,p,u,\mu)}$ and $\cL_x{(x,p,u,\mu)}$.

The subproblem $\cL_p{(x^k,p,u^k,\mu)}$ is separable.  Defining
\begin{equation*}
  s^k_i := A_ix^k + b_i + \mu u^k_i\quad\forall\, i \in \cI,
\end{equation*}
the solution of $\cL_p{(x^k,p,u^k,\mu)}$ can be written explicitly, for each $i \in \cI$, as
\begin{equation}\label{defn pk}
  p^{k+1}_i := \begin{cases} P_{C_i}(s^k_i) & \text{if $\tdist{s^k_i}{C_i} \leq \mu$} \\ s^k_i - \frac{\mu}{\tdist{s^k_i}{C_i}}(s^k_i - P_{C_i}(s^k_i)) & \text{if $\tdist{s^k_i}{C_i} > \mu$.} \end{cases}
\end{equation}
Subproblem $\cL_x{(x,p^{k+1},u^k,\mu)}$, on the other hand, involves the minimization of a convex quadratic over $X$, which can be solved by matrix-free methods.

Along with the dual variable estimates $\{u_i^k\}$, we define the auxiliary estimates
\begin{equation*}
 \hu^{k+1} := u^{k+1} - \tfrac{1}{\mu}q^k,
 \quad\mbox{where}\quad
 q^k := A(x^{k+1}  - x^k)
 \quad\mbox{as in IRWA Step 2.}
\end{equation*}
First-order optimality conditions for \eqref{Jhat} are then given by
\begin{subequations}\label{eq.kkt}
  \begin{align}
    0 & \in \partial \dist{p}{C} - u, \label{eq.dualfeas2}\\
    0 &\in  \nabla \varphi(x) + A^Tu + N(x|X), \label{eq.dualfeas1} \\
    0 &= Ax + b - p, \label{eq.primfeas} 
  \end{align}
\end{subequations}
or, equivalently,
\[
  0\in\sd J_0(x)=\nabla\varphi(x)+A^T\sd \dist{\cdot}{C}(Ax+b)+\ncone{x}{X}.
\]
The next lemma relates the iterates and these optimality conditions.

\begin{lemma}\label{lem:adal 1}
  Suppose that the sequence $\{(x^k,p^k,u^k)\}$ is generated by ADAL with initial point $x^0 \in X$.  Then, for all $k \in \bN$, we have
  \begin{equation}\label{adal 1}
    \hu^{k+1}\in\sd \dist{p^{k+1}}{C}
    \ \mbox{and}\ 
    -\tfrac{1}{\mu}A^Tq^k \in \nabla \varphi(x^{k+1})  +A^T\hu^{k+1}+\ncone{x^{k+1}}{X}.
  \end{equation}
  Therefore,
  \begin{equation*}\label{adal 2}
    -\tfrac{1}{\mu}A^Tq^k \in \nabla \varphi(x^{k+1})  +A^T\sd \dist{p^{k+1}}{C}+\ncone{x^{k+1}}{X}.
  \end{equation*}
  Moreover, for all $k\geq1$, we have
  \begin{equation}\label{adal 3}
    \dnorm{\hu^{k}}\le 1,\quad \dnorm{s^{k}}\le\mu,\ \mbox{ and }\ \dnorm{p^{k}}\le\hmu,
  \end{equation}
  where $\hmu:=\max\{\mu,\sup_{\dnorm{s}\le\mu}\dnorm{P_C(s)}\}<\infty$.
\end{lemma}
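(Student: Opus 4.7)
The plan is to establish all three inclusions by reading off the first-order optimality conditions of the two subproblems solved in Step~1 of ADAL, and then to read off the three norm bounds from what has just been proved together with the explicit componentwise expression \eqref{defn pk} for $p^{k+1}$.

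First, since $\cL_p(x^k,p,u^k,\mu)$ is an unconstrained minimization in $p$, optimality of $p^{k+1}$ gives
\[
  0 \in \sd\dist{p^{k+1}}{C} - u^k - \tfrac{1}{\mu}(Ax^k + b - p^{k+1}).
\]
Using the multiplier update $u^{k+1} = u^k + \tfrac{1}{\mu}(Ax^{k+1}+b-p^{k+1})$ together with the definition $\hu^{k+1} = u^{k+1} - \tfrac{1}{\mu}q^k$ and $q^k = A(x^{k+1}-x^k)$, the quantity $u^k + \tfrac{1}{\mu}(Ax^k+b-p^{k+1})$ simplifies exactly to $\hu^{k+1}$, which establishes the first membership in \eqref{adal 1}. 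Similarly, optimality of $x^{k+1}$ for $\cL_x(x,p^{k+1},u^k,\mu)$ over $X$ gives
\[
  0 \in \nabla\varphi(x^{k+1}) + \tfrac{1}{\mu}A^T(Ax^{k+1}+b-p^{k+1}+\mu u^k) + \ncone{x^{k+1}}{X} = \nabla\varphi(x^{k+1}) + A^Tu^{k+1} + \ncone{x^{k+1}}{X}.
\]
Substituting $u^{k+1} = \hu^{k+1} + \tfrac{1}{\mu}q^k$ and rearranging yields the second inclusion in \eqref{adal 1}. The subsequent displayed inclusion is then immediate by combining the two parts of \eqref{adal 1}.

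For the bounds in \eqref{adal 3}, the estimate $\dnorm{\hu^k}\le 1$ follows from what was just shown together with \eqref{sd prod dist}--\eqref{sd prod dist detail}, which imply that every element of $\sd\dist{\cdot}{C}$ has each block lying in $\bB_2$, i.e., dual norm at most one. For the remaining bounds one inspects \eqref{defn pk}: in both cases of that formula, $\tnorm{s^k_i - p^{k+1}_i}\le \mu$ componentwise (equal to $\tdist{s^k_i}{C_i}$ in the first case and equal to $\mu$ in the second), which gives the middle bound; and since $p^{k+1}_i$ is either $P_{C_i}(s^k_i)$ or lies on the segment from $P_{C_i}(s^k_i)$ in the direction toward $s^k_i$ at distance $\mu$ from $P_{C_i}(s^k_i)$, its norm is controlled by $\dnorm{P_C(s^k)}$ together with $\mu$, giving $\dnorm{p^k}\le \hmu$ with $\hmu$ as defined.

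The main obstacle is bookkeeping: the $p$-subproblem optimality condition is written in terms of $u^k$ and $p^{k+1}$ but $Ax^k$ appears, not $Ax^{k+1}$, so one must carefully use both the multiplier update and the definition of $\hu^{k+1}$ to convert that expression into the exact form $\hu^{k+1}\in\sd\dist{p^{k+1}}{C}$. Once that algebraic identification is made, everything else is routine, and in particular the identity $\mu\hu^{k+1} = s^k - p^{k+1}$ that falls out of the argument lets the norm bounds be read directly off the proximal formula \eqref{defn pk}.
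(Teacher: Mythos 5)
Your handling of the two inclusions in \eqref{adal 1}, of the combined inclusion, and of the bound $\dnorm{\hu^{k}}\le 1$ is correct and essentially identical to the paper's argument: read off the optimality conditions of the two subproblems, substitute the multiplier update, and invoke \eqref{sd prod dist}--\eqref{sd prod dist detail}.

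The gap is in the last two bounds of \eqref{adal 3}. Your own (correct) algebra gives $\mu\hu^{k+1}=s^k-p^{k+1}$, and the case analysis of \eqref{defn pk} gives $\tnorm{s_i^k-p_i^{k+1}}\le\mu$ for every $i$; but that is the statement $\dnorm{s^k-p^{k+1}}\le\mu$, not the stated middle bound $\dnorm{s^k}\le\mu$, and the step from one to the other is a non sequitur: $s^k=Ax^k+b+\mu u^k$ carries the full affine residual and is not controlled by $\mu$ alone. (For what it is worth, the paper's own proof of this bound asserts the chain $s^k=\mu u^{k+1}-q^k=\mu\hu^{k+1}$, which drops exactly the $p^{k+1}$ term that your bookkeeping correctly retains; your computation therefore exposes the difficulty rather than resolving it.) The third bound inherits the problem. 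In the second case of \eqref{defn pk} one has $p_i^{k+1}=P_{C_i}(s_i^k)+(1-\mu/\tdist{s_i^k}{C_i})(s_i^k-P_{C_i}(s_i^k))$, so $p_i^{k+1}$ lies at distance $\mu$ from $s_i^k$ and at distance $\tdist{s_i^k}{C_i}-\mu$ from $P_{C_i}(s_i^k)$ --- not at distance $\mu$ from the projection, as you state --- and the quantity $\tdist{s_i^k}{C_i}-\mu$ is not a priori bounded. Moreover, invoking $\sup_{\dnorm{s}\le\mu}\dnorm{P_C(s)}$ to control $\dnorm{P_C(s^k)}$ presupposes $\dnorm{s^k}\le\mu$, which is precisely what has not been established. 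What your derivation legitimately yields is $\dnorm{\hu^{k}}\le 1$ and $\dnorm{s^{k}-p^{k+1}}\le\mu$; the bounds $\dnorm{s^{k}}\le\mu$ and $\dnorm{p^{k}}\le\hmu$ as written do not follow from it.
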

\begin{proof}
  By ADAL Step 1, the auxiliary variable $p^{k+1}$ satisfies
  \[
    0 \in \partial \dist{p^{k+1}}{C} - u^k - \tfrac{1}{\mu} (Ax^k + b - p^{k+1}),
  \]
  which, along with ADAL Step 2, implies that
  \[
    \begin{aligned}
      u^{k+1} 
      &\in \partial \dist{p^{k+1}}{C} + \tfrac{1}{\mu}(Ax^{k+1}  + b - p^{k+1} ) - \frac{1}{\mu} (Ax^k + b - p^{k+1} ) \\
      &=   \partial \dist{p^{k+1}}{C} + \tfrac{1}{\mu} q^k.
    \end{aligned}
  \]
  Hence, the first part of \eqref{adal 1} holds.  Then, again by ADAL Step 1, $x^{k+1}$ satisfies
  \begin{equation*}\label{eq.x}
    0 \in \nabla \varphi(x^{k+1}) + \frac{1}{\mu}A^T( {A} x^{k+1}   + b - p^{k+1}  + \mu u^k ) + N(x^{k+1} |X).
  \end{equation*}
  which, along with ADAL Step 2, implies that
  \begin{equation}\label{opt.x}
    0 \in \nabla \varphi(x^{k+1}) + A^T u^{k+1} + N(x^{k+1}|X).
  \end{equation}
  Hence, the second part of \eqref{adal 1} holds.

  The first bound in \eqref{adal 3} follows from the first part  of \eqref{adal 1}.  The second bound in \eqref{adal 3} follows from the first bound and the fact that for $k\in\bN$ we have
  \[
    s^k=Ax^k+b+\mu u^k=\mu u^{k+1}-q^k=\mu \hu^{k+1}.
  \]
  As for the third bound, note that if, for some $i\in\cI$, we have $\tdist{s^{k-1}_i}{C_i} \leq \mu$, then, by \eqref{defn pk}, we have $\tnorm{p^{k}_i}\le\hmu$; on the other hand, if $\tdist{s^{k-1}_i}{C_i}> \mu$ so that $0<\xi:=\mu/\tdist{s^{k-1}_i}{C_i}<1$, then, by \eqref{defn pk} and the second bound in \eqref{adal 3},
  \[
    \tnorm{p^{k}_i}\le (1-\xi)\tnorm{s^{k-1}_i}+\xi \hmu\le\hmu.
  \]
  Consequently, $\dnorm{p^{k}}=\sup_{i\in\cI}\tnorm{p^{k}_i}\le\hmu$.
\end{proof}

For the remainder of our discussion of ADAL, we define the residuals
\begin{equation*}
  z^{k+1} := Ax^{k+1} + b - p^{k+1}.
\end{equation*}
Lemma \ref{lem:adal 1} tells us that the deviation of $(p^{k+1},\hu^{k+1})$ from satisfying the first-order optimality conditions for \eqref{eq.kkt} can be measured by
\begin{equation}
  E^{k+1} =\max \{\norm{q^k}, \dnorm{z^{k+1}}\}.
\end{equation}

\subsection{Convergence of ADAL}

In this section, we establish the global convergence properties of the ADAL algorithm.  The proofs in this section are standard for algorithms of this type (e.g., see \cite{BPCPE11}), but we include them for the sake of completeness.  We make use of the following standard assumption.

\begin{assu}\label{adal assumption}
  There exists a point $(x^*,p^*,u^*)$ satisfying \eqref{eq.kkt}.
\end{assu}

Since \eqref{Jhat} is convex, this assumption is equivalent to the existence of a minimizer.  Notice that $(x^*, p^*)$ is a minimizer of the convex function $L(x,p,u^*)$ over $X$.  We begin our analysis by providing useful bounds on the optimal primal objective value.

\begin{lemma}
  Suppose that the sequence $\{(x^k,p^k,u^k)\}$ is generated by ADAL with initial point $x^0 \in X$.  Then, under Assumption \ref{adal assumption}, we have for all $k\in\bN$ that
  \begin{equation}\label{eq.upper}
   (u^*)^T z^{k+1}\ge   \hat J(x^*,p^*) - \hat J(x^{k+1} ,p^{k+1}) \geq (u^{k+1} )^T z^{k+1}  - \frac{1}{\mu} (q^k)^T (p^* - p^{k+1} ).
  \end{equation}
\end{lemma}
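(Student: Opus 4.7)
My plan is to treat the two inequalities separately, using the Lagrangian interpretation for the upper bound and the subgradient inequalities from Lemma~\ref{lem:adal 1} for the lower bound.

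For the upper bound, I would invoke Assumption~\ref{adal assumption}: since $(x^*,p^*,u^*)$ satisfies \eqref{eq.kkt} and the problem is convex, $(x^*,p^*)$ minimizes the (partial) Lagrangian $L(\cdot,\cdot,u^*)$ over $X\times\R^m$. Since $Ax^*+b=p^*$ by primal feasibility \eqref{eq.primfeas}, one has $L(x^*,p^*,u^*)=\hat J(x^*,p^*)$, while $L(x^{k+1},p^{k+1},u^*)=\hat J(x^{k+1},p^{k+1})+(u^*)^Tz^{k+1}$ (using $x^{k+1}\in X$ so that the indicator vanishes). Combining these two with $L(x^*,p^*,u^*)\le L(x^{k+1},p^{k+1},u^*)$ and rearranging yields $\hat J(x^*,p^*)-\hat J(x^{k+1},p^{k+1})\le (u^*)^Tz^{k+1}$.

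For the lower bound, I would combine the subgradient inequalities implied by Lemma~\ref{lem:adal 1}. Since $\hu^{k+1}\in\partial\dist{p^{k+1}}{C}$, convexity gives
\[
\dist{p^*}{C}-\dist{p^{k+1}}{C}\ \ge\ (\hu^{k+1})^T(p^*-p^{k+1}).
\]
From the second inclusion in \eqref{adal 1}, there exists $n^{k+1}\in\ncone{x^{k+1}}{X}$ with
\[
-\tfrac{1}{\mu}A^Tq^k=\nabla\varphi(x^{k+1})+A^T\hu^{k+1}+n^{k+1},
\]
and since $x^*\in X$, we have $\langle n^{k+1},x^*-x^{k+1}\rangle\le 0$. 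Then convexity of $\varphi$ gives
\[
\varphi(x^*)-\varphi(x^{k+1})\ \ge\ \nabla\varphi(x^{k+1})^T(x^*-x^{k+1})\ =\ -\bigl(\tfrac{1}{\mu}A^Tq^k+A^T\hu^{k+1}+n^{k+1}\bigr)^T(x^*-x^{k+1}).
\]
Adding the two subgradient inequalities and dropping the non-positive normal-cone term, I obtain
\[
\hat J(x^*,p^*)-\hat J(x^{k+1},p^{k+1})\ \ge\ -\tfrac{1}{\mu}(q^k)^TA(x^*-x^{k+1})+(\hu^{k+1})^T\bigl[(p^*-p^{k+1})-A(x^*-x^{k+1})\bigr].
\]

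To finish, I would use the primal-feasibility identity $A(x^*-x^{k+1})=(p^*-p^{k+1})-z^{k+1}$ and the definition $\hu^{k+1}=u^{k+1}-\tfrac{1}{\mu}q^k$. Substituting the first identity eliminates the bracket in favor of $z^{k+1}$, and substituting the second identity converts $\hu^{k+1}$ into $u^{k+1}$ while producing a cancelling $\tfrac{1}{\mu}(q^k)^Tz^{k+1}$ term. The resulting expression simplifies exactly to $(u^{k+1})^Tz^{k+1}-\tfrac{1}{\mu}(q^k)^T(p^*-p^{k+1})$, which is the lower bound in \eqref{eq.upper}. The main bookkeeping obstacle will be getting the signs right in this final algebraic reduction; beyond that the argument is routine given Lemma~\ref{lem:adal 1}.
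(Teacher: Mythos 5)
Your proposal is correct and follows essentially the same route as the paper: the upper bound via the saddle-point/minimizer property of $(x^*,p^*)$ for $L(\cdot,\cdot,u^*)$ together with primal feasibility, and the lower bound by adding the subgradient (first-order optimality) inequalities for the two subproblems from Lemma~\ref{lem:adal 1} and then substituting $A(x^*-x^{k+1})=(p^*-p^{k+1})-z^{k+1}$ and $\hu^{k+1}=u^{k+1}-\tfrac{1}{\mu}q^k$. The only cosmetic difference is that the paper rewrites the $x$-optimality condition in terms of $u^{k+1}$ at the outset, whereas you carry $\hu^{k+1}$ through and convert at the end; the algebra is identical.
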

\begin{proof}
  Since $(x^*,p^*,u^*)$ is a saddle point of $L$, it follows that $Ax^* + b - p^* = 0$, which implies by the fact that $x^{k+1}\in X$ that
  \[
    \hat J(x^*,p^*) = L(x^*,p^*,u^*) \leq L(x^{k+1} ,p^{k+1} ,u^*).
  \]
  Rearranging, we obtain the first inequality in \eqref{eq.upper}.

  We now show the second inequality in \eqref{eq.upper}.  Recall that Steps 1 and 2 of ADAL tell us that \eqref{opt.x} holds for all $k\in\bN$.  Therefore, $x^{k+1} $ is first-order optimal for
  \begin{equation*}
    \min_{x\in X }\ \varphi(x) + (u^{k+1})^TAx.
  \end{equation*}
  Since this is a convex problem and $x^*\in X$, we have 
  \begin{equation}\label{eq.weird}
    \varphi(x^*)+(u^{k+1})^TAx^*\ge \varphi(x^{k+1}) + (u^{k+1})^TAx^{k+1} .
  \end{equation}
  Similarly, by the first expression in \eqref{adal 1}, $p^{k+1} $ is first-order optimal for
  \begin{equation*}
    \min_{p }\ \dist{p}{C} - (\hu^{k+1})^Tp.
  \end{equation*}
  Hence, by the convexity of this problem, we have
  \begin{equation}\label{eq.weird2}
    \dist{p^*}{C} - (\hu^{k+1})^Tp^*\ge \dist{p^{k+1}}{C} - (\hu^{k+1})^Tp^{k+1}.
  \end{equation}
  By adding \eqref{eq.weird} and \eqref{eq.weird2}, we obtain
  \begin{equation*} 
    \begin{aligned}
         &\ \hat J(x^*,p^*) - \hat J(x^{k+1} , p^{k+1} ) \\
      \ge&\ (\hu^{k+1})^T(p^*-p^{k+1} )+(u^{k+1})^TA(x^{k+1} -x^*) \\
        =&\ (u^{k+1})^T(p^*-p^{k+1} )-\tfrac{1}{\mu}(q^k  )^T (p^* - p^{k+1} )+(u^{k+1})^TA(x^{k+1} -x^*) \\
        =&\ (u^{k+1})^T\left((p^*-Ax^{*})-b)-(p^{k+1}-Ax^{k+1}-b) \right)-\tfrac{1}{\mu}(q^k  )^T (p^* - p^{k+1}) \\
        =&\ (u^{k+1} )^T z^{k+1} - \tfrac{1}{\mu} (q^k)^T (p^* - p^{k+1}),
    \end{aligned}
  \end{equation*}
  which completes the proof.
\end{proof}
 
Consider the distance measure to $(x^*,u^*)$ defined by
\begin{equation*}
  \om^k := \tfrac{1}{\mu}\tnorm{A(x^k-x^*)}^2 + \mu\tnorm{u^k-u^*}^2.
\end{equation*}
In our next lemma, we show that this measure decreases monotonically. 

\begin{lemma}\label{lem.ubounded}
  Suppose that the sequence $\{(x^k,p^k,u^k)\}$ is generated by ADAL with initial point $x^0 \in X$.  Then, under Assumption \ref{adal assumption} holds, we have for all $k \geq 1$ that 
  \begin{equation}\label{eq.phibd}
    \tfrac{1}{\mu} (\tnorm{z^{k+1} }^2 + \tnorm{q^k}^2) + 2(x^{k+1} -x^k)^T H(x^{k+1} -x^k)  \leq  \om^k - \om^{k+1}.
  \end{equation}
\end{lemma}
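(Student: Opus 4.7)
The plan is first to derive a ``master'' descent inequality of the form
$\om^k - \om^{k+1} \ge \tfrac{1}{\mu}\tnorm{z^{k+1}-q^k}^2 + 2(x^{k+1}-x^*)^T H(x^{k+1}-x^*)$,
and then to replace $\tnorm{z^{k+1} - q^k}^2$ by $\tnorm{z^{k+1}}^2 + \tnorm{q^k}^2$ plus the desired $H(x^{k+1}-x^k)$ quadratic via a bound on the cross term $(z^{k+1})^T q^k$ obtained from the $x$-subproblem optimality conditions at two consecutive iterations.

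\emph{Stage 1 (master inequality).} Two first-order facts drive this step. First, by Assumption~\ref{adal assumption} and Lemma~\ref{lem:adal 1}, both $x^*$ and $x^{k+1}$ satisfy variational inequalities involving $\nabla\varphi + A^T u$ and $\ncone{\cdot}{X}$; evaluating each at the other iterate, adding, and using $\nabla\varphi(x^{k+1}) - \nabla\varphi(x^*) = H(x^{k+1}-x^*)$ yields
\[
  (u^{k+1}-u^*)^T A(x^{k+1}-x^*) + (x^{k+1}-x^*)^T H (x^{k+1}-x^*) \le 0.
\]
Second, the monotonicity of $\sd \dist{\cdot}{C}$ applied to $\hu^{k+1}$ and $u^*$ yields $(\hu^{k+1}-u^*)^T(p^{k+1}-p^*) \ge 0$. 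Using the relations $\hu^{k+1} = u^{k+1} - \tfrac{1}{\mu}q^k$, $p^{k+1}-p^* = A(x^{k+1}-x^*) - z^{k+1}$ (which holds because $p^* = Ax^*+b$), $A(x^{k+1}-x^*) = A(x^k - x^*) + q^k$, and $u^{k+1} = u^k + \tfrac{1}{\mu}z^{k+1}$, I combine the two inequalities. After multiplying through by $2$, the resulting expression collapses precisely to
$\om^{k+1} - \om^k + \tfrac{1}{\mu}\tnorm{z^{k+1}-q^k}^2 + 2(x^{k+1}-x^*)^T H(x^{k+1}-x^*) \le 0$, the master inequality.

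\emph{Stage 2 (cross-term bound).} The hypothesis $k \ge 1$ allows us to invoke the $x$-subproblem variational inequality produced at iteration $k-1$, namely $(\nabla\varphi(x^k) + A^T u^k)^T(x - x^k) \ge 0$ for all $x \in X$. Taking $x = x^{k+1}$ in that inequality and $x = x^k$ in the analogous inequality for $x^{k+1}$, and adding, the identity $\nabla\varphi(x^{k+1}) - \nabla\varphi(x^k) = H(x^{k+1}-x^k)$ together with $u^{k+1} - u^k = \tfrac{1}{\mu}z^{k+1}$ produces
\[
  (z^{k+1})^T q^k \le -\mu (x^{k+1} - x^k)^T H (x^{k+1} - x^k).
\]

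\emph{Stage 3 (assemble).} Expanding $\tnorm{z^{k+1} - q^k}^2 = \tnorm{z^{k+1}}^2 + \tnorm{q^k}^2 - 2(z^{k+1})^T q^k$, substituting the Stage~2 bound, and finally dropping the nonnegative term $2(x^{k+1}-x^*)^T H(x^{k+1}-x^*)$ (using $H \succeq 0$) from the master inequality of Stage~1 yields the claim. I expect the main obstacle to be the bookkeeping in Stage~2: one must carefully distinguish the variational inequality satisfied by $x^k$ (produced at iteration $k-1$) from the one for $x^{k+1}$ (produced at iteration $k$), and recognize that the $u^k$ appearing in the former is the same one that enters the update $z^{k+1} = \mu(u^{k+1}-u^k)$. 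This is precisely why the statement requires $k \ge 1$: for $k = 0$, the initial iterate $x^0$ has no such variational characterization since it is supplied by the user.
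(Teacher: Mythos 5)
Your proof is correct and follows essentially the same route as the paper: a completion-of-squares argument yielding $\om^k-\om^{k+1}\ge\tfrac{1}{\mu}\tnorm{z^{k+1}-q^k}^2$, followed by the cross-term bound $(z^{k+1})^Tq^k\le-\mu(x^{k+1}-x^k)^TH(x^{k+1}-x^k)$ obtained from the $x$-subproblem optimality conditions at consecutive iterations (which is exactly where $k\ge1$ enters). The only cosmetic difference is in Stage 1, where you add the two variational inequalities symmetrically (picking up an extra nonnegative term $2(x^{k+1}-x^*)^TH(x^{k+1}-x^*)$ that you later discard), whereas the paper routes the same two facts through the saddle-point inequality of its preceding lemma.
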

\begin{proof}
  By using the extremes of the inequality \eqref{eq.upper} and rearranging, we obtain
  \begin{equation*}
    (u^{k+1} -u^*)^T z^{k+1} - \tfrac{1}{\mu}(q^k)^T(p^* - p^{k+1}) \leq 0 .
  \end{equation*}
  Since $(x^*,p^*,u^*)$ is a saddle point of $L$, and so $Ax^* + b= p^*$, this implies
  \begin{equation}\label{eq.bd}
    (u^{k+1} - u^*)^T z^{k+1} - \tfrac{1}{\mu}(q^k  )^T z^{k+1}  + 
    \tfrac{1}{\mu}(x^{k+1} -x^k)^T A^T A(x^{k+1}  - x^*)  \leq 0.
    \end{equation}
  The update in Step 2 yields $u^{k+1} = u^k+ \frac{1}{\mu}z^{k+1} $, so we have
  \begin{equation}\label{eq.expand}
    (u^{k+1}  - u^*)^T z^{k+1} = \left[(u^k_i - u^*)^T z^{k+1} + \tfrac{1}{2\mu}\tnorm{z^{k+1}}^2\right] + \tfrac{1}{2\mu}\tnorm{z^{k+1}}^2.
  \end{equation}
  Let us now consider the first grouped term in \eqref{eq.expand}.  From ADAL Step 2, we have $z^{k+1} = \mu(u^{k+1} -u^k)$, which gives
  \begin{align}
    (u^k - u^*)^T z^{k+1}  + \tfrac{1}{2\mu}\tnorm{z^{k+1}}^2
      = &\ \mu(u^k - u^*)^T (u^{k+1} -u^k) + \tfrac{\mu}{2}\|u^{k+1} -u^k\|^2_2 \nonumber \\
      = &\ \mu(u^k - u^*)^T (u^{k+1}  - u^*) - \mu(u^k - u^*)^T (u^{k} - u^*)\nonumber \\
        &\ + \tfrac{\mu}{2}\|(u^{k+1}  - u^*) - (u^k - u^*)\|^2_2 \nonumber \\
      = &\ \tfrac{\mu}{2}(\|u^{k+1}  - u^*\|^2_2 - \|u^k - u^*\|^2_2). \label{eq.1}
  \end{align}
  Adding the final term $\frac{1}{2\mu}\tnorm{z^{k+1}}^2$ in \eqref{eq.expand} to the second and third terms in \eqref{eq.bd},
  \begin{align}
    &\ \tfrac{1}{\mu} \left(\tfrac{1}{2}\tnorm{z^{k+1}}^2 - (q^k  )^T z^{k+1}  
    + (x^{k+1} -x^k)^T A^T A(x^{k+1} -x^*)\right)  \nonumber \\
    =&\ \tfrac{1}{\mu}\left(\tfrac{1}{2}\tnorm{z^{k+1}}^2 - (q^k  )^T z^{k+1}  
    + (x^{k+1} -x^k)^T A^T A((x^{k+1} -x^k) + (x^k - x^*))\right)  \nonumber \\
    =&\ \tfrac{1}{\mu}\left(\tfrac{1}{2}\tnorm{z^{k+1}  - q^k  }^2 + \tfrac{1}{2} \|q^k  \|^2_2 
    + (x^{k+1} -x^k)^T A^T A(x^k - x^*)\right)  \nonumber \\
    =&\ \tfrac{1}{\mu}(\tfrac{1}{2}\tnorm{z^{k+1}  - q^k  }^2 + 
    \tfrac{1}{2} \|A((x^{k+1} -x^*) - (x^k - x^*))\|^2_2 \nonumber\\
       &\ + ((x^{k+1} -x^*) - (x^k - x^*))^T A^T A(x^k - x^*))  \nonumber \\
    =&\ \tfrac{1}{2\mu} \left(\tnorm{z^{k+1}  - q^k  }^2 + 
    \|A(x^{k+1} -x^*)\|^2_2 - \|A(x^k-x^*)\|^2_2\right) \label{eq.2}
  \end{align}
  From \eqref{eq.expand}, \eqref{eq.1}, and \eqref{eq.2}, we have that \eqref{eq.bd} reduces to
  \begin{equation*}
    \om^{k+1}  - \om^k \leq - \frac{1}{\mu} \tnorm{z^{k+1}  - q^k  }^2.
  \end{equation*}
  Since \eqref{opt.x} holds for $k \geq 1$, we have
  \begin{equation*} 
    -(v^{k+1}-v^k) = H(x^{k+1} -x^k) +A^T (u^{k+1}  - u^k),
  \end{equation*} 
  for some $v^{k+1}\in N(x^{k+1}|X)$ and $v^k\in N(x^k|X)$.
  Therefore,
  \begin{align} 
    (u^{k+1} -u^k)^T q^k 
    & = -(v^{k+1}-v^k)^T(x^{k+1}-x^k)-(x^{k+1}-x^k)^TH(x^{k+1}-x^k) \nonumber \\
    & \le -(x^{k+1}-x^k)^TH(x^{k+1}-x^k), \label{eq.herewego}
  \end{align}
  where the inequality follows since the normal cone operator $\ncone{\cdot}{C}$ is a monotone operator \cite{RTRW}.  Using this inequality in the expansion of the right-hand side of \eqref{eq.herewego} along with the equivalence $z^{k+1} = \mu(u^{k+1} -u^k)$, gives
  \begin{equation*} 
    \begin{aligned}
      \om^{k+1}  -\om^k
        &\leq -\tfrac{1}{\mu} \left(\|z^{k+1} \|^2_2 - 2\mu(u^{k+1} -u^k)^T q^k   + \|q^k  \|^2_2\right)  \\
        &\leq -\tfrac{1}{\mu} (\|z^{k+1} \|^2_2 + \|q^k  \|^2_2) 
        + 2(u^{k+1} _i-u^k_i)^T q_i^k   \\
        &\le   -\tfrac{1}{\mu} (\|z^{k+1} \|^2_2 + \|q^k  \|^2_2)-2(x^{k+1}-x^k)^TH(x^{k+1}-x^k),
    \end{aligned}
  \end{equation*} 
  as desired.
\end{proof}

We now state and prove our main convergence theorem for ADAL.

\begin{theorem}\label{thm:adal cvg}
  Suppose that the sequence $\{(x^k,p^k,u^k)\}$ is generated by ADAL with initial point $x^0 \in X$.  Then, under Assumption \ref{adal assumption}, we have
  \[
    \lim_{k\to\infty} q^k = 0 ,\quad \lim_{k\to\infty} z^{k+1}  = 0,\quad \mbox{and so}\quad \lim_{k\to\infty} E^k = 0.
  \]
  Moreover, the sequences $\{u^k\}$ and $\{Ax^k\}$ are bounded and 
  \begin{equation*} 
    \lim_{k\to\infty} \hat J(x^k, p^k) = \hat J(x^*, p^*) = J_0(x^*).
  \end{equation*} 
\end{theorem}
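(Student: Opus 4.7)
The plan is to build directly on Lemma \ref{lem.ubounded}. Since $\omega^k \geq 0$ and \eqref{eq.phibd} shows $\omega^k$ is monotonically decreasing, the sequence $\{\omega^k\}$ converges to a nonnegative limit. Telescoping \eqref{eq.phibd} therefore gives
\[
  \sum_{k=0}^\infty \tfrac{1}{\mu}\bigl(\tnorm{z^{k+1}}^2 + \tnorm{q^k}^2\bigr) + 2(x^{k+1}-x^k)^T H(x^{k+1}-x^k) \le \omega^0 < \infty,
\]
which immediately forces $q^k \to 0$ and $z^{k+1} \to 0$. By definition of $E^k$, this also yields $E^k \to 0$.

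Next, the boundedness of $\{u^k\}$ and $\{Ax^k\}$ follows from the same convergence of $\omega^k$: from $\mu\tnorm{u^k-u^*}^2 \le \omega^k \le \omega^0$ and $\tfrac{1}{\mu}\tnorm{A(x^k-x^*)}^2 \le \omega^k \le \omega^0$, both sequences remain bounded (here Assumption \ref{adal assumption} supplies the reference point $(x^*,u^*)$).

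For the objective convergence, I will squeeze $\hat J(x^{k+1},p^{k+1})$ between the two sides of \eqref{eq.upper}. On the left, $(u^*)^T z^{k+1} \to 0$ since $z^{k+1}\to 0$, so
\[
  \liminf_{k\to\infty}\; \hat J(x^{k+1},p^{k+1}) \ge \hat J(x^*,p^*).
\]
On the right, $(u^{k+1})^T z^{k+1} \to 0$ because $\{u^{k+1}\}$ is bounded (just established) and $z^{k+1}\to 0$, while $\tfrac{1}{\mu}(q^k)^T(p^*-p^{k+1}) \to 0$ because $q^k \to 0$ and $\{p^{k+1}\}$ is bounded by the third bound in \eqref{adal 3} from Lemma \ref{lem:adal 1}. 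Hence $\limsup_{k\to\infty} \hat J(x^{k+1},p^{k+1}) \le \hat J(x^*,p^*)$, giving the desired limit. Finally, since $Ax^*+b=p^*$, the identity $\hat J(x^*,p^*)=\varphi(x^*)+\dist{Ax^*+b}{C}=J_0(x^*)$ closes the argument.

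The only nontrivial obstacle is keeping the right-hand side of \eqref{eq.upper} controlled; boundedness of $\{u^{k+1}\}$ and boundedness of $\{p^{k+1}\}$ are both needed, and the latter is not obvious from the descent lemma alone, but is supplied cleanly by \eqref{adal 3}. Everything else is a routine telescoping-plus-squeeze argument.
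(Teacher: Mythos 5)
Your proof is correct and follows essentially the same route as the paper: telescope \eqref{eq.phibd} to force $q^k\to 0$ and $z^{k+1}\to 0$, then squeeze $\hat J(x^{k+1},p^{k+1})$ between the two sides of \eqref{eq.upper} using the boundedness of $\{u^k\}$ and $\{p^k\}$. The one place you diverge is the boundedness of $\{u^k\}$ and $\{Ax^k\}$: you read it off directly from the monotone decrease of $\omega^k$, whereas the paper derives it from the a priori bounds $\dnorm{\hu^k}\le 1$ and $\dnorm{p^k}\le\hmu$ of Lemma \ref{lem:adal 1} combined with $q^k\to 0$; your version is more direct and equally valid. One small indexing point: Lemma \ref{lem.ubounded} asserts \eqref{eq.phibd} only for $k\ge 1$, so the telescoped sum should start at $k=1$ and be bounded by $\omega^1$ rather than $\omega^0$; this does not affect any of the conclusions.
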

\begin{proof}
  Summing \eqref{eq.phibd} over all $k \geq 1$ yields
  \[
    \sum_{k=1}^\infty \left(2(x^{k+1} -x^k)^T H(x^{k+1} -x^k) + \tfrac{1}{\mu} (\|z^{k+1} \|^2_2 + \|q^k  \|^2_2)\right) \leq \om^1,
  \] 
  which, since $H\succeq0$, implies that $z^{k+1} \to 0$ and $q^k \to 0$.  Consequently, $E^k\to 0$.

  The sequence $\{u^k\}$ is bounded since $\hu^{k+1}+(1/\mu)q^k=u^{k+1}$, where $\{\hu^k\}$ is bounded by \eqref{adal 3} and $q^k\to 0$.  Similarly, the sequence $\{Ax^k\}$ is bounded since $\mu(u^{k+1}-u^k)+p^{k+1}-b=Ax^{k+1}$, where the sequence $\{p^k\}$ is bounded by \eqref{adal 3}.  Finally, by \eqref{eq.upper}, we have that $\hJ(x^k,p^k)\to \hJ(x^*,p^*)$ since both $z^k\to 0$ and $q^k\to 0$ while $\{p^k\}$ and $\{u^k\}$ are both bounded.
\end{proof}
  
\begin{cor}\label{cor:adal cluster}
  Suppose that the sequence $\{(x^k,p^k,u^k)\}$ is generated by ADAL with initial point $x^0 \in X$.  Then, under Assumption \ref{adal assumption}, every cluster point of the sequence $\{x^k\}$ is a solution to \eqref{s-dist}.
\end{cor}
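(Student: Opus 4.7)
The plan is to exploit the convergence facts already established in Theorem~\ref{thm:adal cvg}, together with the continuity of $\hat J$, so that the corollary reduces to a short continuity argument rather than a fresh subdifferential calculation.

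Let $\bar x$ be a cluster point of $\{x^k\}$, with $x^{k_j}\to\bar x$ along some subsequence. Since $X$ is closed and $x^k\in X$ for every $k$ (which follows from the definition of the $x$-subproblem $\cL_x$ in ADAL Step~1), we have $\bar x\in X$. Theorem~\ref{thm:adal cvg} gives $z^{k+1}=Ax^{k+1}+b-p^{k+1}\to 0$, so along the chosen subsequence
\[
p^{k_j}=Ax^{k_j}+b-z^{k_j}\longrightarrow A\bar x+b.
\]
The function $\hat J(x,p)=\varphi(x)+\dist{p}{C}$ is continuous, since $\varphi$ is continuous and the distance function to a nonempty closed convex set is continuous (indeed, nonexpansive). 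Hence
\[
\hat J(x^{k_j},p^{k_j})\longrightarrow \varphi(\bar x)+\dist{A\bar x+b}{C}=J_0(\bar x).
\]

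The other ingredient is the primal-value convergence from Theorem~\ref{thm:adal cvg}, namely $\hat J(x^k,p^k)\to \hat J(x^*,p^*)=J_0(x^*)$. Combining this with the display above yields $J_0(\bar x)=J_0(x^*)$, and since $\bar x\in X$ and $x^*$ is a minimizer of $J_0$ over $X$ (by the equivalence of \eqref{s-dist} and \eqref{Jhat} under Assumption~\ref{adal assumption}), we conclude that $\bar x$ is itself optimal for \eqref{s-dist}.

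There is essentially no serious obstacle here: all the ``hard'' work has already been done in Theorem~\ref{thm:adal cvg} and in Lemma~\ref{lem:adal 1}. The only point that needs a moment of care is that feasibility must be recovered in the limit, but this is exactly what $z^k\to 0$ buys us, since it forces $p^{k_j}\to A\bar x+b$ and therefore allows the two limiting values of $\hat J$ to be compared. As an alternative route, one could instead pass to the limit in the inclusion
\[
-\tfrac{1}{\mu}A^T q^{k_j-1}\in \nabla\varphi(x^{k_j})+A^T\hat u^{k_j}+\ncone{x^{k_j}}{X},
\]
using boundedness of $\{\hat u^k\}$ from \eqref{adal 3}, upper semi-continuity of $\sd\dist{\cdot}{C}$ and of $\ncone{\cdot}{X}$, and the chain-rule identity \eqref{partial sd for J}, to obtain $0\in\sd J_0(\bar x)+\ncone{\bar x}{X}$. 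This is slightly more technical, so I would present the continuity-based argument as the proof and only mention the KKT-style derivation as a remark.
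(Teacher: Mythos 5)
Your proposal is correct and follows essentially the same route as the paper: both arguments invoke Theorem~\ref{thm:adal cvg} to get $z^k\to 0$ and $\hat J(x^k,p^k)\to J_0(x^*)$, identify the limit of $p^k$ along the subsequence as $A\bar x+b$, and conclude $J_0(\bar x)=J_0(x^*)$ by continuity of $\hat J$. The only cosmetic difference is that the paper first extracts a convergent subsequence of $\{p^k\}$ using the bound \eqref{adal 3}, whereas you identify its limit directly from $p^{k}=Ax^{k}+b-z^{k}$, which is a slight streamlining of the same idea.
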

\begin{proof}
  Let $\bx$ be a cluster point of $\{x^k\}$, and let $S\subset\bN$ be a subsequence such that $x^k\overset{S}{\to}\bx$. By \eqref{adal 3}, $\{p^k\}$ is bounded so we may assume with no loss in generality that there is a $\bp$ such that $p^k\overset{S}{\to}\bp$. Theorem \ref{thm:adal cvg} tells us that $A\bx+b=\bp$ and $\hJ(\bx,\bp)=J_0(x^*)$ so that $J_0(\bx)=\hJ(\bx,A\bx+b)=\hJ(\bx,\bp)=J_0(x^*)$.
\end{proof}

We now address the question of when the sequence $\{x^k\}$ has cluster points.  For the IRWA of the previous section this question was answered by appealing to Theorem \ref{thm:boundedness} which provided necessary and sufficient conditions for the compactness of the lower level sets of the function $J(x,\eps)$.  This approach also applies to the ADAL algorithm, but it is heavy handed in conjunction with Assumption \ref{adal assumption}. In the next result we consider two alternative approaches to this issue.

\begin{prop}
  Suppose that the sequence $\{(x^k,p^k,u^k)\}$ is generated by ADAL with initial point $x^0 \in X$.  If either
  \begin{enumerate}
    \item[(a)] $\left[\bx\in X^\infty\cap\ker{(H)}\cap A^{-1}C^\infty\mbox{ satisfies }g^T\bx\le 0\right]\iff \bx=0$, or
    \item[(b)] Assumption \ref{adal assumption} holds and
    \begin{equation}\label{weak boundedness}
      \left[\tx\in X^\infty\cap\ker(H)\cap\ker(A)\mbox{ satisfies }g^T\tx\le 0\right]\iff \tx=0,
    \end{equation}
  \end{enumerate}
  then $\{x^k\}$ is bounded and every cluster point of this sequence is a solution to \eqref{s-dist}. 
\end{prop}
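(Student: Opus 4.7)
The plan is to unify conditions (a) and (b) via a common scheme: first ensure that Assumption~\ref{adal assumption} is in force; then use Theorem~\ref{thm:adal cvg} and Lemma~\ref{lem:adal 1} to extract enough boundedness information about the iterates; next rule out unboundedness of $\{x^k\}$ by a recession-direction argument that invokes \eqref{boundedness} or \eqref{weak boundedness}; and finally apply Corollary~\ref{cor:adal cluster} to conclude that every cluster point solves \eqref{s-dist}.

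Under (b), Assumption~\ref{adal assumption} is assumed outright. Under (a), I would first note that \eqref{boundedness} is precisely the condition of Theorem~\ref{thm:boundedness} ensuring that every sublevel set of $J_0$ over $X$ is compact, so that $J_0$ attains its minimum at some $x^*\in X$. Applying the convex subdifferential chain rule to $J_0=\varphi+\dist{A\cdot+b}{C}$, which is valid since $\dist{\cdot}{C}$ is finite and continuous everywhere, yields
\[
  0\in\nabla\varphi(x^*)+A^T\sd\dist{\cdot}{C}(Ax^*+b)+\ncone{x^*}{X}.
\]
Selecting $u^*$ in the middle summand and setting $p^*:=Ax^*+b$ recovers \eqref{eq.kkt}, so Assumption~\ref{adal assumption} holds automatically under (a) as well.

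With the Assumption in hand, Theorem~\ref{thm:adal cvg} supplies that $\{Ax^k\}$ is bounded and $\hJ(x^k,p^k)\to J_0(x^*)$, while the bound \eqref{adal 3} of Lemma~\ref{lem:adal 1} gives boundedness of $\{p^k\}$. Since $C$ is nonempty and closed, $\{\dist{p^k}{C}\}$ is bounded, so $\varphi(x^k)=\hJ(x^k,p^k)-\dist{p^k}{C}$ is bounded above. Now suppose for contradiction that $\{x^k\}$ is unbounded, and pass to a subsequence with $\tnorm{x^k}\to\infty$ and $x^k/\tnorm{x^k}\to\bar x$ with $\tnorm{\bar x}=1$. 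Setting $t^k:=1/\tnorm{x^k}\downarrow 0$ and noting $t^k x^k\to\bar x$ yields $\bar x\in X^\infty$; boundedness of $\{Ax^k\}$ forces $A\bar x=0$; boundedness of $\varphi(x^k)$ from above, divided by $\tnorm{x^k}^2$ and passed to the limit together with $H\succeq 0$, gives $\bar x^T H\bar x=0$, whence $H\bar x=0$; and since $H\succeq 0$ implies $g^T x^k\le\varphi(x^k)$, dividing by $\tnorm{x^k}$ produces $g^T\bar x\le 0$.

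Therefore $\bar x\ne 0$ lies in $X^\infty\cap\ker(H)\cap\ker(A)$ with $g^T\bar x\le 0$. Under (b), this directly contradicts \eqref{weak boundedness}. Under (a), the containment $\ker(A)\subset A^{-1}C^\infty$ (valid since $0\in C^\infty$) places $\bar x$ in the set on the left of \eqref{boundedness}, again a contradiction. Hence $\{x^k\}$ is bounded, and Corollary~\ref{cor:adal cluster} immediately yields that every cluster point solves \eqref{s-dist}. The main obstacle I anticipate is the careful verification that \eqref{boundedness} implies Assumption~\ref{adal assumption}, since one must combine compactness of the sublevel sets of $J_0$ from Theorem~\ref{thm:boundedness} with a convex subdifferential sum rule; the subsequent recession-direction analysis parallels the proof of Theorem~\ref{thm:boundedness} and should be routine once the upper bounds on $\{Ax^k\}$, $\{p^k\}$, and $\varphi(x^k)$ are in place.
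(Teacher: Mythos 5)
Your proof is correct. For case (b) your argument is essentially the paper's: extract a normalized recession direction $\bx$ from an unbounded subsequence, use boundedness of $\{Ax^k\}$ (or equivalently $z^k\to0$ plus boundedness of $\{p^k\}$) to get $A\bx=0$, use boundedness of $\{\hJ(x^k,p^k)\}$ and $H\succeq0$ to get $H\bx=0$ and $g^T\bx\le0$, and contradict \eqref{weak boundedness}. Where you genuinely diverge is case (a). The paper, after establishing Assumption \ref{adal assumption} exactly as you do (existence of a minimizer from compact level sets, then attachment of multipliers), proves boundedness of $\{x^k\}$ by a separate route: it bounds $J(x^{k+1},0)\le\hJ(x^k,p^k)+2\norm{z^k}$, invokes the second inequality in \eqref{eq.upper} to show this stays bounded, and then applies the level-set compactness of Theorem \ref{thm:boundedness} directly. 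You instead observe that $\ker(A)\subset A^{-1}C^\infty$ (since $0\in C^\infty$), so condition (a) implies \eqref{weak boundedness}, and the single recession-direction argument covers both cases. This is a legitimate and arguably cleaner unification --- indeed the paper itself remarks on this inclusion immediately after the proposition, though it does not exploit it in the proof. What the paper's route buys is an explicit containment of $\{x^k\}$ in a specific compact level set $L(\alpha,0)$, which is slightly more quantitative; what your route buys is economy, since the (b) argument is reused verbatim. All the supporting facts you cite --- boundedness of $\{p^k\}$ from \eqref{adal 3}, boundedness of $\{Ax^k\}$ and convergence of $\hJ(x^k,p^k)$ from Theorem \ref{thm:adal cvg}, and the passage from $\bx^TH\bx=0$ to $H\bx=0$ via positive semi-definiteness --- are available and correctly deployed.
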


\begin{proof}
  Let us first assume that (a) holds.  By Theorem \ref{thm:boundedness}, the condition in (a) (recall \eqref{boundedness}) implies that the set $L(J(x^0,0),0)$ is compact.  Hence, a solution $x^*$ to \eqref{s-dist} exists.  By \cite[Theorem 23.7]{RTR}, there exist $p^*$ and $u^*$ such that $(x^*,p^*,u^*)$ satisfies \eqref{eq.kkt}, i.e., Assumption \ref{adal assumption} holds.  Since
  \begin{align*}
    J(x^{k},0)
      &=   \varphi(x^k)+\dist{Ax^k+b}{C} \\ 
      &=   \varphi(x^k)+\norm{(Ax^k+b)-P_C(Ax^k+b)} \\
      &\le \varphi(x^k)+\norm{(Ax^k+b)-p^k}+\norm{p^k-P_C(p^k)}+\norm{P_C(p^k)-P_C(Ax^k+b)} \\
      &=   \hJ(x^k,p^k)+2\norm{z^k},
  \end{align*}
  the second inequality in \eqref{eq.upper} tells us that for all $k \in \bN$ we hvae
  \[
    J(x^{k+1},0)\le \hJ(x^*,p^*)+2\norm{z^k} - (u^{k+1} )^T z^{k+1}  + \frac{1}{\mu} (q^k)^T (p^* - p^{k+1} )).
  \]
  By Lemma \ref{lem:adal 1} and Theorem \ref{thm:adal cvg}, the right-hand side of this inequality is bounded for all $k\in\bN$, and so, by Theorem \ref{thm:boundedness}, the sequence $\{x^k\}$ is bounded.  Corollary \ref{cor:adal cluster} then tells us that all cluster points of this sequence are solutions to \eqref{s-dist}.

  Now assume that (b) holds.  If the sequence $\{x^k\}$ is unbounded, then there is a subsequence $S\subset\bN$ and a vector $\bx\in X^\infty$ such that $\tnorm{x^k}\overset{S}{\to}\infty$ and $x^k/\tnorm{x^k}\overset{S}{\to} \bx$ with $\tnorm{\bx}=1$.  By Lemma \ref{lem:adal 1}, $\{p^k\}$ is bounded and, by Theorem \ref{thm:adal cvg}, $z^k\to 0$. Hence, $(Ax^k+b-p^k)/\tnorm{x^k}=z^k/\tnorm{x^k}\overset{S}{\to} 0$ so that $A\bx=0$. In addition, the sequence $\{\hJ(x^k,p^k)\}$ is bounded, which implies $\hJ(x^k,p^k)/\tnorm{x^k}^2\overset{S}{\to} 0$ so that $H\bx=0$. Moreover, since $H$ is positive semi-definite, $g^T(x^k/\tnorm{x^k})\le \hJ(x^k,p^k)/\tnorm{x^k}\overset{S}{\to} 0$ so that $g^T\bx\le 0$.  But then (b) implies that $\bx=0$. This contradiction implies that the sequence $\{x^k\}$ must be bounded. The result now follows from Corollary \ref{cor:adal cluster}.
\end{proof}

Note that, since $\ker(A)\subset A^{-1}C^\infty$, the condition given in (a) implies \eqref{weak boundedness}, and that \eqref{weak boundedness} is strictly weaker whenever $\ker(A)$ is strictly contained in $A^{-1}C^\infty$.

We conclude this section by stating a result for the case when $H$ is positive definite.  As has been observed, in such cases, the function $J_0$ is strongly convex and so the problem \eqref{s-dist} has a unique global solution $x^*$.  Hence, a proof paralleling that provided for Theorem \ref{thm:H pd implies cvg} applies to give the following result.

\begin{theorem}\label{thm:H pd implies adal cvg}
  Suppose that $H$ is positive definite and the sequence $\{(x^k,p^k,u^k)\}$ is generated by ADAL with initial point $x^0 \in X$.  Then, the problem \eqref{s-dist} has a unique global solution $x^*$ and $x^k\rightarrow x^*$.
\end{theorem}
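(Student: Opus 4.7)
The plan is to mirror the argument used for Theorem \ref{thm:H pd implies cvg} in the IRWA setting, invoking the convergence machinery already developed for ADAL. First, since $H$ is positive definite, the function $J_0$ is strongly convex, so problem \eqref{s-dist} possesses a unique global solution $x^*$. By \cite[Theorem 23.7]{RTR}, there exist $p^*$ and $u^*$ such that $(x^*,p^*,u^*)$ satisfies \eqref{eq.kkt}, so Assumption \ref{adal assumption} holds automatically.

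Next, I would establish boundedness of $\{x^k\}$ by invoking the preceding Proposition. Positive definiteness of $H$ gives $\ker(H) = \{0\}$, so the set $X^\infty \cap \ker(H) \cap A^{-1}C^\infty$ reduces to $\{0\}$ and condition (a) of that Proposition is satisfied trivially. Thus $\{x^k\}$ is bounded and every cluster point of $\{x^k\}$ is a solution to \eqref{s-dist}.

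Combining these observations, uniqueness of the solution forces every cluster point of $\{x^k\}$ to equal $x^*$. To upgrade this to convergence of the full sequence, I would use a standard subsequence argument: given any subsequence $\tS \subset \bN$, boundedness of $\{x^k\}$ yields a further subsequence $\tS_0 \subset \tS$ along which $x^k$ converges to some limit, which by the preceding step must be $x^*$. Since every subsequence has a sub-subsequence converging to the same point $x^*$, the entire sequence $\{x^k\}$ converges to $x^*$.

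There is no real obstacle here: the work has already been absorbed into the Proposition (boundedness), Corollary \ref{cor:adal cluster} (cluster points are solutions), and strong convexity (uniqueness). The only delicate point is confirming that condition (a) of the Proposition, rather than condition (b), is the appropriate hypothesis — and this is immediate because $\ker(H) = \{0\}$ makes (a) vacuous, so we need not verify \eqref{weak boundedness} through any auxiliary reasoning about $g$ or $X^\infty$.
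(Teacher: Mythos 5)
Your proposal is correct and follows essentially the route the paper intends: the paper gives no explicit proof, remarking only that the argument parallels Theorem~\ref{thm:H pd implies cvg}, and your instantiation via strong convexity (uniqueness), condition (a) of the preceding Proposition (boundedness and cluster points being solutions, which is vacuously satisfied since $\ker(H)=\{0\}$), and the standard bounded-sequence/unique-cluster-point argument is exactly the natural way to fill that in.
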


\subsection{Complexity of ADAL}

In this subsection, we analyze the complexity of ADAL.  As was done for IRWA in Theorem~\ref{thm: IRWA complexity}, we show that ADAL requires at most $O(1/\veps^2)$ iterations to obtain an $\veps$-optimal solution to the problem \eqref{s-dist}.  In contrast to this result, some authors \cite{GoldMa12, GoldMaSche13} establish an $O(1/\veps)$ complexity for $\veps$-optimality for ADAL-type algorithms applied to more general classes of problems, which includes \eqref{s-dist}.  However, the ADAL decomposition employed by these papers involves subproblems that are as difficult as our problem \eqref{s-dist}, thereby rendering these decomposition unusable for our purposes.  On the other hand, under mild assumptions, the recent results in \cite{OADM} show that for a general class of problems, which includes \eqref{Jhat}, the ADAL algorithm employed here has $\hJ(x^k,p^k)$ converging to an $\veps$-optimal solution to \eqref{Jhat} with $O(1/\veps)$ complexity in an {\it ergodic sense} and $\|Ax+b-p\|_2^2$ converging to a value less than $\veps$ with $O(1/\veps)$ complexity.  This corresponds to an $O(1/\veps^2)$ complexity for $\veps$-optimality for problem \eqref{s-dist}.  As of this writing, we know of no result that applies to our ADAL algorithm that establishes a better iteration complexity bound for obtaining an $\veps$-optimal solution to \eqref{s-dist}.

We use results in \cite{OADM} to establish the following result.

\begin{theorem} \label{thm.complexity.adal}
  Consider the problem \eqref{s-dist} with $X=\Rn$ and suppose that the sequence $\{(x^k,p^k,u^k)\}$ is generated by ADAL with initial point $x^0 \in X$.  Then, under Assumption \ref{adal assumption}, in at most $O(1/\veps^2)$ iterations we have an iterate $x^{\bar k}$ with $k\le {\bar k}\le 2k-1$ that is $\veps$-optimal to \eqref{s-dist}, i.e., such that \eqref{eps opt sol} holds with $\tx=x^{\bar k}$.
\end{theorem}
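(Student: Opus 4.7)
The plan is to reduce Theorem~\ref{thm.complexity.adal} to the ergodic complexity bounds of \cite{OADM} and then convert $\varepsilon$-optimality for the split reformulation \eqref{Jhat} into $\varepsilon$-optimality for the original problem \eqref{s-dist}.  The bridging inequality, derived earlier in this section inside the proof of the proposition on boundedness of $\{x^k\}$, is
\[
  J_0(x) \;\le\; \hat J(x,p) \;+\; 2\bigl\|Ax+b-p\bigr\|,
\]
valid for every pair $(x,p)$; it follows from $J_0(x)=\hat J(x,Ax+b)$, the triangle inequality, and non-expansivity of $P_C$.  Plugging any pair $(x,p)$ into this bound controls $J_0(x)-J_0(x^*)$ by the sum of the \eqref{Jhat}-objective gap and twice the primal residual.

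First I would invoke the ergodic results of \cite{OADM}.  Under Assumption~\ref{adal assumption}, these yield, for running-average iterates $\bar x^N$ and $\bar p^N$ of ADAL applied to \eqref{Jhat}, the two $O(1/N)$ bounds
\[
  \hat J(\bar x^N,\bar p^N)-\hat J(x^*,p^*) \le C_1/N
  \quad\mbox{and}\quad
  \bigl\|A\bar x^N+b-\bar p^N\bigr\|_2^{\,2} \le C_2/N,
\]
with constants depending only on an initial distance measure, which in our setting is bounded by $\om^1$ thanks to Lemma~\ref{lem.ubounded}.  Taking square roots of the residual bound and substituting into the bridging inequality gives
\[
  J_0(\bar x^N)-J_0(x^*) \le C_1/N + 2\sqrt{C_2/N} = O(1/\sqrt{N}),
\]
so requiring the right-hand side to be at most $\varepsilon$ forces $N \ge O(1/\varepsilon^2)$.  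This already establishes the complexity in a weak ergodic sense.

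To extract a specific iterate $x^{\bar k}$ with $k \le \bar k \le 2k-1$, I would apply the ergodic analysis to the tail window $[k,2k-1]$, using that the monotone decrease of $\om^k$ from Lemma~\ref{lem.ubounded} keeps all ``restart'' constants bounded by $\om^1$.  Summing the descent inequality of Lemma~\ref{lem.ubounded} over the window yields $\sum_{j=k}^{2k-1}\|z^{j+1}\|_2^{\,2} \le \mu\,\om^k$, so by the pigeonhole principle there exists $\bar k \in \{k,\dots,2k-1\}$ with $\|z^{\bar k+1}\|_2 = O(1/\sqrt{k})$.  A parallel pigeonhole argument applied to the ergodic inequality restricted to the same window identifies an index at which the \eqref{Jhat}-objective gap is $O(1/k)$; combining with the bridging inequality then produces an index $\bar k\in[k,2k-1]$ satisfying $J_0(x^{\bar k})-J_0(x^*) = O(1/\sqrt{k}) \le \varepsilon$ whenever $k = O(1/\varepsilon^2)$.

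I expect the main obstacle to be simultaneous control of the objective gap and the primal residual at the \emph{same} index $\bar k$, rather than merely at averaged iterates.  The monotonicity $\om^{k+1}\le\om^k$ from Lemma~\ref{lem.ubounded} is the principal tool for synchronizing the two pigeonhole estimates, but making the argument rigorous for the objective gap at an individual iterate may require tracking one-step differences $\hat J(x^{k+1},p^{k+1})-\hat J(x^k,p^k)$ more carefully than is needed for the purely ergodic statement in \cite{OADM}.
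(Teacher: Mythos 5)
Your overall strategy—bound the \eqref{Jhat}-gap and the residual $\tnorm{Ax+b-p}$ separately via the results of \cite{OADM}, then transfer to $J_0$ through an inequality of the form $J_0(x)\le \hJ(x,p)+c\norm{Ax+b-p}$—is the same as the paper's, and your $O(1/\sqrt{N})$ arithmetic is right. But the step you yourself flag as "the main obstacle" is a genuine gap, not a technicality: two separate pigeonhole arguments over the window $\{k,\dots,2k-1\}$ produce two indices, one with small residual and one with small $\hJ$-gap, and nothing in your argument forces them to coincide. Since the theorem asserts $\veps$-optimality of a single iterate $x^{\bar k}$, the proof is incomplete as written; the ergodic-average bound for $\bar x^N$ does not rescue it either, because $\bar x^N$ is not an iterate of the algorithm.

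The paper closes exactly this gap by exploiting the fact that the residual bound from \cite[Theorem 2]{OADM} (Lemma~\ref{consdes}) is a \emph{per-iterate}, non-ergodic estimate: $\tnorm{Ax^j+b-p^j}^2\le \tfrac{1}{j}[\tnorm{A(x^0-x^*)}^2+\mu^2\tnorm{u^0-u^*}^2]$ holds for \emph{every} $j$. Consequently no pigeonhole is needed for the residual at all. One only selects $\bar k\in\{k,\dots,2k-1\}$ minimizing $\hJ(x^j,p^j)$; summing the telescoping inequality of Lemma~\ref{fdes} over $j=k-1,\dots,2k-2$ and using the boundedness of $\tnorm{u^j}^2$ and $\tnorm{A(x^j-x^*)}^2$ (from the monotonicity of $\om^j$ in Lemma~\ref{lem.ubounded}) shows the minimum gap is at most the window average, i.e.\ $\hJ(x^{\bar k},p^{\bar k})-\hJ(x^*,p^*)\le(\mu\tau_2+\tau_1/\mu)/k$, while the residual at that \emph{same} index is automatically $O(1/\sqrt{\bar k})=O(1/\sqrt{k})$ since $\bar k\ge k$. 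Feeding both into $J_0(x)-\hJ(x,p)\le\sqrt{l}\,\tnorm{Ax+b-p}$ gives $J_0(x^{\bar k})-J_0(x^*)=O(1/\sqrt{k})$ at one common index. If you replace your two-pigeonhole plan with this observation, your argument becomes the paper's proof.
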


The key results from \cite{OADM} used to prove this theorem follow. 

\begin{lemma}\label{fdes}\cite[Lemma 2]{OADM}
  Suppose that the sequence $\{(x^k, p^k, u^k)\}$ is generated by ADAL with initial point $x^0 \in X$, and, under Assumption~\ref{adal assumption}, let $(x^*, p^*, u^*)$ be the optimal solution of (\ref{Jhat}).  Then, for all $k \in \bN$, we have 
  \begin{align*}
    \hJ (x^{k+1}, p^{k+1}) - \hJ (x^*, p^*) 
      \le&\ \tfrac{\mu}{2}(\tnorm{u^k}^2 - \tnorm{u^{k+1}}^2) - \tfrac{1}{2\mu}\tnorm{Ax^k + b - p^{k+1}}^2 \\
         &\ + \tfrac{1}{2\mu}(\tnorm{Ax^* - Ax^k}^2 - \tnorm{Ax^* - Ax^{k+1}}^2).
  \end{align*}
\end{lemma}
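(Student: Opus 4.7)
The plan is to combine the one-step descent inequality in Lemma~\ref{fdes}, via telescoping, with the residual summability from Lemma~\ref{lem.ubounded}, and then transfer these $\hJ$-level bounds to a bound on the $J_0$-gap through the Lipschitz relation
\[
  |J_0(x) - \hJ(x,p)| \;\le\; \norm{Ax + b - p},
\]
which follows from the $1$-Lipschitz continuity of $\dist{\cdot}{C}$.  Because $\hJ(x^*,p^*) = J_0(x^*)$ (as $Ax^* + b = p^*$) and $J_0(x^{\bar k}) \ge J_0(x^*)$, this produces both
\[
  J_0(x^{\bar k}) - J_0(x^*) \;\le\; \del_{\bar k - 1} + \norm{z^{\bar k}} \qquad \mbox{and} \qquad \del_{\bar k -1} \;\ge\; -\norm{z^{\bar k}},
\]
where $\del_j := \hJ(x^{j+1},p^{j+1}) - \hJ(x^*,p^*)$ and $z^{\bar k} := Ax^{\bar k} + b - p^{\bar k}$.

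I would then telescope Lemma~\ref{fdes} over the window $j \in \{k-1,\dots,2k-2\}$ of $k$ consecutive indices (corresponding to iterate indices $\bar k \in \{k,\dots,2k-1\}$) to obtain
\[
  \sum_{j=k-1}^{2k-2} \del_j + \tfrac{1}{2\mu}\sum_{j=k-1}^{2k-2}\norm{Ax^j + b - p^{j+1}}^2 \;\le\; C,
\]
where $C = \tfrac{\mu}{2}\norm{u^{k-1}}^2 + \tfrac{1}{2\mu}\norm{A(x^* - x^{k-1})}^2$ is uniformly bounded in $k$ by the boundedness of $\{u^k\}$ and $\{Ax^k\}$ (Theorem~\ref{thm:adal cvg} together with the monotonicity of $\om^k$ established in Lemma~\ref{lem.ubounded}).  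Separately, telescoping Lemma~\ref{lem.ubounded} yields $\sum_{j=0}^\infty \norm{z^{j+1}}^2 \le \mu\om^0 =: C'$, and therefore $\sum_{j=k-1}^{2k-2}\norm{z^{j+1}}^2 \le C'$.  Combining these with the lower bound $\del_j \ge -\norm{z^{j+1}}$ and the Cauchy--Schwarz inequality gives
\[
  \sum_{j=k-1}^{2k-2} \max\{\del_j,0\} \;\le\; C + \sum_{j=k-1}^{2k-2}\norm{z^{j+1}} \;\le\; C + \sqrt{kC'}.
\]

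A counting argument then closes the proof.  By Markov's inequality, fewer than $k/2$ indices $j$ in the window can violate $\max\{\del_j,0\} \le 2(C + \sqrt{kC'})/k$, and fewer than $k/2$ can violate $\norm{z^{j+1}}^2 \le 2C'/k$.  Since the window contains exactly $k$ indices, some $j^* \in \{k-1,\dots,2k-2\}$ satisfies both bounds simultaneously; setting $\bar k := j^* + 1 \in \{k,\dots,2k-1\}$ then gives
\[
  J_0(x^{\bar k}) - J_0(x^*) \;\le\; \del_{j^*} + \norm{z^{\bar k}} \;=\; O(1/\sqrt{k}).
\]
Requiring this to be at most $\veps$ forces $k = O(1/\veps^2)$, so after $2k-1 = O(1/\veps^2)$ iterations the desired iterate $x^{\bar k}$ has been produced.

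The main obstacle is that the individual summands $\del_j$ in the telescoped Lemma~\ref{fdes} bound need not be nonnegative, so a direct ``best-iterate over a window'' extraction cannot control the $\hJ$-gap of a single iterate in isolation.  The joint pigeonhole step sidesteps this by using the square-summability of $\norm{z^{j+1}}$ (which forces the primal residual to be small on most iterates) to absorb the lower bound $\del_j \ge -\norm{z^{j+1}}$ and thereby pin down the positive-part sum, after which a single index in the window of length $k$ must satisfy both desired inequalities simultaneously.
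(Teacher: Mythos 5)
Your proposal does not prove the statement at hand: the statement \emph{is} the one-step inequality of Lemma~\ref{fdes}, and your opening sentence takes that very inequality as an input (``combine the one-step descent inequality in Lemma~\ref{fdes}, via telescoping\dots''), so the argument is circular as a proof of it. What you have actually sketched is the downstream complexity argument---essentially Lemma~\ref{ADALcomp} and Theorem~\ref{thm.complexity.adal}---with the paper's best-iterate/averaging step (which invokes Lemma~\ref{consdes} to bound $\tnorm{Ax^{\bar k}+b-p^{\bar k}}$ directly) replaced by a pigeonhole over the window $\{k,\dots,2k-1\}$ that exploits the square-summability of the residuals from Lemma~\ref{lem.ubounded}. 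That counting argument looks sound as far as it goes, and it is a reasonable alternative to the paper's averaging step, but it proves the wrong statement.

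A proof of Lemma~\ref{fdes} requires the standard one-step ADMM estimate, none of whose ingredients appear in your sketch. Concretely: the optimality condition for the $p$-update gives $\hu^{k+1}=u^k+\tfrac{1}{\mu}(Ax^k+b-p^{k+1})\in\partial\dist{p^{k+1}}{C}$, whence by convexity $\dist{p^*}{C}-\ip{\hu^{k+1}}{p^*}\ge\dist{p^{k+1}}{C}-\ip{\hu^{k+1}}{p^{k+1}}$; the optimality condition for the $x$-update combined with Step 2 gives $0\in\nabla\varphi(x^{k+1})+A^Tu^{k+1}+\ncone{x^{k+1}}{X}$, whence $\varphi(x^*)+\ip{u^{k+1}}{Ax^*}\ge\varphi(x^{k+1})+\ip{u^{k+1}}{Ax^{k+1}}$. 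Adding these two inequalities, using $Ax^*+b=p^*$ and the update identity $u^{k+1}=u^k+\tfrac{1}{\mu}(Ax^{k+1}+b-p^{k+1})$, and completing squares via $2\ip{a}{b}=\tnorm{a}^2+\tnorm{b}^2-\tnorm{a-b}^2$ on the residual cross terms produces exactly the three quantities $\tfrac{\mu}{2}(\tnorm{u^k}^2-\tnorm{u^{k+1}}^2)$, $\tfrac{1}{2\mu}(\tnorm{A(x^*-x^k)}^2-\tnorm{A(x^*-x^{k+1})}^2)$, and $-\tfrac{1}{2\mu}\tnorm{Ax^k+b-p^{k+1}}^2$ on the right-hand side. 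The paper itself disposes of the lemma by citation: it invokes \cite[Lemma 2]{OADM} and supplies only the notation dictionary (identifying $(x,p)$ with $(z,x)$, $\varphi$ with $g$, $\dist{\cdot}{C}$ with $f$, $A$ with $B$, $-I$ with $A$, and $-b$ with $c$) together with the check that $\varphi$ and $\dist{\cdot}{C}$ are closed, proper, and convex. Either route would be acceptable; assuming the lemma in order to rederive the complexity theorem is not.
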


\begin{lemma}\label{consdes}\cite[Theorem 2]{OADM}
  Suppose that the sequence $\{(x^k, p^k, u^k)\}$ is generated by ADAL with initial point $x^0 \in X$, and, under Assumption~\ref{adal assumption}, let $(x^*, p^*, u^*)$ be the optimal solution of (\ref{Jhat}). Then, for all $k \in \bN$, we have
  \[
    \tnorm{Ax^k + b - p^k}^2 + \tnorm{Ax^k - Ax^{k-1}}^2 \leq \tfrac{1}{k} [\tnorm{A(x^0-x^*)}^2 + \mu^2\tnorm{u^0-u^*}^2],
  \]
  i.e., in particular, we have
  \[
    \tnorm{Ax^k + b - p^k}^2 \leq \tfrac{1}{k} [\tnorm{A(x^0-x^*)}^2 + \mu^2\tnorm{u^0-u^*}^2].
  \]
\end{lemma}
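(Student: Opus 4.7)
The strategy is to split the target gap into two pieces, bound each by the ingredients Lemma~\ref{fdes} and Lemma~\ref{consdes}, and select $\bar k$ by a pigeonhole argument over a window of length $k$.

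First I would write, using that $Ax^* + b = p^*$ and hence $J_0(x^*) = \hJ(x^*,p^*)$,
\[
  J_0(x^{\bar k}) - J_0(x^*)
  = \bigl[\hJ(x^{\bar k}, Ax^{\bar k}+b) - \hJ(x^{\bar k}, p^{\bar k})\bigr]
  + \bigl[\hJ(x^{\bar k}, p^{\bar k}) - \hJ(x^*, p^*)\bigr].
\]
Since $\hJ(x,p) - \hJ(x,p')  = \dist{p}{C} - \dist{p'}{C}$ and $\dist{\cdot}{C}$ is $1$-Lipschitz with respect to $\norm{\cdot}$, the first bracket is bounded by $\norm{Ax^{\bar k}+b-p^{\bar k}}$. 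Using the product-norm-to-Euclidean-norm bound $\norm{y} \le \sqrt{l}\,\tnorm{y}$ and then Lemma~\ref{consdes}, this yields
\[
  \hJ(x^{\bar k}, Ax^{\bar k}+b) - \hJ(x^{\bar k}, p^{\bar k})
  \le \sqrt{l}\,\tnorm{Ax^{\bar k}+b - p^{\bar k}}
  \le \sqrt{\tfrac{l\,C_2}{\bar k}},
\]
where $C_2 := \tnorm{A(x^0-x^*)}^2 + \mu^2\tnorm{u^0-u^*}^2$.

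Next I would handle the second bracket by summing the inequality in Lemma~\ref{fdes}. Dropping the negative constraint-residual term and telescoping from index $k-1$ to $2k-2$ (a window of $k$ consecutive bounds on $\hJ(x^{j},p^{j})-\hJ(x^*,p^*)$ for $j = k,\dots,2k-1$) gives
\[
  \sum_{j=k}^{2k-1} \bigl[\hJ(x^{j},p^{j}) - \hJ(x^*,p^*)\bigr]
  \le \tfrac{\mu}{2}\tnorm{u^{k-1}}^2 + \tfrac{1}{2\mu}\tnorm{A(x^*-x^{k-1})}^2.
\]
To turn the right-hand side into an absolute constant $C_1$, I would invoke Lemma~\ref{lem.ubounded}: the sequence $\om^k$ is monotonically nonincreasing, so $\mu\tnorm{u^k - u^*}^2$ and $\tfrac{1}{\mu}\tnorm{A(x^k - x^*)}^2$ are bounded by $\om^0$ for all $k$, and consequently $\tnorm{u^{k-1}}$ and $\tnorm{A(x^*-x^{k-1})}$ are uniformly bounded by constants depending only on $(x^0,u^0,x^*,u^*,\mu)$. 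By pigeonhole, there exists $\bar k \in \{k,k+1,\dots,2k-1\}$ such that
\[
  \hJ(x^{\bar k}, p^{\bar k}) - \hJ(x^*, p^*) \le \frac{C_1}{k}.
\]

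Combining the two bounds and using $\bar k \ge k$,
\[
  J_0(x^{\bar k}) - J_0(x^*) \le \sqrt{\tfrac{l\,C_2}{k}} + \tfrac{C_1}{k} = O\bigl(1/\sqrt{k}\bigr),
\]
so to guarantee $J_0(x^{\bar k}) - J_0(x^*) \le \veps$ it suffices to take $k = O(1/\veps^2)$, yielding the claimed iteration complexity. The main technical obstacle is the second bracket: the telescoping sum from Lemma~\ref{fdes} is only useful once one knows that the boundary terms on the right-hand side are uniformly controlled, which is why the window must be chosen to start at index $k-1$ (rather than $0$) and the monotonicity of $\om^k$ must be invoked to supply a uniform constant $C_1$ independent of $k$. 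The $1/\sqrt{k}$ rate on the first bracket is unavoidable here and is the reason the final complexity is $O(1/\veps^2)$ rather than $O(1/\veps)$.
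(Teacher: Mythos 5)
You have not proved the statement in question. The statement is Lemma~\ref{consdes}, the $O(1/k)$ bound on the combined residual $\tnorm{Ax^k+b-p^k}^2+\tnorm{A(x^k-x^{k-1})}^2$; your proposal instead proves the downstream complexity result (Theorem~\ref{thm.complexity.adal}/Lemma~\ref{ADALcomp}) and invokes Lemma~\ref{consdes} verbatim as an ingredient (``\ldots and then Lemma~\ref{consdes}, this yields\ldots''). As a proof of Lemma~\ref{consdes} this is circular: the one quantitative fact your argument needs about the feasibility residual is exactly the fact to be established, and nothing in your telescoping/pigeonhole machinery touches it --- that machinery controls the objective gap $\hJ(x^j,p^j)-\hJ(x^*,p^*)$, not $\tnorm{z^k}$. (As a proof of the complexity theorem your argument is essentially the paper's proof of Lemma~\ref{ADALcomp}, but that was not the assignment.)

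What is actually missing is the following. Summing the descent inequality \eqref{eq.phibd} of Lemma~\ref{lem.ubounded} gives only summability, $\sum_{j}\bigl(\tnorm{z^{j+1}}^2+\tnorm{q^j}^2\bigr)\le\mu\,\om^1\le\tnorm{A(x^0-x^*)}^2+\mu^2\tnorm{u^0-u^*}^2$, which by itself yields no pointwise rate for the $k$-th term. The content of Lemma~\ref{consdes} is that the per-iteration residual $\tnorm{z^k}^2+\tnorm{q^{k-1}}^2$ is monotonically nonincreasing along ADAL, so that $k$ times the $k$-th term is dominated by the sum, giving the $1/k$ bound; proving that monotonicity requires its own variational argument comparing consecutive iterates (this is the substance of \cite[Theorem 2]{OADM}). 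The paper does not reprove this either --- it handles Lemma~\ref{consdes} purely by citation, supplying the dictionary $(x,p)\leftrightarrow(z,x)$, $f=\dist{\cdot}{C}$, $g=\varphi$, $B=A$, $A=-I$, $c=-b$ and checking that both functions are closed, proper, and convex so the hypotheses of \cite{OADM} apply. A correct blind proof should therefore either carry out that instantiation-and-verification, or reprove the residual monotonicity from the optimality conditions of the two subproblems; your proposal does neither.
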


\begin{rem}
  To see how the previous two lemmas follow from the stated results in \cite{OADM}, the table below provides a guide for translating between our notation and that of \cite{OADM}, which considers the problem
  \begin{equation}\label{OADM}
    \min_{x,z}\ f(x)+g(z)\ \mbox{subject to}\ Ax+Bz=c. 
  \end{equation}
  \[
    \begin{array}{|c|c|}
    \hline
    \mbox{Problem \eqref{Jhat}}&\mbox{Problem \eqref{OADM}}\\
    \hline
    (x,p) & (z,x)\\ \hline
    \varphi&g\\
    \hline
    \dist{\cdot }{C}&f\\
    \hline
    A & B \\
    \hline
    -I & A \\
    \hline
    -b&c\\
    \hline
    \end{array}
  \]
  For the results corresponding to our Lemmas \ref{fdes} and \ref{consdes}, \cite{OADM} requires $f$ and $g$ in \eqref{OADM} to be closed, proper, and convex functions.  In our case, the corresponding functions $\dist{\cdot}{C}$ and $\varphi$ satisfy these assumptions.
\end{rem}

By Lemma~\ref{lem.ubounded}, the sequence $\{\om^k\}$ is monotonically decreasing, meaning that $\{\|Ax^k-Ax^*\|_2^2\}$ and $\{\|u^k\|_2^2\}$ are bounded by some $\tau_1 > 0$ and $\tau_2>0$, respectively.  The proof of Theorem \ref{thm.complexity.adal} now follows as a consequence of the following lemma.

\begin{lemma}\label{ADALcomp}
  Suppose that the sequence $\{(x^k, p^k, u^k)\}$ is generated by ADAL with initial point $x^0 \in X$, and, under Assumption~\ref{adal assumption}, let $(x^*, p^*, u^*)$ be the optimal solution of (\ref{Jhat}).  Moreover, let $\bar k \in K := \{k, k+1, \dots, 2k - 1\}$ be such that $\hJ(x^{\bar k}, p^{\bar k}) = \min_{k\in K} \hJ(x^k, p^k)$.  Then,
 \[
   J_0(x^{\bar k}) - J_0(x^*) \leq \sqrt{ \frac{l(\tnorm{A(x^0-x^*)}^2 + \mu^2\tnorm{u^0-u^*}^2)}{k}} + \frac{\mu \tau_2 + \tau_1/\mu}{k} .
 \]
\end{lemma}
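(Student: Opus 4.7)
The plan is to split the gap $J_0(x^{\bar k}) - J_0(x^*)$ into two pieces: an "objective gap" term $\hJ(x^{\bar k}, p^{\bar k}) - \hJ(x^*, p^*)$ handled by Lemma~\ref{fdes}, and a "feasibility gap" term coming from the mismatch between $p^{\bar k}$ and $Ax^{\bar k}+b$ handled by Lemma~\ref{consdes}.  Since $p^* = Ax^* + b$, we have $J_0(x^*) = \hJ(x^*,p^*)$, while $J_0(x^{\bar k}) = \hJ(x^{\bar k}, Ax^{\bar k}+b)$.  The distance function $\dist{\cdot}{C}$ is $1$-Lipschitz with respect to the norm \eqref{new norm}, so
\[
J_0(x^{\bar k}) - \hJ(x^{\bar k},p^{\bar k}) \leq \norm{Ax^{\bar k}+b - p^{\bar k}} = \sum_{i\in\cI}\tnorm{z_i^{\bar k}} \leq \sqrt{l}\,\tnorm{z^{\bar k}},
\]
by Cauchy--Schwarz, where the last norm is the Euclidean norm on $\R^m$.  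Thus
\[
J_0(x^{\bar k}) - J_0(x^*) \leq \bigl[\hJ(x^{\bar k},p^{\bar k}) - \hJ(x^*,p^*)\bigr] + \sqrt{l}\,\tnorm{z^{\bar k}}.
\]

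For the second term, apply Lemma~\ref{consdes} at index $\bar k \ge k$ to get
\[
\tnorm{z^{\bar k}}^2 = \tnorm{Ax^{\bar k}+b-p^{\bar k}}^2 \le \tfrac{1}{\bar k}\bigl[\tnorm{A(x^0-x^*)}^2 + \mu^2\tnorm{u^0-u^*}^2\bigr] \le \tfrac{1}{k}\bigl[\tnorm{A(x^0-x^*)}^2+\mu^2\tnorm{u^0-u^*}^2\bigr],
\]
which after multiplying by $\sqrt{l}$ yields exactly the first (square-root) term in the stated bound.

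For the first term, sum the inequality of Lemma~\ref{fdes} from $j=k-1$ to $j=2k-2$.  The terms $\tfrac{\mu}{2}(\tnorm{u^j}^2 - \tnorm{u^{j+1}}^2)$ and $\tfrac{1}{2\mu}(\tnorm{A(x^*-x^j)}^2 - \tnorm{A(x^*-x^{j+1})}^2)$ telescope, and the term $-\tfrac{1}{2\mu}\tnorm{Ax^j+b-p^{j+1}}^2$ is non-positive and can be dropped.  Reindexing the sum on the left and invoking the uniform bounds $\tnorm{u^{k-1}}^2 \le \tau_2$ and $\tnorm{A(x^{k-1}-x^*)}^2 \le \tau_1$ gives
\[
\sum_{j\in K}\bigl[\hJ(x^j,p^j)-\hJ(x^*,p^*)\bigr] \le \tfrac{\mu\tau_2}{2} + \tfrac{\tau_1}{2\mu}.
\]
Since $\bar k$ is chosen to minimize $\hJ(x^j,p^j)$ over the $k$ indices in $K$, the left-hand side is at least $k\,[\hJ(x^{\bar k},p^{\bar k})-\hJ(x^*,p^*)]$, yielding the $O(1/k)$ bound on the objective gap (and matching the stated rate up to an absolute constant).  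Adding the two contributions gives the claimed inequality.

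The step I expect to require the most care is the passage from $\hJ$ to $J_0$: one must use both the $1$-Lipschitz property of $\dist{\cdot}{C}$ under the norm \eqref{new norm} and the equivalence $\norm{y}\le\sqrt{l}\,\tnorm{y}$ between that norm and the Euclidean norm used in Lemma~\ref{consdes}.  The telescoping step itself is routine, but one must be careful with the index shift between $j$ and $j{+}1$ in Lemma~\ref{fdes} so that the summed indices coincide with $K = \{k,\dots,2k-1\}$.
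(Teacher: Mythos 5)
Your proposal is correct and follows essentially the same route as the paper's proof: telescope Lemma~\ref{fdes} over $j=k-1,\dots,2k-2$ and use the minimizing index $\bar k$ to bound $\hJ(x^{\bar k},p^{\bar k})-\hJ(x^*,p^*)$ by $O(1/k)$, then control $J_0(x^{\bar k})-\hJ(x^{\bar k},p^{\bar k})$ via the $1$-Lipschitz property of $\dist{\cdot}{C}$ and the $\norm{y}\le\sqrt{l}\,\tnorm{y}$ comparison together with Lemma~\ref{consdes}. Your retained factor of $\tfrac12$ in the telescoped term only tightens the paper's bound, so the stated inequality follows.
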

\begin{proof}
  Summing the inequality in Lemma \ref{fdes} for $j = k-1,\dots,2(k - 1)$ yields
  \begin{align}
        &\ \left(\sum_{j = k-1}^{2k- 2} \hJ(x^{j+1}, p^{j+1})\right) - k \hJ(x^*, p^*) \nonumber \\
    \leq&\ \tfrac{\mu}{2}(\tnorm{u^{k-1}}^2 - \tnorm{u^{2k- 1}}^2) + \tfrac{1}{2\mu}(\tnorm{Ax^* - Ax^{k-1}}^2 - \tnorm{Ax^* - Ax^{2k-1}}^2) \nonumber \\
    \leq&\ \mu \tau_2 + \tau_1/\mu. \label{com1}
  \end{align}
  Therefore,
  \begin{align}
    \hJ(x^{\bar k},p^{\bar k} ) - \hJ(x^*, p^*)
      & = \min_{k \leq j \leq 2k - 1}\hJ(x^j, p^j) - \hJ(x^*, p^*) \nonumber \\
      & \leq \tfrac{1}{k} \sum_{j = k-1}^{2k- 2} \hJ(x^{j+1}, p^{j+1}) - \hJ(x^*, p^*) \nonumber \\
      & \leq \tfrac{1}{k} (\mu \tau_2 + \tau_1/\mu), \label{com2}
  \end{align}
  where the last inequality follows from \eqref{com1}.

  Next, observe that for any $x \in \Rn$ and $p$, we have
  \begin{align}
    J_0(x) - \hJ(x, p) 
         =&\ \varphi(x) + \dist{Ax + b}{C} -  (\varphi(x) + \dist{p}{C}) \nonumber \\
         =&\ \dist{Ax + b}{C} - \dist{p}{C} \nonumber \\
      \leq&\ \norm{Ax + b - p} \nonumber \\
         =&\ \sum_{i\in\cI} \tnorm{A_i x + b_i - p_i} \nonumber \\
      \leq&\ \sqrt{l} \tnorm{Ax + b - p}, \label{eq.fact2}
  \end{align}
  where the first inequality follows since $| \dist{z}{C} - \dist{w}{C}|\le\Vert z-w\Vert$, and the second follows by Jensen's inequality.  Combining \eqref{com2} and \eqref{eq.fact2} gives
  \[
    \begin{aligned}
      J_0(x^{\bar k}) - J_0(x^*)
           =&\ J_0(x^{\bar k}) - \hJ(x^*, p^*)\\
           =&\ J_0(x^{\bar k}) - \hJ(x^{\bar k}, p^{\bar k}) + \hJ(x^{\bar k}, p^{\bar k}) - \hJ(x^*, p^*) \\
        \leq&\ \sqrt{l} \tnorm{Ax^{\bar k} + b - p^{\bar k}} + \frac{\mu \tau_2 + \tau_1/\mu}{k} \\
        \leq&\ \sqrt{\frac{l(\tnorm{A(x^0-x^*)}^2 + \mu^2\tnorm{u^0-u^*}^2)}{ k} } + \frac{\mu \tau_2 + \tau_1/\mu}{k},
   \end{aligned}
  \]
  where the second inequality follows by Lemma~\ref{consdes} and the fact that $\bar k \ge k$.
\end{proof}

\section{Nesterov Acceleration}

In order to improve the performance of both IRWA and ADAL, one can use an acceleration technique due to Nesterov \cite{Nest83}.  For the ADAL algorithm, we have implemented the acceleration as described in \cite{GoldDonoSetz10}, and for the IRWA algorithm the details are given below.  We conjecture that each accelerated algorithm requires $O(1/\veps)$ iterations to produce an $\veps$-optimal solution to \eqref{s-dist}, but this remains an open issue.
\medskip

\underline{\bf{IRWA with Nesterov Acceleration}}
\begin{enumerate}
  \item[Step 0:] (Initialization) Choose an initial point $x^0\in X$, an initial relaxation vector $\eps^0 \in\bR^l_{++}$, and scaling parameters $\eta\in(0,1)$, $\gamma >0$, and $M>0$.  Let $\sigma \geq 0$ and $\sigma' \geq 0$ be two scalars which serve as termination tolerances for the stepsize and relaxation parameter, respectively.  Set $k:=0$, $y^0 := x^0$, and $t_1 := 1$.
  \item[Step 1:] (Solve the re-weighted subproblem for $x^{k+1}$)\\
  \noindent
  Compute a solution $x^{k+1}$ to the problem
  \begin{equation*}
    \cG{(y^k,\eps^k)}:\quad \min_{x\in X} \hat G_{(y^k,\eps^k)}(x).
  \end{equation*}
  Let 
  \begin{align*}
     t_{k+1} & := \tfrac{1 + \sqrt{1 + 4(t^k)^2}}{2} \\
     \mbox{and}\quad 
     y^{k+1} & :=  x^{k+1} + \tfrac{t^k - 1}{t_{k+1}}( x^{k+1} -  x^k).
  \end{align*}
  \item[Step 2:] (Set the new relaxation vector $\eps^{k+1}$)\\
  \noindent
  Set 
  \[
    \tilde q_i^k:=A_i (x^{k+1} - y^k)
    \quad\mbox{and}\quad
    \tilde r_i^k:=(I-\Pri)(A_i y^k + b_i)\quad \forall\, i\in\cI_0. 
  \]
  If  
  \begin{equation*}
    \tnorm{\tilde q_i^k} \leq  M\Big[ \tnorm{\tilde r_i^k}^2 + (\eps_i^k)^2\Big]^{\frac{1}{2}+\gamma} \quad \forall\, i \in \cI, 
  \end{equation*}
  then choose $\eps^{k+1} \in(0,\ \eta \eps^k]$; else, set $\eps^{k+1} := \eps^k$.  If $J(y^{k+1}, \eps^{k+1}) > J(x^{k+1}, \eps^{k+1})$, then set $y^{k+1} := x^{k+1}$.
  \item[Step 3:] (Check stopping criteria)\\
  \noindent
  If $\norm{x^{k+1} - x^k}_2 \leq \sigma$ and $\norm{\eps^{k}}_2 \leq \sigma'$, then stop; else, set $k := k+1$ and go to Step 1. 
\end{enumerate}
\medskip

In this algorithm, the intermediate variable sequence $\{y^k\}$ is included.  If $y^{k+1}$ yields an objective function value worse than $x^{k+1}$, then we re-set $y^{k+1}:=x^{k+1}$.  This modification preserves the global convergence properties of the original version since
\begin{align}
      &\ J(x^{k+1}, \eps^{k+1}) - J(x^{k}, \eps^{k}) \nonumber \\
     =&\ J(x^{k+1}, \eps^{k+1}) - J(y^{k}, \eps^{k}) + J(y^{k}, \eps^{k})  - J(x^{k}, \eps^{k}) \nonumber \\
  \leq&\ J(x^{k+1}, \eps^{k}) - J(y^{k}, \eps^{k}) \nonumber \\
  \leq&\ -\tfrac{1}{2}(x^{k+1} - y^{k})^T\tilde A^T W_k\tilde A(x^{k+1} - y^{k}) \label{e2} \\
     =&\ -\tfrac{1}{2}(\tilde q^k)^T W_k \tilde q^k,\nonumber
 \end{align}
 where the inequality \eqref{e2} follows from Lemma \ref{Gfun}.  Hence, $\frac{1}{2}(\tilde q^k)^T W_k \tilde q^k$ is summable, as was required for Lemma \ref{irwa properties} and Theorem \ref{main}. 

\section{Application to Systems of Equations and Inequalities}

In this section, we discuss how to apply the general results from \S\ref{sec.irwa} and \S\ref{sec.adal} to the particular case when $H$ is positive definite and the system $Ax+b\in C$ corresponds a system of equations and inequalities.  Specifically, we take $l=m$, $X=\Rn$, $C_i=\{0\}$ for $i\in\{1,\dots,s\}$, and $C_i=\R_-$ for $i\in\{s+1,\dots,m\}$ so that $C:=\{0\}^s\times\R_-^{m-s}$ and
\begin{align}
  J_0(x)
    &= \varphi(x)+\odist{Ax+b}{C} \nonumber \\
    &=\varphi(x)+\sum_{i=1}^{s}|A_ix+b_i| + \sum_{i=s+1}^m(A_ix+b_i)_+. \label{experimental J}
\end{align}
The numerical performance of both IRWA and ADAL on problems of this type will be compared in the following section.  For each algorithm, we examine performance relative to a stopping criteria, based on percent reduction in the initial duality gap.  It is straightforward to show that, since $H$ is positive definite, the Fenchel-Rockafellar dual \cite[Theorem 31.2]{RTR} to \eqref{s-dist} is
\begin{equation}\label{dual problem}
  \begin{array}{ll}
    \displaystyle{\mathop{\mathrm{minimize}}_u}&\ \half(g+A^Tu)^TH^{-1}(g+A^Tu)-b^Tu+\sum_{i\in\cI}\support{u_i}{C_i} \\
    \mbox{subject to}&\ u_i\in\bB_2\ \forall\, i\in\cI,
  \end{array}
\end{equation}
which in the case of \eqref{experimental J} reduces to
\[
  \begin{array}{ll}
    \displaystyle{\mathop{\mathrm{minimize}}_u}&\ \half(g+A^Tu)^TH^{-1}(g+A^Tu)-b^Tu\\
    \mbox{subject to}&\ -1\le u_i\le 1,\ i=1,\dots,s\\
                     &\, \quad 0\le u_i\le1,\ i=s+1,\dots,m.
  \end{array}
\]

In the case of linear systems of equations and inequalities, IRWA can be modified to improve the numerical stability of the algorithm.  Observe that if both of the sequences $|r_i^k|$ and $\eps_i^k$ are driven to zero, then the corresponding weight $w_i^k$ diverges to $+\infty$, which may slow convergence by unnecessarily introducing numerical instability.  Hence, we propose a modification that addresses those iterations and indices $i\in\{s+1,\dots,m\}$ for which $(A_ix^k+b_i)_-<0$, i.e., those inequality constraint indices corresponding inequality constraints that are strictly satisfied (inactive).  For such indices, it is not necessary to set $\eps_i^{k+1}<\eps_i^k$.  There are many possible approaches to address this issue, one of which is given in the algorithm given below.
\medskip

\underline{\bf{IRWA for Systems of Equations and Inequalities}}
\begin{enumerate}
  \item[Step 0:] (Initialization)
  Choose an initial point $x^0\in X$, initial relaxation vectors $\heps^0=\eps^0 \in\bR^l_{++}$, and scaling parameters $\eta\in(0,1)$, $\gamma >0$, and $M>0$.  Let $\sigma \geq 0$ and $\sigma' \geq 0$ be two scalars which serve as termination tolerances for the stepsize and relaxation parameter, respectively.  Set $k:=0$.
  \item[Step 1:] (Solve the re-weighted subproblem for $x^{k+1}$)\\
  \noindent
  Compute a solution $x^{k+1}$ to the problem
  \[
    \cG{(x^k,\eps^k)}:\quad \min_{x\in X} \hat G_{(x^k,\eps^k)}(x).
  \]
  \item[Step 2:] (Set the new relaxation vector $\eps^{k+1}$)\\
  \noindent
  Set 
  \[
    q_i^k:=A_i (x^{k+1} - x^k)
    \quad\mbox{and}\quad
    r_i^k:=(I-\Pri)(A_i x^k + b_i)\quad \forall\, i=0,\dots,m. 
  \]
  If
  \begin{equation}\label{step 2b inequality}
    \tnorm{q_i^k} \leq  M\Big[ \tnorm{r_i^k}^2 + (\eps_i^k)^2\Big]^{\frac{1}{2}+\gamma}\quad \forall\, i = 1, \dots, m, 
  \end{equation}
  then choose $\heps^{k+1} \in(0,\ \eta \heps^k]$ and, for $i=1,\dots, m$, set 
  \[
    \eps_i^{k+1} := \begin{cases} \heps_i^{k+1}&\mbox{, $i=1,\dots,s$,} \\ \eps_i^k&\mbox{, $i>s$ and $(A_ix^k+b_i)_-\le -\heps_i^k$,} \\ \heps_i^{k+1}&\mbox{, otherwise.} \end{cases}
  \]
  Otherwise, if \eqref{step 2b inequality} is not satisfied, then set $\heps^{k+1}:=\heps^{k}$ and $\eps^{k+1}:=\eps^{k}$.
  \item[Step 3:] (Check stopping criteria)\\
  \noindent
  If $\norm{x^{k+1} - x^k}_2 \leq \sigma$ and $\norm{\heps^{k}}_2 \leq \sigma'$, then stop; else, set $k := k+1$ and go to Step 1.
\end{enumerate}
\medskip

\begin{rem}
  In Step 2 of the algorithm above, the updating scheme for $\eps$ can be modified in a variety of ways. For example, one can also take $\eps_i^{k+1}:=\eps_i^{k}$ when $i>s$ and $(A_ix^k+b_i)_-<0$.
\end{rem}

This algorithm yields the following version of Lemma \ref{irwa properties}.

\begin{lemma}\label{irwa properties 2}
  Suppose that the sequence $\{(x^k,\eps^k)\}$ is generated by IRWA for Systems of Equations and Inequalities with initial point $x^0 \in X$ and relaxation vector $\epsilon^0 \in \bR^l_{++}$, and, for $k\in\bN$, let $q_i^k$ and $r_i^k$ for $i\in\cI_0$ be as defined in Step 2 of the algorithm with 
  \[
    q^k:=((q_0^k)^T,\dots,(q_l^k)^T)^T
    \quad\mbox{and}\quad
    r^k:=((r_0^k)^T,\dots,(r_l^k)^T)^T.
  \]
  Moreover, for $k \in \bN$, define 
  \[
    w^k_i:=w_i(x^k,\eps^k)\ \mbox{for}\ i\in\cI_0
    \quad\mbox{and}\quad
    W_k:=W(x^k,\eps^k),
  \]
  and set $S:=\set{k}{\eps^{k+1}\le\eta\eps^k}$.  Then, the sequence $\{J(x^{k}, \eps^k)\}$ is monotonically decreasing.  Moreover, either $\inf_{k\in\bN} J(x^k,\eps^k) = -\infty$, in which case $\inf_{x\in X} J_0(x) = -\infty$, or the following hold:
  \begin{enumerate}
    \item $\sum_{k=0}^\infty (q^k)^TW_kq^k< \infty$.
    \item $\heps^k\rightarrow 0$ and $H(x^{k+1}-x^k)\to 0$.
    \item $W_kq^k\overset{S}{\rightarrow}0$.
    \item $w_i^kr_i^k=r^k_i/\sqrt{\tnorm{r^k_i}^2+\eps^k_i}\in\bB_2\cap\ncone{\Pri(A_ix^k+b_i)}{C_i},\, i\in\cI,\, k\in\bN$.
    \item $-\tA^TW_kq^k\in (\nabla\varphi(x^k)+\sum_{i\in\cI}A_i^Tw_i^kr_i^k) + \ncone{x^{k+1}}{X},\, k\in\bN$.
    \item If $\{\dist{Ax^k+b}{C}\}_{k\in S}$ is bounded, then $q^k\overset{S}{\rightarrow}0$. 
  \end{enumerate}
\end{lemma}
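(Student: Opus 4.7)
My plan is to argue that the proof is a near-verbatim adaptation of the proof of Lemma \ref{irwa properties}, with the only substantive adjustment being that $\heps^k$ (rather than $\eps^k$) now plays the role of the relaxation vector being driven to zero, while $\eps^k$ remains the vector that appears in the weights $w_i^k$ and in the test \eqref{step 2b inequality}. I would first establish by induction that $\heps_i^k\le\eps_i^k$ for all $i,k$, with equality for $i\le s$, and that $\eps^{k+1}\le\eps^k$ componentwise. Both claims follow directly from the three branches of the $\eps$-update: for $i\le s$ one has $\eps_i^{k+1}=\heps_i^{k+1}\le\eta\eps_i^k$; for $i>s$ the inactive branch gives $\eps_i^{k+1}=\eps_i^k$, while the remaining branch gives $\eps_i^{k+1}=\heps_i^{k+1}\le\eta\heps_i^k\le\eps_i^k$. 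Once componentwise monotonicity of $\eps^k$ is in place, Lemma \ref{Gfun} (applied with $\tx=x^k$, $\teps=\eps^k$, and new relaxation $\eps^{k+1}\in(0,\eps^k]$) delivers the descent estimate
\[
J(x^k,\eps^k)-J(x^{k+1},\eps^{k+1})\ge \tfrac{1}{2}(q^k)^T W_k q^k,
\]
which immediately yields monotonicity of $\{J(x^k,\eps^k)\}$ and, upon summation, Part (1). Since $w_0^k\equiv 1$ and $H=A_0^T A_0$, Part (1) in turn gives $H(x^{k+1}-x^k)\to 0$.

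The genuinely new ingredient is the argument that $\heps^k\to 0$. I would proceed by contradiction: if $\heps^k\not\to 0$, then, since each update of $\heps$ either leaves it unchanged or shrinks it by at least the factor $\eta\in(0,1)$, the sequence must stabilize past some index $\bk$, with $\heps^k\equiv\heps^{\bk}>0$; because $\eps$ is updated on the same branch, the vector $\eps^k$ also stabilizes at some $\eps^{\bk}>0$ for all $k\ge\bk$, and the test \eqref{step 2b inequality} must fail for every such $k$. I would then invoke verbatim the partition argument from the proof of Lemma \ref{irwa properties}(2): Part (1) forces, along any subsequence, a further subsequence $\hS$ and a partition $\{\cI_1,\cI_2\}$ of $\cI$ with $\tnorm{q_i^k}\overset{\hS}{\to}0$ on $\cI_1$ and $w_i^k\tnorm{q_i^k}\overset{\hS}{\to}0$ on $\cI_2$; on $\cI_1$ the lower bound $[\tnorm{r_i^k}^2+(\eps_i^{\bk})^2]^{1/2+\gamma}\ge(\eps_i^{\bk})^{1+2\gamma}>0$ makes \eqref{step 2b inequality} hold eventually, while on $\cI_2$ the identity $w_i^k=(\tnorm{r_i^k}^2+(\eps_i^k)^2)^{-1/2}$ converts $w_i^k\tnorm{q_i^k}\overset{\hS}{\to}0$ into exactly the same bound. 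This contradicts the assumed failure of the test and establishes $\heps^k\to 0$.

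Parts (3)--(6) transfer with essentially no change. For Part (3), I would use the same sub-subsequencing device as in the original proof to promote the per-subsequence partition above to the global statement $W_k q^k\overset{S}{\to}0$. Part (4) is a component-by-component application of Theorem \ref{projection theorem} and is insensitive to the update rule. Part (5) comes from the first-order optimality conditions for the subproblem $\cG(x^k,\eps^k)$, which are formally identical to those in the original setting. Part (6) combines boundedness of $\{\dist{Ax^k+b}{C}\}_{k\in S}$ with Part (3) via another sub-subsequence argument that does not depend on the precise form of the update rule for $\eps$.

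I expect the main care point---rather than a serious obstacle---to be keeping the distinct roles of $\eps^k$ and $\heps^k$ consistently in mind throughout: $\eps^k$ is the vector that enters the weights and the test, and so governs descent via Lemma \ref{Gfun}, while $\heps^k$ is the vector that is guaranteed to decrease strictly whenever the test is satisfied and is therefore the one that can be driven to zero. Once the induction fixing the relation $\heps_i^k\le\eps_i^k$ and the componentwise monotonicity of $\eps^k$ is in place, every subsequent step of the original proof carries over essentially without modification.
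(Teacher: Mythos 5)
Your proposal is correct and follows essentially the same route as the paper: the paper's proof simply records the monotonicity relations $\heps^{k+1}\le\heps^k$ and $\heps^{k+1}\le\eps^{k+1}\le\eps^k$ and then declares that the proof of Lemma \ref{irwa properties} carries over verbatim with $\heps^k$ replacing $\eps^k$ in Part (2). You spell out the induction behind those relations and the contradiction argument for $\heps^k\to 0$ in more detail than the paper does, but the substance is identical.
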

\begin{proof}
  Note that Lemma~\ref{Gfun} still applies since it is only concerned with properties of the functions $\hG$ and $J$.  In addition, note that
  \[
    \heps^{k+1}\le\heps^{k}\quad\mbox{and}\quad\heps^{k+1}\le\eps^{k+1}\le\eps^{k}\quad \forall\, k\geq1.
  \]
  With these observations, the proof of this lemma follows in precisely the same way as that of Lemma \ref{irwa properties}, except that in Part (2) $\{\heps^k\}$ replaces $\{\eps^k\}$.
\end{proof}

With Lemma \ref{irwa properties 2}, it is straightforward to show that the convergence properties described in Theorems \ref{main} and \ref{thm:H pd implies cvg} also hold for the version of IRWA in this section.

\section{Numerical Comparison of IRWA and ADAL}

In this section, we compare the performance of our IRWA and ADAL algorithms in a set of three numerical experiments.  The first two experiments involves cases where $H$ is positive definite and the desired inclusion $Ax+b\in C$ corresponds to a system of equations and inequalities.  Hence, for these experiments, we employ the version of IRWA as described for such systems in the previous section.  In the first experiment, we fix the problem dimensions and compare the behavior of the two algorithms over $500$ randomly generated problems.  In the second experiment, we investigate how the methods behave when we scale up the problem size.  For this purpose, we compare performance over 20 randomly generate problems of increasing dimension.  The algorithms were implemented in Python using the NumPy and SciPy packages; in particular, we used the versions Python 2.7, Numpy 1.6.1, SciPy 0.12.0 \cite{scipy,scipy2}.  In both experiments, we examine performance relative to a stopping criteria based on percent reduction in the initial duality gap.  In IRWA, the variables $\tu^k:=W_kr^k$ are always dual feasible, i.e.,
\[
  \tu_i\in\bB_2\cap\dom{\support{\cdot}{C_i}}\quad \forall\, i\in\cI
\]
(recall Lemma \ref{irwa properties}(4)), and these variables constitute our $k$th estimate to the dual solution.  On the other hand, in ADAL, the variables $\hu^k=u^k-\frac{1}{\mu}q^k$ are always dual feasible (recall Lemma~\ref{lem:adal 1}), so these constitute our $k$th estimate to the dual solution for this algorithm.  The duality gap at any iteration is the sum of the primal and dual objectives at the current primal-dual iterates.

In both IRWA and ADAL, we solve the subproblems using CG, which is terminated when the $\ell_2$-norm of the residual is less than $10\%$ of the norm of the initial residual.  At each iteration, the CG algorithm is initiated at the previous step $x^{k-1}$.  In both experiments, we set $x^0:=0$, and in ADAL we set $u^0:=0$.  It is worthwhile to note that we have observed that the performance of IRWA is sensitive to the initial choice of $\eps^0$ while ADAL is sensitive to $\mu$.  We do not investigate this sensitivity in detail when presenting the results of our experiments, and we have no theoretical justification for our choices of these parameters.  However, we empirically observe that these values should increase with dimension.  For each method, we have chosen an automatic procedure for initializing these values that yields good overall performance.  The details are given in the experimental descriptions.  More principled methods for initializing and updating these parameters is the subject of future research. 

In the third experiment, we apply both algorithms to an $l_1$ support vector machine (SVM) problem.  Details are given in the experimental description. In this case, we use the stopping criteria as stated along with the algorithm descriptions in the paper, i.e., not criteria based on a percent reduction in duality gap.   In this experiment, the subproblems are solved as in the first two experiments with the same termination and warm-start rules.
\medskip

\noindent
{\bf First Experiment:} 
\begin{figure*}
  \begin{tabular}{cc}
    \includegraphics[scale=.5]{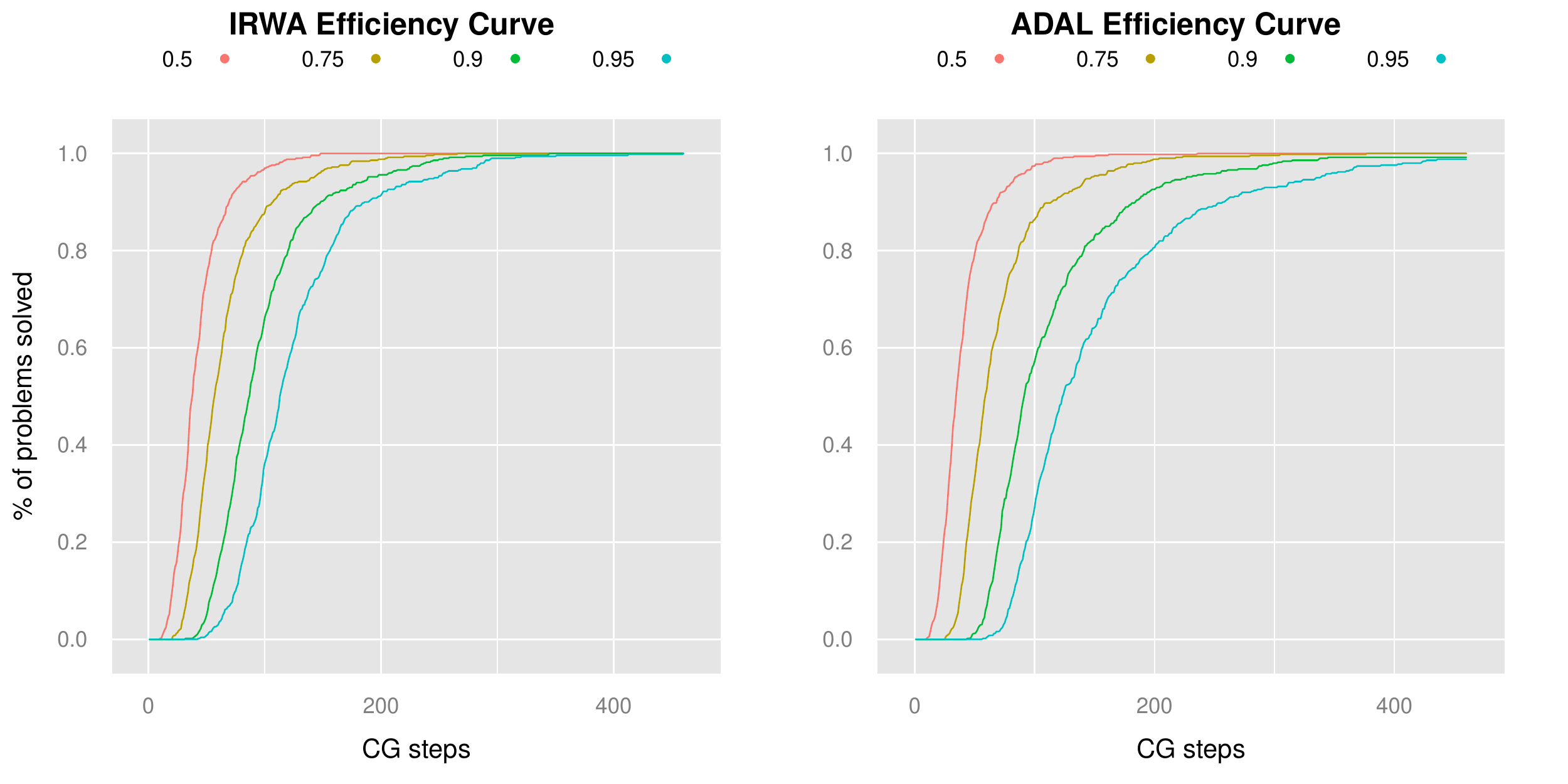}
  \end{tabular}
  \caption{Efficiency curves for IRWA (left panel) and ADAL (right panel).  The percentage of the 500 problems solved is plotted verses the total number of CG steps.  IRWA terminated in fewer than 460 CG steps on all problems.  ADAL required over 460 CG steps on 8 of the problems.} 
  \label{efficiency curves}
\end{figure*}
\begin{figure}[!ht]
  \begin{center}
    \includegraphics[scale = 0.4]{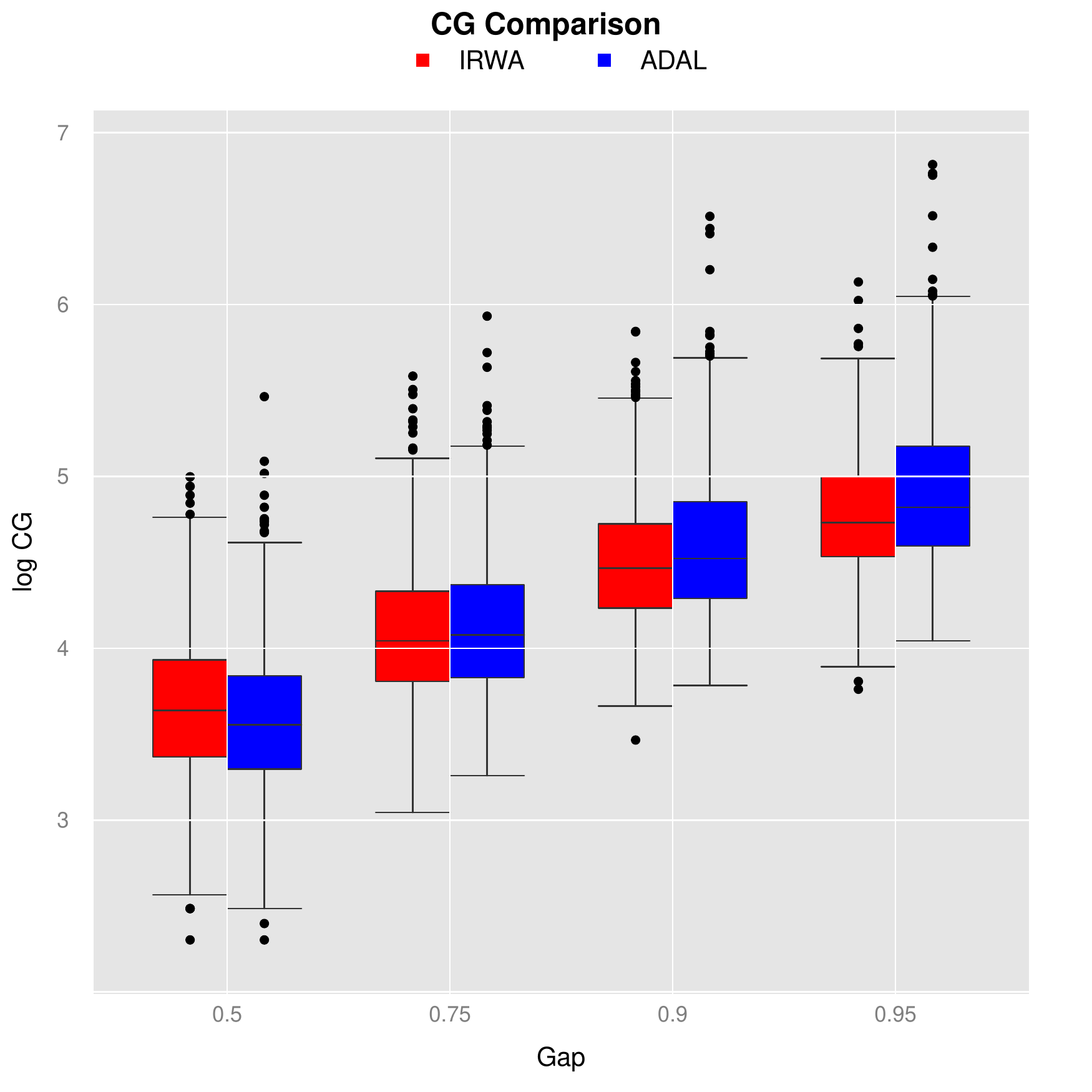}
  \end{center}
  \caption{Box plot of CG steps for IRWA (red) and ADAL (blue) for each duality gap threshold.}
  \label{fig:cgRegular}
\end{figure}
In this experiment, we randomly generated $500$ instances of problem \eqref{experimental J}.  For each, we generated $A \in \bR^{600 \times 1000}$ and chose $C$ so that the inclusion $Ax+b\in C$ corresponded to 300 equations and 300 inequalities.  Each matrix $A$ is obtained by first randomly choosing a mean and variance from the integers on the interval $[1,10]$ with equal probability.  Then the elements of $A$ are chosen from a normal distribution having this mean and variance.  Similarly, each of the vectors $b$ and $g$ are constructed by first randomly choosing integers on the intervals $[-100,100]$ for the mean and $[1,100]$ for the variance with equal probability and then obtaining the elements of these vectors from a normal distribution having this mean and variance.  Each matrix $H$ had the form $H=0.1 I +LL^T$ where the elements of $L\in\R^{n\times n}$ are chosen from a normal distribution having mean $1$ and variance $2$.  For the input parameters for the algorithms, we chose $\eta:=0.6$, $M := 10^4$, $\gamma := \frac{1}{6}$, $\mu:=100$, and $\eps^0_i := 2000$ for each $i \in \cI$.  Efficiency curves for both algorithms are given in Figure~\ref{efficiency curves}, which illustrates the percentage of problems solved verses the total number of CG steps required to reduce the duality gap by 50, 75, 90 and 95 percent.  The greatest number of CG steps required by IRWA was $460$ when reducing the duality gap by $95\%$.  ADAL stumbled at the $95\%$ level on 8 problems, requiring $609, 494, 628, 674, 866, 467, 563, 856, 676$ and $911$ CG steps for these problems.  Figure~\ref{fig:cgRegular} contains a box plot for the log of the number of CG iterations required by each algorithm for each of the selected accuracy levels.  Overall, in this experiment, the methods seem comparable with a slight advantage to IRWA in both the mean and variance of the number of required CG steps.
\medskip

\noindent
{\bf Second Experiment:}
\begin{figure*}
  \begin{tabular}{cc}
    {\includegraphics[scale=0.36]{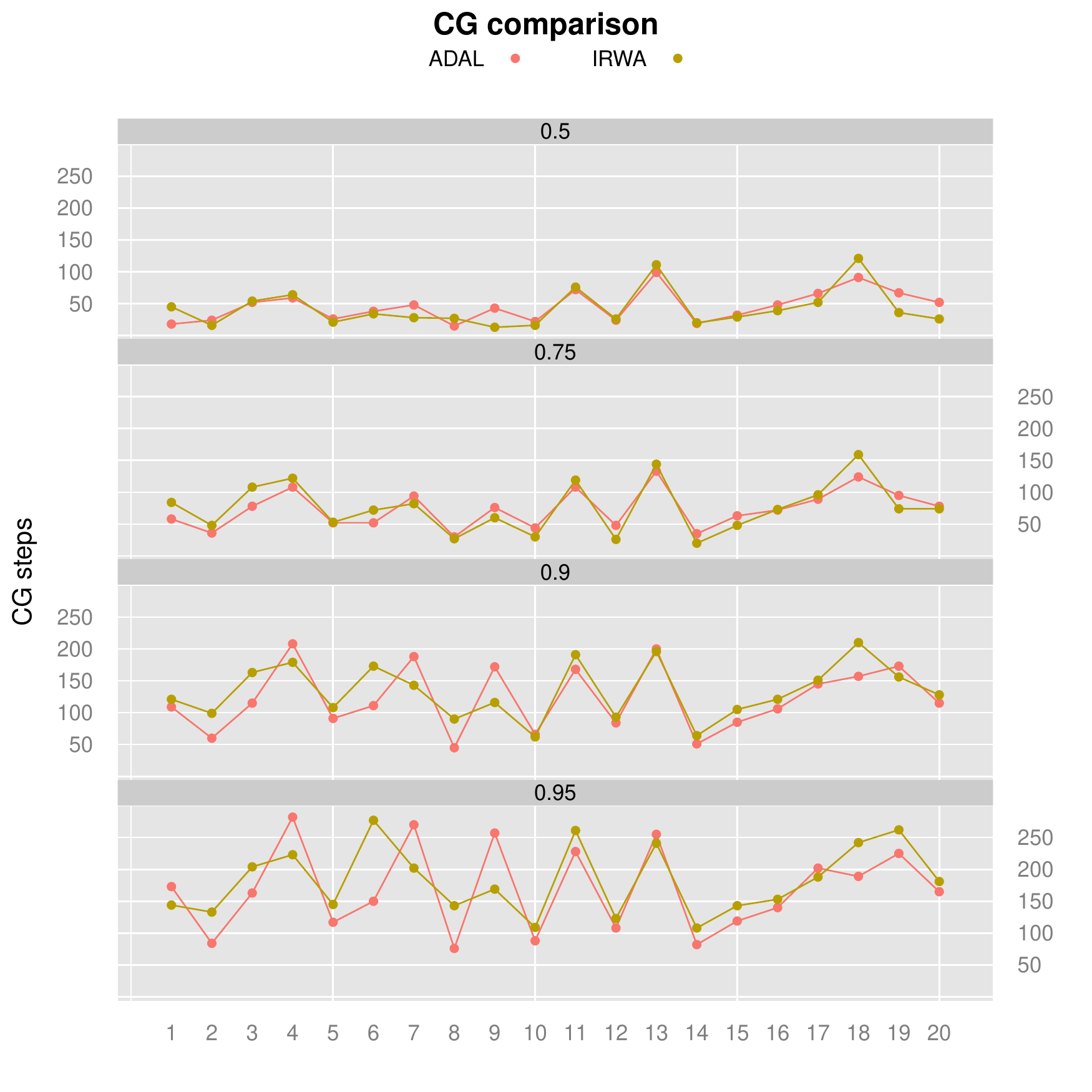}}
    {\includegraphics[scale=0.36]{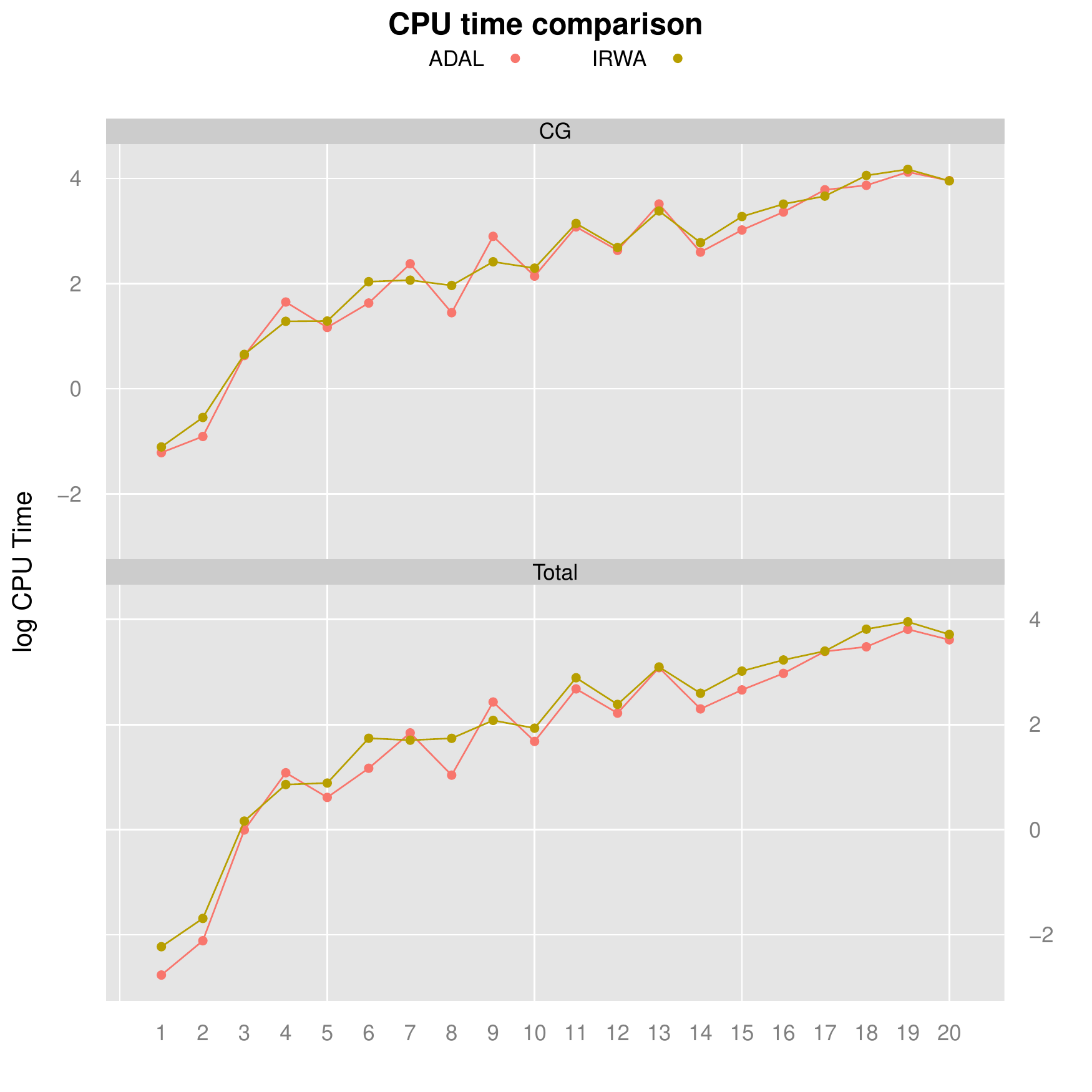}} 
  \end{tabular}
  \caption{Increase in CG steps for each duality gap (left panel) and CPU time (right panel) for IRWA and ADAL as dimension is increased.} 
  \label{cg and cpu}
\end{figure*}
In the second experiment, we randomly generated 20 problems of increasing dimention.  The numbers of variables were chosen to be $n=200+500(j-1),\ j=1,\dots,20$, where for each we set $m:=n/2$ so that the inclusion $Ax+b\in C$ corresponds to equal numbers of equations and inequalities.  The matrix $A$ was generated as in the first experiment.  Each of the vectors $b$ and $g$ were constructed by first choosing integers on the intervals $[-200,200]$ for the mean and $[1,200]$ for the variance with equal probability and then obtaining the elements of these vectors from a normal distribution having this mean and variance.  Each matrix $H$ had the form $H=40 I +LDL^T$, where $L\in\R^{n\times k}$ with $k=8$ and $D$ was diagonal.  The elements of $L$ were constructed in the same way as those of $A$, and those of $D$ were obtained by sampling from the inverse gamma distribution $f(x):=\frac{b^a}{\Gamma(a)} x^{-a - 1} e^{-b/x}$ with $a = 0.5,\ b = 1$.  We set $\eta:=0.5$, $M := 10^4$, and $\gamma := \frac{1}{6}$, and for each $j=1,\dots,20$ we set $\eps^0_i:=10^{2+1.3\ln(j+10)}$ for each $i=1,\dots,m$, and $\mu:=500(1+j)$.  In Figure~\ref{cg and cpu}, we present two plots showing the number of CG steps and the log of the CPU times versus variable dimensions for the two methods.  The plots illustrate that the algorithms performed similarly in this experiment.
\medskip

\noindent
{\bf Third Experiment:}
In this experiment, we solve the $l_1$-SVM problem as introduced in \cite{JZhu}.  In particular, we consider the exact penalty form
\begin{align}
  \min_{\beta \in \bR^n} \ \sum_{i = 1}^m \left(1 - y_i\left(\sum_{j = 1}^n x_{ij} \beta_j\right)\right)_+ + \lambda \norm{\beta}_1,
\end{align}
where $\{(\pmb{x_i}, y_i)\}_{i = 1}^m$ are the training data points with $\pmb{x}_i \in \bR^n$ and $y_i \in \{-1, 1\}$ for each $i = 1,\dots,m$, and $\lambda$ is the penalty parameter.  In this experiment, we randomly generated 40 problems in the following way.  First, we sampled an integer on $[1,5]$ and another on $[6,10]$, both from uniform distributions.  These integers were taken as the mean and standard deviation of a normal distribution, respectively.  We then generated an $m\times s$ component-wise normal random matrix $T$, where $s$ was chosen to be $19 + 2j,\ j = 0, 1, \dots, 39$ and $m$ was chosen to be $200 + 10j, \ j = 0, 1, \dots, 39$.  We then generated an $s$-dimensional integer vector $\hat \beta$ whose components were sampled from the uniform distribution on the integers between $-100$ and $100$.  Then, $y_i$ was chosen to be the sign of the $i$-th component of $T \hat\beta$.  In addition, we generated an $m\times t$ i.i.d.~standard normal random matrix $R$, where $t$ was chosen to be $200 + 30j, \ j = 0, 1, \dots, 39$.   Then, we let $[\pmb{x}_1, \pmb{x}_2, \dots, \pmb{x}_m]^T := [T, R]$.  For all 40 problems, we fixed the penalty parameter at $\lambda = 50$.  In this application, the problems need to be solved exactly, i.e., a percent reduction in duality gap is insufficient.  Hence, in this experiment, we use the stopping criteria as described in Step 3 of both IRWA and ADAL.  For IRWA, we set $\eps_i^0 := 10^4$ for all $i \in \cI$, $\eta := 0.7$, $M := 10^4$, $\gamma := \frac{1}{6}$, $\sigma := 10^{-4}$ and $\sigma' := 10^{-8}$.  For ADAL, we set $\mu := 1$, $\sigma := 0.05$ and $\sigma'' := 0.05$.  We also set the maximum iteration limit for ADAL to 150.  Both algorithms were initialized at $\beta := 0$.  Figure \ref{fig:svm1} has two plots showing the objective function values of both algorithms at termination, and the total CG steps taken by each algorithm.  These two plots show superior performance for IRWA when solving these 40 problems.

Based on how the problems were generated, we would expect the non-zero coefficients of the optimal solution $\beta$ to be among the first $s = 19 + 2j, \ j = 0, \dots, 39$ components corresponding to the matrix $T$.  To investigate this, we considered ``zero'' thresholds of $10^{-3}, 10^{-4}$ and $10^{-5}$; i.e., we considered a component as being ``equal'' to zero if its absolute value was less than a given threshold.  Figure \ref{fig:svm2} shows a summary of the number of unexpected zeros for each algorithm.  These plots show that IRWA has significantly fewer false positives for the nonzero components, and in this respect returned preferable sparse recovery results over ADAL in this experiment.

\begin{figure}[!ht]
 \begin{center}
  \includegraphics[scale = 0.4]{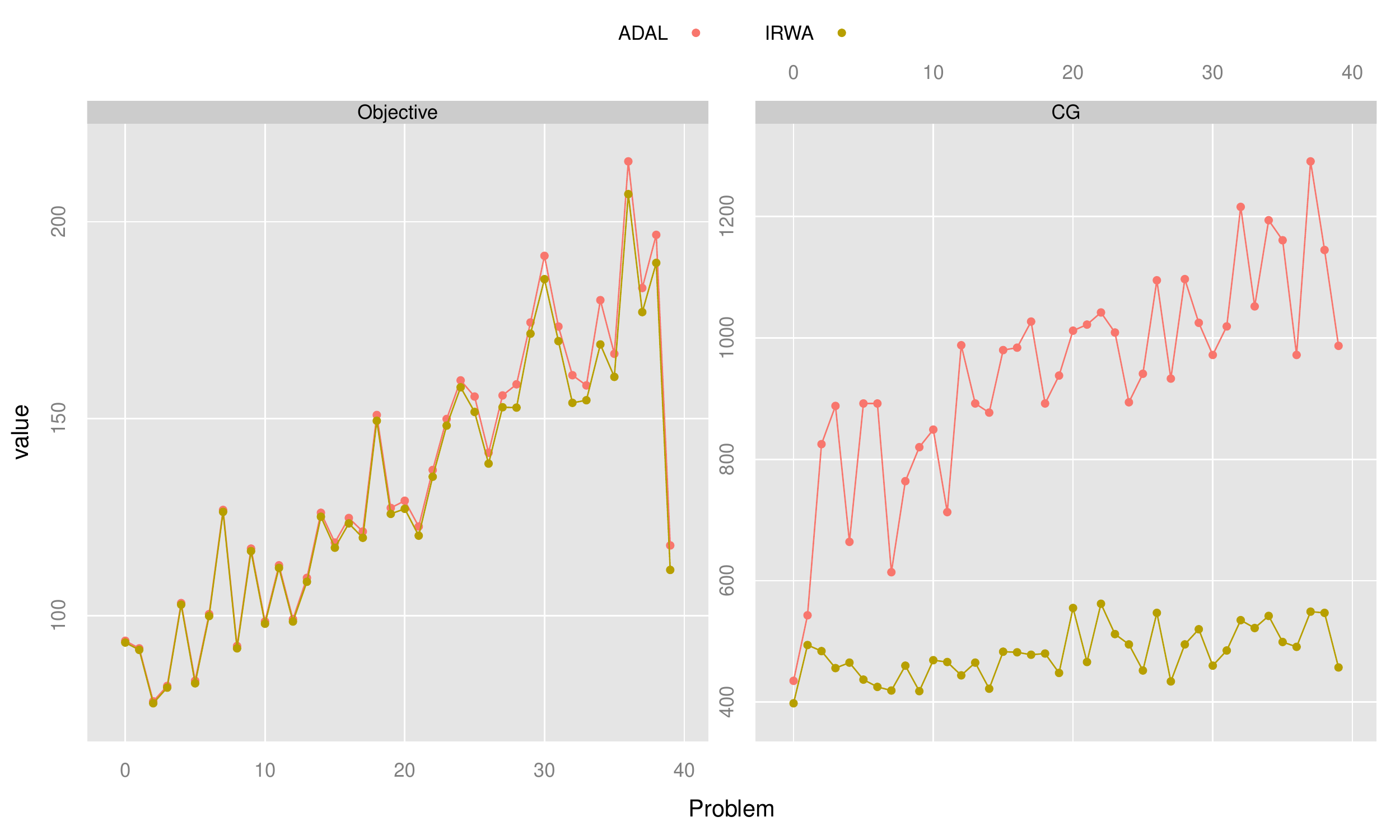}
 \end{center}
 \caption{In all 40 problems, IRWA obtains smaller objective function values with fewer CG steps.}
 \label{fig:svm1}
\end{figure}

\begin{figure}[!ht]
 \begin{center}
  \includegraphics[scale = 0.4]{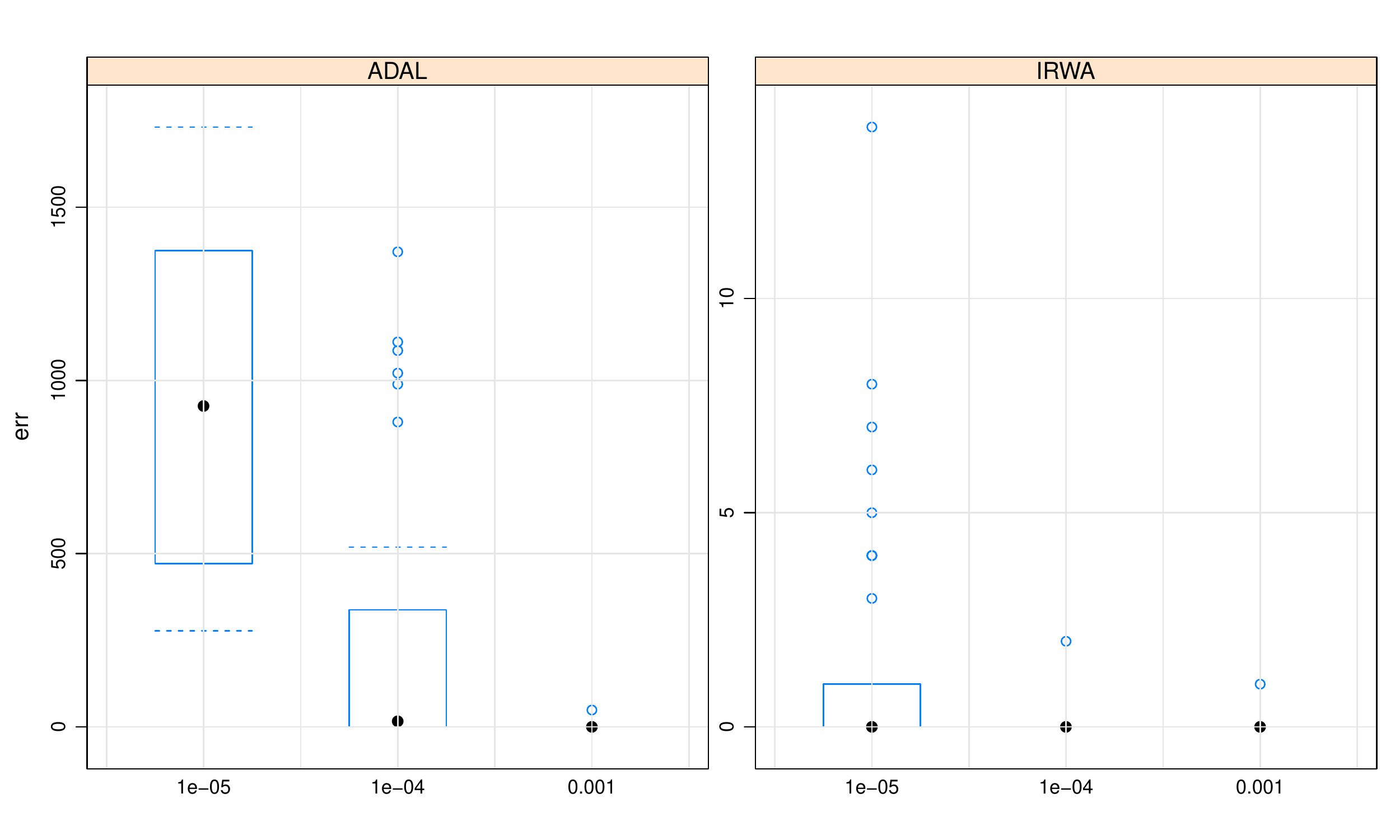}
 \end{center}
 \caption{For both thresholds $10^{-4}$ and $10^{-5}$, IRWA yields fewer false positives in terms of the numbers of ``zero'' values computed.  The numbers of false positives is similar for the threshold $10^{-3}$.  At the threshold $10^{-5}$, the difference in recovery is dramatic with IRWA always having fewer than 14 false positives while ADAL has a median of about $1000$ false positives.}
 \label{fig:svm2}
\end{figure}

Finally, we use this experiment to demonstrate Nesterov's acceleration for IRWA.  The effect on ADAL has already been shown in \cite{GoldDonoSetz10}, so we only focus on the effect of accelerating IRWA.  The 40 problems were solved using both IRWA and accelerated IRWA with the parameters stated above.  Figure~\ref{fig:effectNes} shows the differences between the objective function values ($\frac{\mbox{normal} - \mbox{accelerated}}{\mbox{accelerated}}\times 100$) and the number of CG steps (normal $-$ accelerated) needed to converge.  The graphs show that accelerated IRWA performs significantly better than unaccelerated IRWA in terms of both objective function values obtained and CG steps required.

\begin{figure}[!ht]
 \begin{center}
  \includegraphics[scale = 0.4]{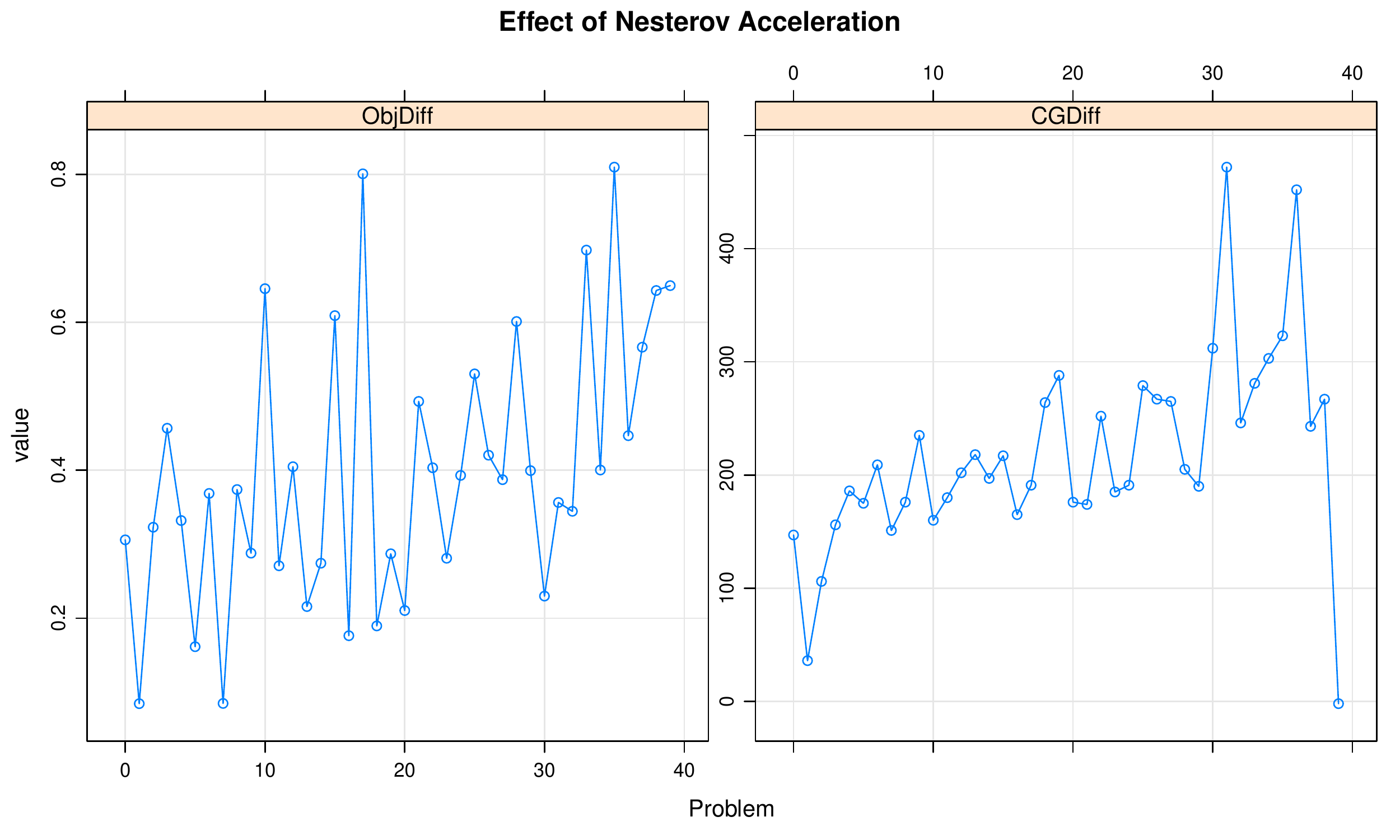}
 \end{center}
 \caption{Differences in objective function values (left panel) obtained by normal and accelerated IRWA ($\frac{\mbox{normal} - \mbox{accelerated}}{\mbox{accelerated}}\times 100$), and differences in numbers of CG steps (right panel) required to converge to the objective function value in the left panel ($\mbox{normal} - \mbox{accelerated}$).  Accelerated IRWA always converged to a point with a smaller objective function value, and accelerated IRWA typically required fewer CG steps.  (There was only one exception, the last problem, where accelerated IRWA required two more CG steps.)}
 \label{fig:effectNes}
\end{figure}

\section{Conclusion}

In this paper, we have proposed, analyzed, and tested two matrix-free solvers for approximately solving the exact penalty subproblem \eqref{s-dist}.  The primary novelty of our work is a newly proposed iterative re-weighting algorithm (IRWA) for solving such problems involving arbitrary convex sets of the form \eqref{prod C}.  In each iteration of our IRWA algorithm, a quadratic model of a relaxed problem is formed and solved to determine the next iterate.  Similarly, the alternating direction augmented Lagrangian (ADAL) algorithm that we present also has as its main computational component the minimization of a convex quadratic subproblem.  Both solvers can be applied in large scale settings, and both can be implemented matrix-free.

Variations of our algorithms were implemented and the performance of these implementations were tested.  Our test results indicate that both types of algorithms perform similarly on many test problems.  However, a test on an $\ell_1$-SVM problem illustrates that in some applications the IRWA algorithms can have superior performance.  While the accelerated version of both methods is the preferred implementation, we have provided global convergence and complexity results for unaccelerated variants of the algorithms.  Complexity results for accelerated versions remains an open issue.

\bibliographystyle{plain}
\bibliography{biblio}
\end{document}